\newcommand{\titel}{Location of the spectrum of Kronecker random matrices}
\title{\titel} 	
\author{
Johannes Alt\footnote{\hspace{0.15cm}Partially funded by ERC Advanced Grant RANMAT No. 338804.\newline Date: \today} 
\addtocounter{footnote}{-1}\addtocounter{Hfootnote}{-1}\\
{\small \begin{tabular}{c}{IST Austria}\\{johannes.alt@ist.ac.at} \end{tabular}} 
\and László Erd\H{o}s\footnotemark \addtocounter{footnote}{-1}\addtocounter{Hfootnote}{-1}\\
{\small \begin{tabular}{c} IST Austria\\ lerdos@ist.ac.at\end{tabular}} 
\and Torben Krüger\footnotemark \addtocounter{footnote}{-1}\addtocounter{Hfootnote}{-1}\\ 
{\small \begin{tabular}{c} IST Austria\\ torben.krueger@ist.ac.at\end{tabular}}
\and Yuriy Nemish\footnotemark\\
{\small \begin{tabular}{c} IST Austria\\ yuriy.nemish@ist.ac.at\end{tabular}}
}
\date{}
\numberwithin{equation}{section}
\newcommand{\R}{\mathbb{R}}  
\C\renewcommand{\C}{\mathbb{C}}\else\newcommand{\C}{\mathbb{C}}\fi 
\renewcommand{\Im}{\mathrm{Im}\,} 
\renewcommand{\Re}{\mathrm{Re}\,} 
\newcommand{\N}{\mathbb{N}}  
\newcommand{\E}{\mathbb{E}}  
\renewcommand{\P}{\mathbb{P}}  
\newcommand{\di}{\mathrm{d}} 
\newcommand{\eps}{\varepsilon} 
\newcommand*{\defeq}{\mathrel{\vcenter{\baselineskip0.5ex \lineskiplimit0pt\hbox{\scriptsize.}\hbox{\scriptsize.}}}=}
\newcommand{\pt}{\partial}
\DeclareMathOperator{\supp}{supp}
\DeclareMathOperator{\linspan}{span}
\newcommand{\cf}{\boldsymbol c}
\newcommand{\df}{\boldsymbol d}
\newcommand{\gf}{\boldsymbol g}
\newcommand{\yf}{\boldsymbol y}
\newcommand{\xf}{\boldsymbol x}
\newcommand{\af}{\boldsymbol a}
\newcommand{\mf}{\boldsymbol m}
\newcommand{\rf}{\ensuremath{\boldsymbol r}}
\newcommand{\Af}{\boldsymbol A}
\newcommand{\Bf}{\boldsymbol B}
\newcommand{\Df}{\boldsymbol D}
\newcommand{\Ff}{\boldsymbol F}
\newcommand{\Gf}{{\boldsymbol G}}
\newcommand{\Hf}{\boldsymbol H}
\newcommand{\Lf}{{\mathscr{L}}}
\newcommand{\Mf}{{\boldsymbol M}}
\newcommand{\Qf}{\boldsymbol Q}
\newcommand{\Rf}{{\boldsymbol R}}
\newcommand{\Sf}{{\mathscr{S}}}
\newcommand{\Tf}{{\boldsymbol T}}
\newcommand{\Uf}{{\boldsymbol U}}
\newcommand{\Vf}{\boldsymbol V}
\newcommand{\Wf}{{\boldsymbol W}}
\newcommand{\Xf}{{\boldsymbol X}}
\newcommand{\smin}{\sigma_{\mathrm{min}}}
\newcommand{\Hb}{\mathbb H}
\newcommand{\Hbin}{{\mathbb H}_{\mathrm{stab}}}
\newcommand{\Hboutone}{{\mathbb H}_{\mathrm{out}}^{(1)}}
\newcommand{\Hbouttwo}{{\mathbb H}_{\mathrm{out}}^{(2)}}
\newcommand{\id}{\mathds{1}}
\newcommand{\Id}{\mathrm{Id}}
\renewcommand{\char}{\ensuremath{\chi}} 
\newcommand{\bs}[1]{\boldsymbol{{#1}}} 
\renewcommand{\rm}{\mathrm} 
\newcommand{\mC}{\ensuremath{{\mathcal C}}}
\newcommand{\mF}{\ensuremath{{\boldsymbol{\mathcal F}}}}
\newcommand{\normtwo}[1]{\lVert #1 \rVert_{2}}
\newcommand{\normhs}[1]{\lVert #1 \rVert_{\mathrm{hs}}}
\newcommand{\normsp}[1]{\lVert #1 \rVert_{\mathrm{sp}}}
\newtheoremstyle{test}
  {}
  {}
  {\itshape}
  {}
  {\bfseries}
  {.}
  { }
  {}
\theoremstyle{test}
\newtheorem{defi}{Definition}[section]
\newtheorem{assums}[defi]{Assumptions}
\newtheorem{exam}[defi]{Example}
\newtheorem{rem}[defi]{Remark}
\newtheorem{thm}[defi]{Theorem}
\newtheorem{lem}[defi]{Lemma}
\newtheorem{coro}[defi]{Corollary}
\newtheorem{pro}[defi]{Proposition}
\newtheorem*{rem*}{Remark}   
\newtheorem*{ex*}{Example}   
\newtheorem*{pro*}{Proposition} 
\newtheorem*{def*}{Definition}
\newtheorem*{coro*}{Corollary}
\newtheorem*{thm*}{Theorem}
\theoremstyle{test}
    \newtheorem{theorem}[defi]{Theorem}
    \newtheorem{proposition}[defi]{Proposition}
    \newtheorem{corollary}[defi]{Corollary}
    \newtheorem{lemma}[defi]{Lemma}
    \newtheorem{definition}[defi]{Definition}
    \newtheorem{convention}[defi]{Convention}
    \newtheorem{remark}[defi]{Remark}
\newcommand{\bels}[2] {
        \begin{equation} \label{#1} \begin{split} 
                #2 
        \end{split} \end{equation}
        }
\renewcommand{\bf}[1]{\boldsymbol{\mathrm{#1}}} 
\renewcommand{\cal}{\mathcal} 
\newcommand{\scr}{\mathscr} 
\newcommand{\ul}[1]{\underline{#1} \!\,} 
\newcommand{\ol}[1]{\overline{#1} \!\,} 
\newcommand{\wt}{\widetilde}
\renewcommand{\P}{\mathbb{P}}
\newcommand{\ii}{\mathrm{i}} 
\newcommand{\dd}{\mathrm{d}}
\newcommand{\p}[1]{({#1})}
\newcommand{\pb}[1]{\bigl({#1}\bigr)}
\newcommand{\pB}[1]{\Bigl({#1}\Bigr)}
\newcommand{\pbb}[1]{\biggl({#1}\biggr)}
\newcommand{\sbb}[1]{\biggl[{#1}\biggr]}
\newcommand{\cB}[1]{\Bigl\{{#1}\Bigr\}}
\newcommand{\cbb}[1]{\biggl\{{#1}\biggr\}}
\newcommand{\abs}[1]{\lvert #1 \rvert}
\newcommand{\absb}[1]{\big\lvert #1 \big\rvert}
\newcommand{\absB}[1]{\Big\lvert #1 \Big\rvert}
\newcommand{\absbb}[1]{\bigg\lvert #1 \bigg\rvert}
\newcommand{\norm}[1]{\lVert #1 \rVert}
\newcommand{\avg}[1]{\langle #1 \rangle}
\newcommand{\avgb}[1]{\big\langle #1 \big\rangle}
\newcommand{\scalar}[2]{\langle{#1} \mspace{2mu}, {#2}\rangle}
\DeclareMathOperator{\Tr}{Tr}
\DeclareMathOperator{\im}{Im}
\DeclareMathOperator{\dist} {dist}                
\DeclareMathOperator{\diam} {diam}                
\DeclareMathOperator{\spec}{Spec}				
\newcommand{\2} {\mspace{2 mu}}
\newcommand{\genarg} {{\,\cdot\,}}  
\newcommand{\sopb}{\boldsymbol{\mathcal S}}
\newcommand{\topb}{\boldsymbol{\mathcal T}}
\newcommand{\lopb}{\boldsymbol{\mathcal L}}
\newcommand{\blockdiag}{{{\mathcal{D}}}}
\newcommand{\tenspace}{{{\mathcal{M}}}}
\newcommand{\supnon}{{\mathbb{D}}}
\newcommand{\supmeas}{{\supp \rho}}
\newcommand{\disvz}[1][]{{d_{\rho}^{#1}(z)}}
\newcommand{\drho}[1][]{{d_{\rho}^{#1}}}
\begin{document}
\maketitle

\vspace{-0.7cm}

\begin{abstract}
\noindent
\bf{Abstract: } 
 For a general class of large non-Hermitian random block matrices $\Xf$  we  prove that  
 there are no eigenvalues away from a  deterministic set 
with very high probability. This set is  obtained from the Dyson equation of the Hermitization of $\Xf$
as the self-consistent approximation of the pseudospectrum. We 
demonstrate 
that the analysis of the  matrix Dyson equation from \cite{AjankiCorrelated} offers a unified
treatment of many structured matrix ensembles. 
\end{abstract}

\noindent \emph{Keywords:} Outliers, block matrices, local law, non-Hermitian random matrix, self-consistent pseudospectrum\\
\textbf{AMS Subject Classification:} 60B20, 15B52

\section{Introduction}

Large random matrices tend to exhibit deterministic patterns due to the cumulative effects of 
many independent random degrees of freedom. 
The Wigner semicircle law  \cite{Wigner1955}  describes the deterministic limit of the empirical density of eigenvalues
of Wigner matrices, i.e., \emph{Hermitian} random matrices with i.i.d. 
entries (modulo the Hermitian symmetry).  For \emph{non-Hermitian} matrices with i.i.d. entries, the limiting 
density is Girko's circular law,  i.e., the  uniform distribution in a disc centered around zero in the complex plane,
see  \cite{bordenave2012} for a review. 

For more complicated ensembles, no simple formula exists for  the limiting behavior, but second order
perturbation theory predicts that it may be obtained from the solution to a nonlinear equation, called the \emph{Dyson equation.}
While simplified forms of the Dyson equation  are present in practically every work on random matrices, its full scope
has only recently been analyzed systematically, see \cite{AjankiCorrelated}.
 In fact, the  proper Dyson equation describes not only the density of states but
the entire resolvent of the random matrix. Treating it as a genuine \emph{matrix equation} unifies many previous 
works that were specific to certain structures imposed on the random matrix.  These additional structures
often masked a fundamental  property of the Dyson equation, its stability against small perturbations,
that plays a key role in proving the expected  limit theorems, also called \emph{global laws}.
 Girko's monograph \cite{girko2012theory} is the most systematic
collection of many possible ensembles, yet it  analyzes them on a case by case basis. 

 In this paper, using  
the setup of the \emph{matrix Dyson equation (MDE)} from \cite{AjankiCorrelated},
  we demonstrate a unified treatment  for  a large class  of random matrix ensembles
that contain or generalize many of Girko's models. For brevity, we focus only on two basic problems: (i) obtaining  the  global law and (ii)
locating the spectrum.
 The global law, typically formulated as a weak convergence
of linear statistics of the eigenvalues, describes only the overwhelming majority of the eigenvalues. Even local versions
of this limit theorem,  commonly called \emph{local laws} (see e.g.  \cite{EJP2473,Bourgade2014,AltInhomCirc}
and references therein) are typically not sensitive to individual eigenvalues and 
they do not exclude that a few eigenvalues are located far away from the support of the density of states.

Extreme eigenvalues have nevertheless been controlled  in some simple  cases. In particular, 
for the i.i.d. cases, it is known
that with a very high probability all eigenvalues lie in an $\varepsilon$-neighborhood of the support of
the density of states.
These results can be proven with the moment method,  see \cite[Theorem 2.1.22]{anderson2010introduction} 
for the Hermitian (Wigner) case, and  \cite{geman1986} for the non-Hermitian i.i.d. case; see also \cite{Bai1986,bai1988} for  the optimal moment condition.
More generally, norms of polynomials in  large independent random matrices can be computed via free probability;
for GUE or GOE Gaussian matrices it was achieved in  \cite{Haagerup2005} 
and generalized to polynomials of general Wigner and Wishart type matrices in  \cite{anderson2013,Capitaine2007}. 
These results have  been  extended recently to polynomials 
that  include deterministic matrices with the goal of studying outliers, see  \cite{Belinschi2016} and references therein.

All these works concern  Hermitian matrices either directly or indirectly by
 considering  quantities, such as norms of non-Hermitian polynomials, that can be deduced
from related Hermitian problems.  
For general Hermitian random matrices, the density of states may be supported on several intervals. 
In this situation, excluding eigenvalues outside of the convex hull of this support is typically easier 
than excluding possible eigenvalues lying inside the gaps of the support.
This latter problem, however, is especially important for studying the spectrum of non-Hermitian random matrices $\Xf$,
since the eigenvalues of $\Xf$ around a complex parameter $\zeta$ can be understood by studying 
the spectrum of the Hermitized matrix 
\begin{equation}\label{Hzeta def}
   \Hf^\zeta = \begin{pmatrix} 0 & \Xf-\zeta\cr \Xf^*-\bar \zeta & 0\end{pmatrix}
\end{equation}
around 0. 
Note that for $\zeta \in\C$ away from the spectrum of $\Xf$, zero  will typically fall inside a gap of  the spectrum of $\Hf^\zeta$ by its symmetry.

In this paper, we consider a very general class of structured block matrices  $\Xf$ that we call \emph{Kronecker random matrices} 
 since their structure is reminiscent to the Kronecker product of matrices. 
They have  $L\times L$ large blocks and   each block
 consists of  a linear combination of  random $N\times N$ matrices with centered, independent,
not necessarily identically distributed entries;  see \eqref{eq:def_Xf}
later for the precise definition. 
We will keep $L$ fixed and let $N$ tend to infinity.
The matrix $\bf{X}$ has a correlation structure that stems from allowing the same $N \times N$ matrix to appear in different blocks.
This introduces an arbitrary linear dependence among the blocks, while keeping
independence inside the blocks.  The dependence is thus described by $L\times L$ deterministic \emph{structure matrices}.

Kronecker random 
 ensembles  occur in many real-world applications of random matrix theory, especially in  evolution of ecosystems 
\cite{Hastings1992} and  neural networks \cite{PhysRevLett.97.188104}. 
 These evolutions are described by a large system of ODE's with random coefficients and the spectral radius
of the coefficient matrix  determines the long time stability, see  \cite{may1972will} for the original idea. More recent results are found in 
\cite{PhysRevE.91.012820,Aljadeff2015,PhysRevLett.114.088101}
and references therein.  The ensemble we study here  is even more general as it allows for  linear dependence among the blocks
described by arbitrary structure matrices. This level of generality is essential for another
 application; to study spectral properties of polynomials of random matrices. These 
 are often studied via the ``linearization trick'' and the linearized matrix is exactly a Kronecker random matrix.
 This application is presented  in \cite{Erdos2017Polynomials}, where
the    results of the current paper are directly used.

We present general results that  exclude eigenvalues of Kronecker random matrices away from a  deterministic set  $\supnon$
with a very high probability. 
The set $\supnon$ is determined by solving the self-consistent Dyson equation.
In the Hermitian case, $\supnon$ is the \emph{self-consistent spectrum} defined as the  support of the \emph{self-consistent 
density of states} $\rho$ which is defined as the imaginary part of the solution 
to the Dyson equation when restricted to the real line. 
We also address the
general  non-Hermitian setup,  where the eigenvalues are not confined to the real line.
In this case,  the set $\supnon=\supnon_\eps$ contains an additional cutoff parameter $\eps$ 
and it is the \emph{self-consistent $\eps$-pseudospectrum}, 
 given via  the Dyson equation for the Hermitized problem $\Hf^\zeta$, see \eqref{eq:def_support_non_hermitian}
later.
The $\eps\to0 $ limit of the sets $\supnon_\eps$
 is expected not only to contain but to  coincide with the support of the 
 density of states  in the non-Hermitian case as well, but this  has been proven
only in some special cases.  We provide numerical examples to support this conjecture.

We point out that the global law and the location of the spectrum for $A+X$, where $X$ is an  i.i.d. centered random
matrix  and $A$ is a  general deterministic  matrix (so-called \emph{deformed ensembles}), have been extensively 
studied, see \cite{Bai2012,Bordenave2016,Bordenave_spec_rad2016,Tao2013,tao2010}.
For more references, we refer to  
the review \cite{bordenave2012}. In contrast to these papers,  the main focus of our work is to allow for general
 (not necessarily identical) distributions of the matrix elements.

In this paper, we first study   arbitrary  Hermitian Kronecker matrices $\Hf$; the Hermitization $\Hf^\zeta$ 
of a general Kronecker matrix is itself a Kronecker matrix and therefore 
just a special case. Our first
result is the global law, i.e., we show that the empirical density of states of  $\Hf$ is asymptotically given 
by the self-consistent density of states $\rho$ determined by
the Dyson equation. 
 We then also prove an optimal local law  for spectral parameters away from the instabilities of the Dyson equation.
The Dyson equation for Kronecker matrices is
 a system  of $2N$ nonlinear equations for $L\times L$ matrices, see
\eqref{eq:Dyson_non_Hermitian} later.   In case of identical distribution of the entries within each $N\times N$ matrix,
the system reduces to a single equation for a $2L\times 2L$ matrix -- a computationally feasible problem. 
 This  analysis provides not only the limiting  density of states but also 
 a full understanding of the resolvent for spectral parameters $z$ very close to the real line, down to scales $\im z\gg 1/N$. 
Although the optimal local law down to scales $\im z\gg 1/N$ cannot capture individual eigenvalues inside the support of $\rho$,
the key point is that outside of this support  a stronger estimate in the local law may be proven that  actually detects 
individual eigenvalues, or rather lack thereof.  
This observation  has been used for simpler models before, in particular \cite[Theorem 2.3]{EJP2473} already contained this stronger form 
of the local semicircle law for generalized Wigner matrices, see also \cite{Ajankirandommatrix} for Wigner-type matrices, 
\cite{AltGram} for Gram matrices and \cite{Erdos2017Correlated}   for correlated matrices with a uniform lower bound on
the variances. 
In particular, by running the stability analysis twice, this allows for an extension of the local law
for any $\im z>0$  outside of the support of $\rho$.

Finally, applying the local law to the Hermitization $\Hf^\zeta$ of a non-Hermitian Kronecker matrix $\Xf$, 
 we  translate
local spectral information on $\Hf^\zeta$ around 0 into information about the location of the spectrum of $\Xf$.
This is possible since $\zeta\in \mbox{Spec} (\Xf)$ if and only if $0\in \mbox{Spec} (\Hf^\zeta)$.  In practice, we give a
good approximation to the  
$\eps$-pseudospectrum of $\Xf$ by considering the set of those $\zeta$ values in $\C$ for which 0 is at least $\eps$ distance away from 
the support of the self-consistent density of states for $\Hf^\zeta$.

In the main part of the  paper,  we give a short, self-contained proof that directly aims at locating the  Hermitian  spectrum under the weakest conditions 
for the most general setup.  
We split the proof into two well-separated parts; a random and a deterministic one.
 In Section~\ref{sec:Hermitian_Kronecker_matrices} and~\ref{sec:FA} as well as Appendix~\ref{app:proof_local_law} we give
 a model-independent probabilistic proof of the main technical result, the local law 
(Theorem~\ref{thm:no_eigenvalues} and Lemma~\ref{lem:local_law}), 
assuming only two explicit conditions, boundedness and stability,  on the solution of the Dyson equation that can be checked separately for concrete models.
In Section~\ref{subsec:stability_mde}  we prove that these two conditions  are satisfied
for Kronecker matrices away from the self-consistent spectrum. 
The  key inputs behind the stability are (i) a matrix version of the Perron-Frobenius theorem and (ii) 
a sophisticated symmetrization procedure that is much more transparent
 in the matrix formulation. 
In particular, the global law is an immediate consequence of this approach. Moreover, the analysis 
 reveals  that outside of the spectrum the stability holds without any lower bound on the variances,
 in contrast to local laws inside the bulk spectrum that typically require some non-degeneracy condition on the matrix of variances. 
 
 We stress that only the first part involves randomness and we follow
  the Schur complement method and  concentration estimates for linear and quadratic functionals
of independent random variables. Alternatively, we could have used 
the cumulant expansion method that is typically  better suited for ensembles with correlation \cite{Erdos2017Correlated}.
We opted for the former path to demonstrate that correlations stemming from the block structure
can still be handled with the more direct Schur complement method as long as the non-commutativity
of the $L\times L$ structure matrices is properly taken into account. 
Utilizing a powerful \emph{tensor matrix structure} generated by the correlations between blocks 
resolves this issue automatically.

\paragraph{Acknowledgement} 
The authors are grateful to  David Renfrew  for several discussions and for calling their attention to references
on applications of non-Hermitian models.

\subsection{Notation} \label{subsec:notation}

Owing to the tensor product structure of Kronecker random matrices (see Definition \ref{def:kronecker_matrix} below), 
we need to introduce different spaces of matrices. In order to make the notation more transparent to the reader, 
we collect the conventions used on these spaces in this subsection. 

For $K, N \in \N$, we will consider the spaces $\C^{K\times K}$, $(\C^{K\times K})^N$ and $\C^{K\times K} \otimes \C^{N\times N}$, i.e., 
we consider $K\times K$ matrices, $N$-vectors of $K\times K$ matrices and $N\times N$ matrices with $K \times K$ matrices as entries. 
For brevity, we denote $\tenspace \defeq \C^{K\times K} \otimes \C^{N\times N}$. 
Elements of $\C^{K\times K}$ are usually denoted by small roman letters, elements of $(\C^{K\times K})^N$ by small boldface roman letters and 
elements of $\tenspace$ by capitalized boldface roman letters. 
 
For $\alpha \in \C^{K\times K}$, we denote by $\abs{\alpha}$ the matrix norm of $\alpha$ induced by the Euclidean distance on $\C^K$.
Moreover, we define two different norms on the $N$-vectors of $K\times K$ matrices. For any $\rf =(r_1, \ldots,r_N) \in (\C^{K\times K})^N$
we define  $\norm{\rf}  \defeq \max_{i=1}^N \abs{r_{i}}$,  
and   
\begin{equation} \label{eq:def_normhs_matrix_vector}
\normhs{\rf}^2 \defeq \frac{1}{NK} \sum_{i=1}^N \Tr(r_i^*r_i).
\end{equation}
 These  are the analogues of the maximum norm and the Euclidean norm for vectors in $\C^N$ which corresponds to $K=1$. 
Note that $\normhs{\rf} \leq \norm{\rf}$.   

For any function $f\colon U \to \C^{K\times K}$ from $U \subset \C^{K\times K}$ to $\C^{K\times K}$,
we lift $f$ to $U^N$ by defining $f(\rf) \in (\C^{K\times K})^N$ entrywise for any $\rf=(r_1, \ldots, r_N) \in U^N \subset (\C^{K\times K})^N$, i.e.,  
\begin{equation} \label{eq:def_function_vector_matrices}
 f(\rf) \defeq (f(r_1), \ldots, f(r_N)). 
\end{equation}
We will in particular apply this definition for $f$ being the matrix inversion map and the imaginary part. Moreover, for $\xf=(x_1, \ldots, x_N)$, $\yf = (y_1, \ldots, y_N) \in (\C^{K\times K})^N$ 
we introduce their entrywise product $\xf\yf \in (\C^{K\times K})^N$ through 
\begin{equation} \label{eq:def_product_vector_matrices}
 \xf\yf \defeq (x_1y_1, \ldots, x_Ny_N) \in (\C^{K\times K})^N. 
\end{equation}
Note that for $K \neq 1$, in general, $\xf \yf \neq \yf \xf$. 

If $a \in \C^{K\times K}$ or $\Af \in \tenspace$ are positive semidefinite matrices, then we write $a \geq 0$ or $\Af \geq 0$, respectively. 
Similarly, for $\af \in (\C^{K\times K})^N$, we write $\af\geq 0$ to indicate that all components of $\af$ are positive semidefinite matrices in $\C^{K\times K}$. 
The identity matrix in $\C^{K\times K}$ and $\tenspace$ is denoted by $\id$. 

We also use two norms on $\tenspace$. These are the operator norm $\normtwo{\genarg}$ induced by the Euclidean distance on $\C^{KN}\cong \C^K \otimes \C^N$ 
and the norm $\normhs{\genarg}$  induced  by the scalar product $\scalar{\genarg}{\genarg}$ on $\tenspace$ defined through 
\begin{equation}  \label{eq:def_normHS}
\scalar{\Rf}{\Tf} \defeq \frac{1}{NK}\Tr\left(\Rf^* \Tf\right), \qquad \normhs{\Rf}  \defeq \sqrt{\scalar{\Rf}{\Rf}}, 
\end{equation}
for $\Rf, \Tf \in \tenspace$. In particular, all orthogonality statements on $\tenspace$ are understood with respect to this 
scalar product. Furthermore, we introduce  $\avg{\Rf} \defeq \scalar{\id}{\Rf}$,   the normalized trace  for $\Rf \in \tenspace$.

We also consider linear maps on $(\C^{K\times K})^N$ and $\tenspace$, respectively. 
We follow the convention that the symbols $\Sf$, $\Lf$ and $\mathscr{T}$ label linear maps $(\C^{K\times K})^N \to (\C^{K\times K})^N$ 
and $\sopb$, $\lopb$ or $\topb$ denote linear maps $\tenspace\to\tenspace$.
The symbol $\Id$ refers to the identity map on $\tenspace$. 
For any linear map $\mathscr{T} \colon (\C^{K\times K})^N \to (\C^{K\times K})^N$, let $\norm{\mathscr{T}}$ denote the operator norm of $\mathscr{T}$ induced by~$\norm{\genarg}$ 
and let $\normsp{\mathscr{T}}$ denote the operator norm induced by $\normhs{\genarg}$. 
Similarly, for a linear map $\topb \colon \tenspace\to\tenspace$, we write $\norm{\topb}$ for the operator norm induced by $\normtwo{\genarg}$ on $\tenspace$ and  
$\normsp{\topb}$ for its operator norm induced by $\normhs{\genarg}$ on $\tenspace$. 

We use the notation $[n] \defeq \{1, \ldots, n\}$ for $n \in\N$. For $i,j \in [N]$, we introduce
 the matrix $E_{ij} \in \C^{N \times N}$ which has a one at its $(i,j)$ entry and only zeros otherwise, i.e., 
\begin{equation} \label{eq:def_E_ij}
 E_{ij} \defeq (\delta_{ik} \delta_{jl} )_{k,l=1}^N. 
\end{equation}
For $i,j \in [N]$, the linear map $P_{ij}\colon \tenspace \to \C^{K \times K}$ is defined through 
\begin{equation} \label{eq:def_P_ij}
 P_{ij} \Rf = r_{ij}, 
\end{equation}
for any $\Rf = \sum_{i,j=1}^N r_{ij} \otimes E_{ij} \in \tenspace$ with $r_{ij} \in \C^{K\times K}$.

\section{Main results}

Our main object of study are Kronecker random matrices which we define first. To that end, we recall the definition of $E_{ij}$ from~\eqref{eq:def_E_ij}.

\begin{defi}[Kronecker random matrix] \label{def:kronecker_matrix}
A random matrix $\Xf \in \C^{L\times L}\otimes \C^{N\times N}$ is called \emph{Kronecker random matrix} if
it is of the form
\begin{equation} \label{eq:def_Xf}
 \Xf = \sum_{\mu=1}^{\ell} \wt{\alpha}_\mu \otimes X_\mu + \sum_{\nu=1}^{\ell}(\wt{\beta}_\nu \otimes Y_\nu + \wt{\gamma}_\nu \otimes Y_\nu^*) + \sum_{i=1}^N \wt{a}_{i}\otimes E_{ii}, \qquad \ell\in\N,
\end{equation}
where $X_\mu = X_\mu^*\in\C^{N\times N}$ are Hermitian  random matrices
with centered independent entries (up to the Hermitian symmetry) and $Y_\nu\in\C^{N\times N}$ are random matrices 
with centered  independent entries; furthermore $X_1, \ldots, X_{\ell}, Y_1, \ldots, Y_{\ell}$ are independent.
The ``coefficient'' matrices $\wt{\alpha}_\mu, \wt{\beta}_\nu, \wt{\gamma}_\nu \in\C^{L\times L}$ are deterministic 
and they are called \emph{structure matrices}. Finally,  $\wt{a}_1, \ldots, \wt{a}_N \in \C^{L\times L}$
 are also deterministic. 
\end{defi}
We remark  that the number of $X_\mu$ and $Y_\nu$ matrices effectively present in $\Xf$ may differ by choosing some structure matrices zero.
Furthermore, note that $\E \Xf = \sum_{i=1}^N \wt{a}_i \otimes E_{ii}$, i.e., the deterministic
matrices $\wt{a}_i$ encode the expectation of $\Xf$.

Our main result asserts that  all eigenvalues of a Kronecker random matrix $\Xf$  
are contained in  the \emph{self-consistent $\eps$-pseudospectrum} for any  $\eps>0$,
with a very high probability if $N$ is sufficiently large.  
The self-consistent $\eps$-pseudospectrum, $\supnon_\eps$, is 
a deterministic subset of the complex plane that can be  defined and computed  via
the self-consistent solution  to the \emph{Hermitized Dyson equation.} 
Hermitization entails doubling the dimension and studying the matrix $\Hf^\zeta$ defined in \eqref{Hzeta def}
for any spectral parameter $\zeta\in \C$ associated with  $\Xf$. We introduce an additional spectral 
parameter $z\in \Hb\defeq \{ w \in \C \colon \Im w >0 \}$ that will be associated with the Hermitian matrix $\Hf^\zeta$.
The Hermitized Dyson equation is used to study the resolvent $(\Hf^\zeta-z)^{-1}$.

We first introduce some notation necessary to write up the Hermitized Dyson equation. 
For $\mu,\nu \in [\ell]$, we define
\begin{equation}
 \alpha_\mu \defeq \begin{pmatrix} 0 & 1\\ 0 & 0 \end{pmatrix} \otimes \wt{\alpha}_\mu + \begin{pmatrix} 0 & 0\\ 1 & 0 \end{pmatrix} \otimes \wt{\alpha}_\mu^*, 
\qquad \beta_\nu \defeq \begin{pmatrix} 0 & 1 \\ 0 & 0 \end{pmatrix} \otimes \big(\wt{\beta}_\nu + \wt{\gamma}_\nu^*\big) . \label{eq:mapping_non_hermitian_to_hermitian1}
\end{equation}
We set 
\begin{equation} \label{eq:def_variances}
s_{ij}^\mu \defeq  \E\, \abs{x_{ij}^\mu}^2, \qquad t_{ij}^\nu \defeq \E\, \abs{y_{ij}^\nu}^2,
\end{equation}
where $x_{ij}^\mu$ and $y_{ij}^\nu$ are the (scalar) entries of the random matrices $X_\mu$ and $Y_\nu$, respectively, i.e., $X_\mu=(x_{ij}^\mu)_{i,j=1}^N$ and $Y_\nu = (y_{ij}^\nu)_{i,j=1}^N$. 
 We define a linear map $\Sf$ on $(\C^{2 \times 2} \otimes \C^{L\times L})^N$, i.e., on $N$-vectors of  $(2L)\times (2L)$ 
matrices as follows. For any $\rf = (r_1, \ldots, r_N) \in (\C^{2 \times 2} \otimes \C^{L\times L})^N$ we set
$$ 
     \Sf[\rf] = \big( \Sf_1[\rf], \Sf_2[\rf], \ldots , \Sf_N[\rf]\big) \in (\C^{2 \times 2} \otimes \C^{L\times L})^N,
$$
where the $i$-th component is given by 
\begin{equation} \label{eq:sopb_l}
 \Sf_i[\rf] \defeq \sum_{k=1}^N\left( \sum_{\mu=1}^{\ell} s_{ik}^\mu \alpha_\mu r_k \alpha_\mu + \sum_{\nu=1}^{\ell} \left( t_{ik}^\nu \beta_\nu r_k \beta_\nu^* + t_{ki}^\nu \beta_\nu^* r_k \beta_\nu \right) \right) \in  \C^{2 \times 2} \otimes \C^{L\times L} , \qquad  i\in [N].  
\end{equation}
For $j \in [N]$ and $\zeta \in\C$, we define $a_j^\zeta \in \C^{2\times 2}\otimes \C^{L\times L}$ through
\begin{equation} \label{eq:mapping_non_hermitian_to_hermitian2}
 a_j^\zeta \defeq \begin{pmatrix} 0 & 1 \\ 0 & 0 \end{pmatrix} \otimes \wt{a}_j + \begin{pmatrix} 0 & 0 \\ 1 & 0 \end{pmatrix} \otimes \wt{a}_j^* - \begin{pmatrix} 0 & \zeta \\ \bar \zeta & 0 \end{pmatrix} \otimes \id. 
\end{equation}
 The Hermitized Dyson equation is the following system of equations
\begin{equation} \label{eq:Dyson_non_Hermitian}
- \frac{1}{m_j^\zeta(z)} = z\id  - a_j^\zeta + \Sf_j[\mf^\zeta(z)], \qquad  j=1,2,\ldots N, 
\end{equation}
for the vector 
$$
  \mf^\zeta(z)= \big( m_1^\zeta(z), \ldots , m_N^\zeta(z)\big) \in (\C^{2 \times 2} \otimes \C^{L\times L})^N.
$$
Here, $\id$ denotes the identity matrix in $\C^{2\times 2} \otimes \C^{L\times L}$ and $\zeta \in \C$ as well as $z\in \Hb$ are spectral parameters 
associated to $\Xf$ and $\Hf^\zeta$, respectively.

\begin{lem} \label{lem:Dyson_non_hermitian} For any $z\in\Hb$ and $\zeta\in \C$ there exists a unique
solution to \eqref{eq:Dyson_non_Hermitian} with the additional condition that  the matrices 
$\Im m_j^\zeta( z )\defeq \frac{1}{2i}(m_j^\zeta(z  ) - m_j^\zeta( z  )^*)$ are positive definite for all $j\in[N]$. 
Moreover, for $j\in[N]$, there are measures $v_j^\zeta$ on $\R$ with values in the positive semidefinite matrices in $\C^{2\times 2}\otimes \C^{L\times L}$ such that 
 \[ m_j^\zeta(z) = \int_\R \frac{v_j^\zeta(\di \tau)}{\tau-z} \]
for all $z \in \Hb$ and $\zeta \in \C$. 
\end{lem}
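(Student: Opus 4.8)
The plan is to recast the vector equation~\eqref{eq:Dyson_non_Hermitian} as a single matrix Dyson equation (MDE) on $\tenspace\defeq\C^{2\times 2}\otimes\C^{L\times L}\otimes\C^{N\times N}$ (the space of Section~\ref{subsec:notation} with $K=2L$) and then invoke the general existence, uniqueness and integral–representation theory for the MDE from \cite{AjankiCorrelated}. Concretely, using the projections $P_{jj}$ from~\eqref{eq:def_P_ij}, set $\Af^\zeta\defeq\sum_{j=1}^N a_j^\zeta\otimes E_{jj}\in\tenspace$ and define the linear map $\sopb\colon\tenspace\to\tenspace$ by
\[
 \sopb[\Rf]\defeq\sum_{i=1}^N\Sf_i\big[(P_{11}\Rf,\dots,P_{NN}\Rf)\big]\otimes E_{ii},\qquad\Rf\in\tenspace.
\]
Since $\Af^\zeta$ and $\sopb[\Rf]$ are block diagonal (with $N$ blocks of size $2L$) for every $\Rf$, any $\Mf\in\tenspace$ with positive definite imaginary part solving the MDE
\[
 -\Mf^{-1}=z\id-\Af^\zeta+\sopb[\Mf]
\]
has a block-diagonal right-hand side, hence $\Mf$ itself is block diagonal; conversely a block-diagonal $\Mf=\sum_j m_j\otimes E_{jj}$ with each $\Im m_j>0$ solves this MDE exactly when $(m_1,\dots,m_N)$ solves~\eqref{eq:Dyson_non_Hermitian}. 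Thus the two problems are equivalent, and it suffices to treat the MDE.

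Next I would verify that the pair $(\Af^\zeta,\sopb)$ meets the hypotheses of the general theory. First, $\Af^\zeta$ is self-adjoint for every $\zeta\in\C$: in~\eqref{eq:mapping_non_hermitian_to_hermitian2} the first two summands of $a_j^\zeta$ are interchanged by taking adjoints, and $\begin{pmatrix}0 & \zeta\\\bar\zeta & 0\end{pmatrix}\otimes\id$ is self-adjoint. Second, $\sopb$ is positivity preserving: if $\Rf\geq 0$, then each diagonal block $P_{kk}\Rf$ is positive semidefinite, and since $\alpha_\mu=\alpha_\mu^*$ every summand in~\eqref{eq:sopb_l} is of the form (nonnegative scalar)$\,\cdot\,b\,(P_{kk}\Rf)\,b^*$ with $b\in\{\alpha_\mu,\beta_\nu,\beta_\nu^*\}$ and is therefore positive semidefinite; a block-diagonal matrix with positive semidefinite blocks is positive semidefinite, so $\sopb[\Rf]\geq 0$. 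The same computation shows that $\sopb$ maps self-adjoint elements to self-adjoint elements.

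Finally I would quote the general result: for self-adjoint $\Af^\zeta$ and positivity-preserving $\sopb$, the MDE above has, for each $z\in\Hb$, a unique solution $\Mf^\zeta(z)$ with $\Im\Mf^\zeta(z)>0$; moreover $z\mapsto\Mf^\zeta(z)$ is analytic on $\Hb$, it is a matrix-valued Nevanlinna function, and $z\Mf^\zeta(z)\to-\id$ as $z\to\i\infty$, so by the matrix Nevanlinna representation $\Mf^\zeta(z)=\int_\R(\tau-z)^{-1}V^\zeta(\di\tau)$ for a (block-diagonal) positive-semidefinite-matrix-valued measure $V^\zeta$ on $\R$. By the equivalence from the first step, the block-diagonal solution $\Mf^\zeta(z)=\sum_j m_j^\zeta(z)\otimes E_{jj}$ yields the asserted unique $\mf^\zeta(z)=(m_1^\zeta(z),\dots,m_N^\zeta(z))$ with all $\Im m_j^\zeta(z)>0$, and taking $v_j^\zeta$ to be the $j$-th diagonal block of $V^\zeta$ — again positive-semidefinite-matrix valued — gives $m_j^\zeta(z)=\int_\R(\tau-z)^{-1}v_j^\zeta(\di\tau)$. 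There is no deep difficulty here: the argument simply packages~\eqref{eq:Dyson_non_Hermitian} into the MDE framework of~\cite{AjankiCorrelated}. The only slightly delicate points are the positivity of $\sopb$, which rests on the self-adjoint structure of the $\alpha_\mu$ in~\eqref{eq:mapping_non_hermitian_to_hermitian1}, and the preservation of block-diagonality, which is what makes the vector and matrix formulations genuinely equivalent.
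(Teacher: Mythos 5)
Your proposal is correct and follows essentially the same route as the paper: the paper also reduces \eqref{eq:Dyson_non_Hermitian} to the matrix Dyson equation on block-diagonal elements of $\C^{2L\times 2L}\otimes\C^{N\times N}$ (via Proposition~\ref{pro:exist_unique_M} and Lemma~\ref{lmm:matrix support of V}), checks that $a_j^\zeta$ is Hermitian and $\sopb$ positivity preserving, and then invokes the existence/uniqueness result of \cite{Helton01012007} together with the Stieltjes representation from Proposition~2.1 of \cite{AjankiCorrelated}. The only cosmetic difference is that you do not explicitly note the self-adjointness of $\sopb$ (which follows from $s_{ij}^\mu=s_{ji}^\mu$), but that property is not needed for this lemma.
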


Lemma \ref{lem:Dyson_non_hermitian} is proven after Proposition \ref{pro:exist_unique_M} below. 
 Throughout the paper 
 $\mf^\zeta$ will always denote the  unique solution to the Hermitized Dyson equation 
  defined in Lemma \ref{lem:Dyson_non_hermitian}.    
The \emph{self-consistent density of states} $\rho^\zeta$ of $\Hf^\zeta$ is given by 
\[ \rho^\zeta(\di \tau) \defeq \frac{1}{2LN} \sum_{j=1}^N \Tr v_j^\zeta(\di\tau)\] 
(cf. Definition~\ref{def:self-consistent_density_of_states} below). 
The \emph{self-consistent spectrum} of $\Hf^\zeta$ is the set 
  $\supp \rho^\zeta = \bigcup_{j=1}^N \supp v_j^\zeta$.
  Finally, for any $\eps>0$ the \emph{self-consistent $\eps$-pseudospectrum of $\Xf$}
  is defined by
  \begin{equation} \label{eq:def_support_non_hermitian}
\supnon_\eps \defeq \big\{\zeta\in\C \colon \dist(0,\supp \rho^\zeta) \leq \eps \big\}.
\end{equation}
 The eigenvalues of $\Xf$ will concentrate on the set $\supnon_\eps$ for any fixed $\eps>0$ if $N$ is large.
 The motivation for this definiton \eqref{eq:def_support_non_hermitian} is that $\zeta$ is in the 
$\eps$-pseudospectrum of $\Xf$ 
if and only if  $0$ is in the $\eps$-vicinity of the 
  spectrum of $\Hf^\zeta$, i.e., $\dist(0, \spec(\Hf^\zeta))\le \eps$. 
We recall that the 
\emph{$\eps$-pseudospectrum} $\spec_{\eps}(\Xf)$ of $\Xf$ 
is defined through
\begin{equation} \label{eq:def_pseudospectrum}
 \spec_\eps(\Xf) \defeq \spec(\Xf) \cup \big\{ \zeta \in \C\setminus\spec(\Xf) \colon \normtwo{(\Xf-\zeta)^{-1}} \geq \eps^{-1}\big\}. 
\end{equation}
In accordance with Subsection \ref{subsec:notation}, $\normtwo{\cdot}$ denotes the operator norm on $\C^{L\times L}\otimes \C^{N\times N}$ induced by the Euclidean norm on $\C^{L}\otimes\C^{N}$. 

The precise statement is given in Theorem \ref{thm:no_eigenvalues_non_hermitian} below whose conditions we collect next.

\begin{assums} \label{assums:non_hermitian}
\begin{enumerate}[(i)]
\item (Upper bound on variances) There is $\kappa_1>0$ such that 
\begin{equation}
\label{eq:upper_bound_variances} 
s_{ij}^\mu \leq \frac{\kappa_1}{N}, \qquad t_{ij}^\nu \leq \frac{\kappa_1}{N}
\end{equation}
for all $i,j \in [N]$ and $\mu,\nu \in [\ell]$.
\item (Bounded moments) For each $p \in \N$, $p \geq 3$, there is $\varphi_p >0$ such that 
\begin{equation}
\label{eq:moments_bounds} 
\E \abs{x_{ij}^\mu}^p \leq \varphi_p N^{-p/2},\qquad  \E \abs{y_{ij}^\nu}^p \leq \varphi_pN^{-p/2}
\end{equation}
for all $i,j \in [N]$ and $\mu, \nu \in [\ell]$.
\item (Upper bound on structure matrices) There is $\kappa_2 >0$ such that
\begin{equation} \label{eq:bound_coefficients_non_hermitian} 
\max_{\mu \in[\ell]} \abs{\wt{\alpha}_{\mu}}\leq \kappa_2, \qquad \max_{\nu \in[\ell]} \abs{\wt{\beta}_{\nu}} \leq \kappa_2, 
\end{equation}
where $\abs{\alpha}$ denotes the operator norm induced by the Euclidean norm on $\C^L$. 
\item (Bounded expectation) Let $\kappa_3>0$ be such that the matrices $\wt{a}_i \in\C^{L\times L}$ satisfy
\begin{equation} \label{eq:A_two_bounded_non_hermitian}
\max_{i=1}^N \abs{\wt{a}_i} \leq \kappa_3.
\end{equation}
\end{enumerate}
\end{assums}
The constants $L$, $\ell$, 
$\kappa_1$, $\kappa_2$, $\kappa_3$ and $(\varphi_p)_{p\in \N}$ are called \emph{model parameters}. 
Our estimates will be uniform in all models possessing the same model parameters, in particular the bounds
will be uniform in $N$, the large parameter in our problem. Now we can formulate our main result:

\begin{thm}[All eigenvalues of $\Xf$ are inside self-consistent $\eps$-pseudospectrum] \label{thm:no_eigenvalues_non_hermitian}
Fix $L\in \N$. 
Let $\Xf$ be a  Kronecker  random matrix as in \eqref{eq:def_Xf} such that 
the bounds \eqref{eq:upper_bound_variances}~--~\eqref{eq:A_two_bounded_non_hermitian} are satisfied.

Then for each $\eps>0$ and $D>0$, there is a constant $C_{\eps,D} >0$ such that 
\begin{equation} \label{eq:no_eigenvalues_non_hermitian}
\P \big( \spec(\Xf) \subset \supnon_\eps \big) \geq 1- \frac{C_{\eps,D}}{N^D}.
\end{equation}
The constant $C_{\eps, D}$ in \eqref{eq:no_eigenvalues_non_hermitian} only depends on the model parameters in addition to $\eps$ and $D$. 
\end{thm}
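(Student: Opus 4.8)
The plan is to reduce the non-Hermitian statement to a uniform quantitative statement about the Hermitized matrices $\Hf^\zeta$ and then patch local information over a net of $\zeta$-values. The starting point is the equivalence $\zeta\in\spec(\Xf)\iff 0\in\spec(\Hf^\zeta)$, together with the observation that $\Hf^\zeta$ is itself a Kronecker random matrix (with the doubled $2L$-dimensional block structure built from $\alpha_\mu,\beta_\nu,a_j^\zeta$ via \eqref{eq:mapping_non_hermitian_to_hermitian1}--\eqref{eq:mapping_non_hermitian_to_hermitian2}); crucially, the randomness in $\Hf^\zeta$ does not depend on $\zeta$ — only the deterministic shift $-\binom{0\ \zeta}{\bar\zeta\ 0}\otimes\id$ does — so the \emph{same} realization of the underlying $X_\mu,Y_\nu$ is being used simultaneously for all $\zeta$. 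First I would invoke the Hermitian local law for Kronecker matrices (Theorem~\ref{thm:no_eigenvalues}/Lemma~\ref{lem:local_law}) applied to $\Hf^\zeta$ to conclude that, for a \emph{fixed} $\zeta$ with $\dist(0,\supp\rho^\zeta)>\eps$, with probability $\ge 1-C_{\eps,D}N^{-D}$ the matrix $\Hf^\zeta$ has no eigenvalue in a neighborhood of $0$ — hence $\zeta\notin\spec(\Xf)$. The point is that this uses the \emph{strong} form of the local law outside the self-consistent spectrum (the form detecting the absence of individual eigenvalues), which requires only the boundedness and stability of the solution $\mf^\zeta$ of the Dyson equation away from $\supp\rho^\zeta$ — and by the discussion in Section~\ref{subsec:stability_mde} stability outside the spectrum holds \emph{without} any lower bound on the variances, which is exactly what makes the Hermitization applicable here.

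Next I would upgrade this pointwise-in-$\zeta$ statement to a simultaneous statement over all $\zeta\notin\supnon_\eps$. The mechanism is a union bound over a fine net: restrict attention to $\zeta$ in a large disc $\{|\zeta|\le R\}$ (for $|\zeta|$ large, say $|\zeta| > C\kappa_3 + C\sqrt{\kappa_1 L}$ or so, a crude norm bound $\|\Xf\|_2 \le |\zeta|/2$ already forces $\zeta\notin\spec(\Xf)$ deterministically with high probability, so only a compact region matters), take an $N^{-100}$-net $\{\zeta_k\}$ of $\{|\zeta|\le R\}\setminus\supnon_{\eps/2}$ of polynomial cardinality, apply the previous step to each $\zeta_k$ with $\eps$ replaced by $\eps/2$ and $D$ replaced by $D+C$, and union-bound. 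Then for a general $\zeta\notin\supnon_\eps$ I need a deterministic continuity/interpolation argument: if $0\in\spec(\Hf^\zeta)$ then $\|(\Hf^{\zeta_k})^{-1}\|_2$ is at least $(|\zeta-\zeta_k|)^{-1}\sim N^{100}$ for the nearest net point, because $\Hf^\zeta - \Hf^{\zeta_k}$ has operator norm $|\zeta-\zeta_k|$; but the strong local law at $\zeta_k$ gives a lower bound of order $1$ on $\dist(0,\spec(\Hf^{\zeta_k}))$, i.e. $\|(\Hf^{\zeta_k})^{-1}\|_2 \lesssim 1$, a contradiction. So on the good event, $0\notin\spec(\Hf^\zeta)$ for every $\zeta$ in the net's ambient region, hence $\spec(\Xf)\cap(\{|\zeta|\le R\}\setminus\supnon_\eps)=\emptyset$; combined with the norm bound, $\spec(\Xf)\subset\supnon_\eps$.

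The two technical checks that feed into the above are: (a) the solution $\mf^\zeta(z)$ is uniformly bounded and the stability operator $\Id-\Sf(\,\cdot\,)$ has a uniformly bounded inverse for $z$ in a neighborhood of $\{\,w: \dist(\re w,\supp\rho^\zeta)>\eps/2\,\}$, with constants depending only on the model parameters and $\eps$ — this is precisely what Section~\ref{subsec:stability_mde} establishes via the matrix Perron–Frobenius theorem and the symmetrization procedure, and one must check that these bounds are uniform over the relevant $\zeta$ (which follows since the relevant quantities — the variances $s^\mu_{ik},t^\nu_{ik}$, the structure matrices, and $\dist(0,\supp\rho^\zeta)$ — are controlled uniformly by Assumptions~\ref{assums:non_hermitian} and the defining condition $\zeta\notin\supnon_\eps$); and (b) the self-consistent spectrum $\supp\rho^\zeta$ depends continuously (indeed Hölder- or Lipschitz-continuously) on $\zeta$, so that $\supnon_{\eps/2}$ is a closed set slightly larger than $\supnon_\eps$ and the net can be chosen in its complement — this follows from the stability of the Dyson equation under the $O(|\zeta-\zeta'|)$ perturbation of the data $a_j^\zeta$. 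I expect the main obstacle to be bookkeeping the \emph{uniformity} of all constants in $\zeta$ simultaneously with the high-probability bound: one must make sure that the exceptional probability in the local law for $\Hf^{\zeta_k}$ does not degrade as $\zeta_k$ approaches the boundary $\partial\supnon_{\eps/2}$, which is handled by fixing the cushion $\eps/2$ away from $\supnon_\eps$ once and for all before taking the net, so that the stability constant is bounded uniformly over all net points. Everything else — the norm bound on $\Xf$, the net cardinality count, the deterministic resolvent interpolation — is routine.
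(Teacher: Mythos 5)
Your proposal is correct and follows essentially the same route as the paper's proof of Lemma~\ref{lem:pseudospectrum_subset_self_consistent}: Hermitization, pointwise application of Theorem~\ref{thm:no_eigenvalues} to $\Hf^\zeta$ for each fixed $\zeta$ outside the (slightly enlarged) self-consistent pseudospectrum, and then a net in $\zeta$ with a union bound and the $1$-Lipschitz dependence of $\spec(\Hf^\zeta)$ on $\zeta$ (you use the operator-norm/Weyl perturbation where the paper invokes Hoffman--Wielandt; these are interchangeable here). The only point where you diverge, and where your justification is too glib, is the large-$|\zeta|$ regime: the bound $\normtwo{\Xf}\lesssim 1$ with high probability is \emph{not} "crude" --- the trace bound only gives $\normtwo{\Xf}\prec\sqrt{N}$ --- so either you must justify the $O(1)$ operator-norm bound separately (e.g.\ by applying the Hermitian theorem to $\Hf^0$, whose self-consistent spectrum is compact by \eqref{eq:support_V}), or do as the paper does and work on the disc $D(0,N)$, which then forces you to use the unbounded-$\Af$ part (ii) of Theorem~\ref{thm:no_eigenvalues} for net points with $|\zeta_k|$ growing with $N$.
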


\begin{rem}
\begin{enumerate}[(i)]
\item 
Theorem~\ref{thm:no_eigenvalues_non_hermitian} follows from the slightly stronger Lemma \ref{lem:pseudospectrum_subset_self_consistent} below; we show that not only the spectrum of $\Xf$ but also its $\eps/2$-pseudospectrum 
lies in the self-consistent $\eps$-pseudospectrum.
\item By carefully following the proof of Lemma~\ref{lem:pseudospectrum_subset_self_consistent}, one can see that $\eps$ can be replaced by $N^{-\delta}$ with a small universal constant $\delta >0$.
The constant $C$ in \eqref{eq:no_eigenvalues_non_hermitian} will depend only on $D$ and the model parameters. 
\item (Only finitely many moments)
If \eqref{eq:moments_bounds} holds true only for $p \leq P$ and some $P \in \N$ then there is a $D_0(P) \in \N$ such that 
the bound \eqref{eq:no_eigenvalues_non_hermitian} is valid for all $D \leq D_0(P)$.  
\item 
The self-consistent $\eps$-pseudospectrum $\supnon_\eps$ from \eqref{eq:def_support_non_hermitian} is defined in terms of the support of the self-consistent density of states of the Hermitized 
Dyson equation \eqref{eq:Dyson_non_Hermitian}. In particular, to determine $\supnon_\eps$ one needs to solve the Dyson equation for spectral parameters $z$ in a neighborhood of $z=0$. 
There is an alternative definition for a deterministic $\eps$-regularized set that is comparable to $\supnon_\eps$ and requires to solve the Dyson equation solely on the imaginary axis $z=\ii\eta$, namely 
\bels{def of wt D eps}{
\wt{\supnon}_\eps\,&=\, \cB{\zeta: \limsup_{\eta \downarrow 0}\frac{1}{\eta} \max_j\abs{\im m_j^\zeta(\ii \eta)} 
\geq \frac{1}{\eps}}\,.
}
Hence, \eqref{eq:no_eigenvalues_non_hermitian} is true if $\supnon_\eps$ is replaced by $\wt{\supnon}_\eps$. 
For more details we refer the reader to Appendix \ref{app:alt_def_pseudo}. 
\item (Hermitian matrices)  If $\Xf$ is  a Hermitian random matrix, $\Xf=\Xf^*$, i.e., $\wt{\alpha}_\mu = \wt{\alpha}_\mu^*$ and $\wt{\beta}_\nu^*=\wt{\gamma}_\nu$ 
for all $\mu,\nu\in[\ell]$ and $\wt{a}_i^*=\wt{a}_i$ for all $i \in [N]$,  then the Hermitization is superfluous
and the Dyson equation may be formulated  directly for $\Xf$. 
One may easily show that the support of the self-consistent density of states $\rho$  
is the intersection of all self-consistent $\eps$-pseudospectra: 
\[ \supmeas = \bigcap_{\eps>0} \supnon_\eps. \]
\item Theorem \ref{thm:no_eigenvalues_non_hermitian} as well as its stronger version for the Hermitian case, 
Theorem~\ref{thm:no_eigenvalues}, 
 identify a deterministic superset of the spectrum of $\Xf$.  In fact, it is expected that for a large class
 of Kronecker matrices  the set $\bigcap_{\eps>0} \supnon_\eps$ is the smallest deterministic set
 that still contains the entire $\spec(\Xf)$ up to a negligible distance. For $L=1$ this  has been proven 
 for many Hermitian ensembles  and for the  circular ensemble.
 Example~\ref{numerics}  below presents numerics for the $L\ge 2$ case.
\end{enumerate}
\end{rem}

\begin{exam}\label{numerics}
Fix $L \in \N$. Let $\zeta_1, \ldots, \zeta_L \in \C$ 
and $a \in \C^{L\times L}$ denote the diagonal matrix with $\zeta_1, \ldots, \zeta_L$ on its diagonal. 
We set $\Xf \defeq  a\otimes \id + \Wf$, where $\Wf$  has centered i.i.d. entries with  variance $1/(NL)$.
Clearly, $\Xf$ is a Kronecker matrix. 
In this case  the Dyson equation can be directly solved  and one easily  finds that 
\begin{equation}\label{set}
 \bigcap_{\eps>0} \supnon_\eps = \Big\{ \zeta \in \C \colon \sum_{i=1}^L \frac{1}{\abs{\zeta_i - \zeta}^2} \geq  L \Big\} 
 \end{equation}
(To our knowledge, the formula on the r.h.s. first appeared in \cite{Khoruzhenko1996}).
Figure \ref{fig:figure_numerics} shows the set \eqref{set} and the actual eigenvalues of $\Xf$ for  $N=8000$ and different matrices $a$.
\end{exam}

\newlength\fwidth
\begin{figure}[ht!]
\begin{subfigure}{.5\textwidth}
\centering
\includegraphics[width=\textwidth]{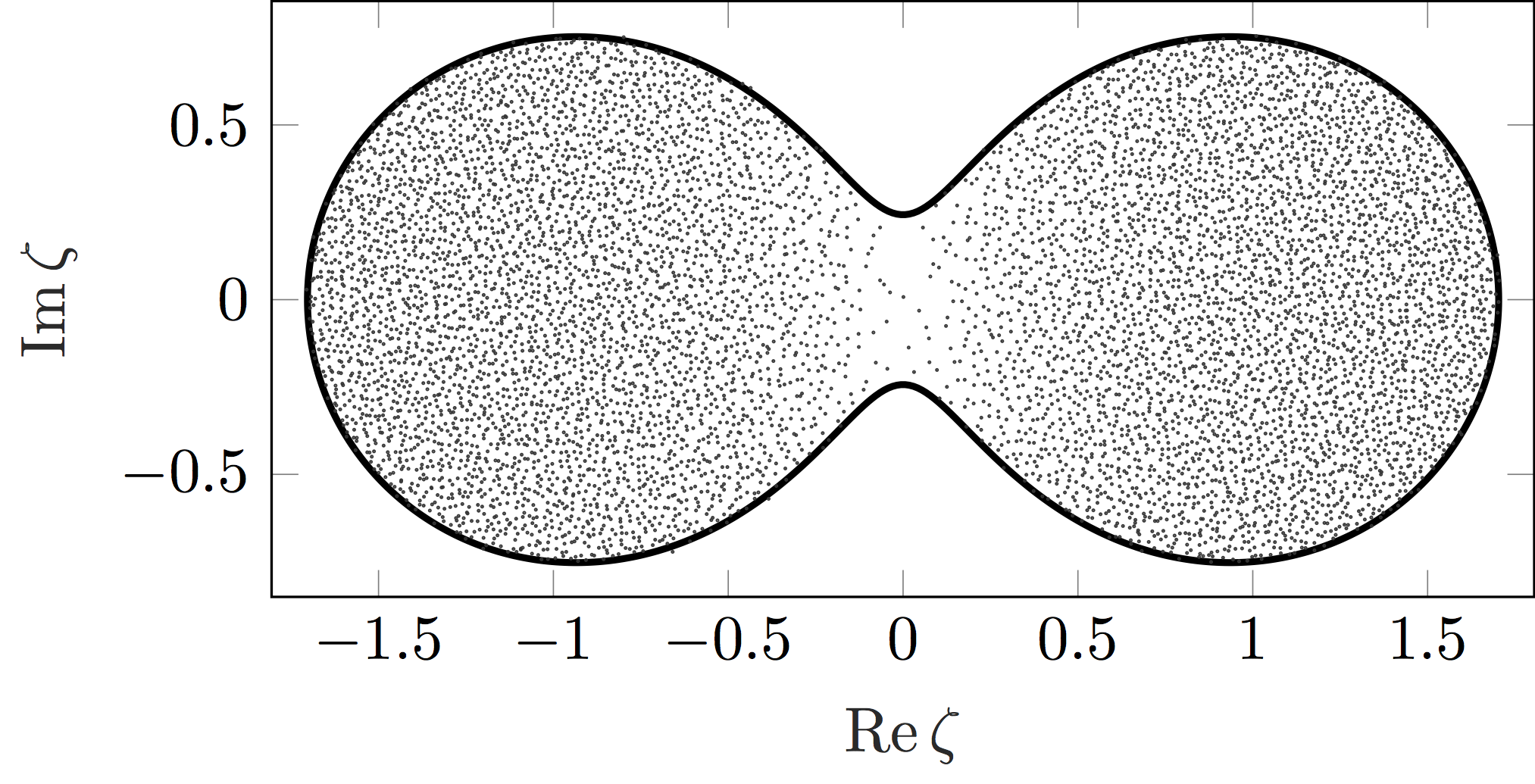}
\caption{$\{\zeta_1, \zeta_2\} = \{ \pm 0.97\}$}
\end{subfigure}
\begin{subfigure}{.5\textwidth}
\centering
\includegraphics[width=\textwidth]{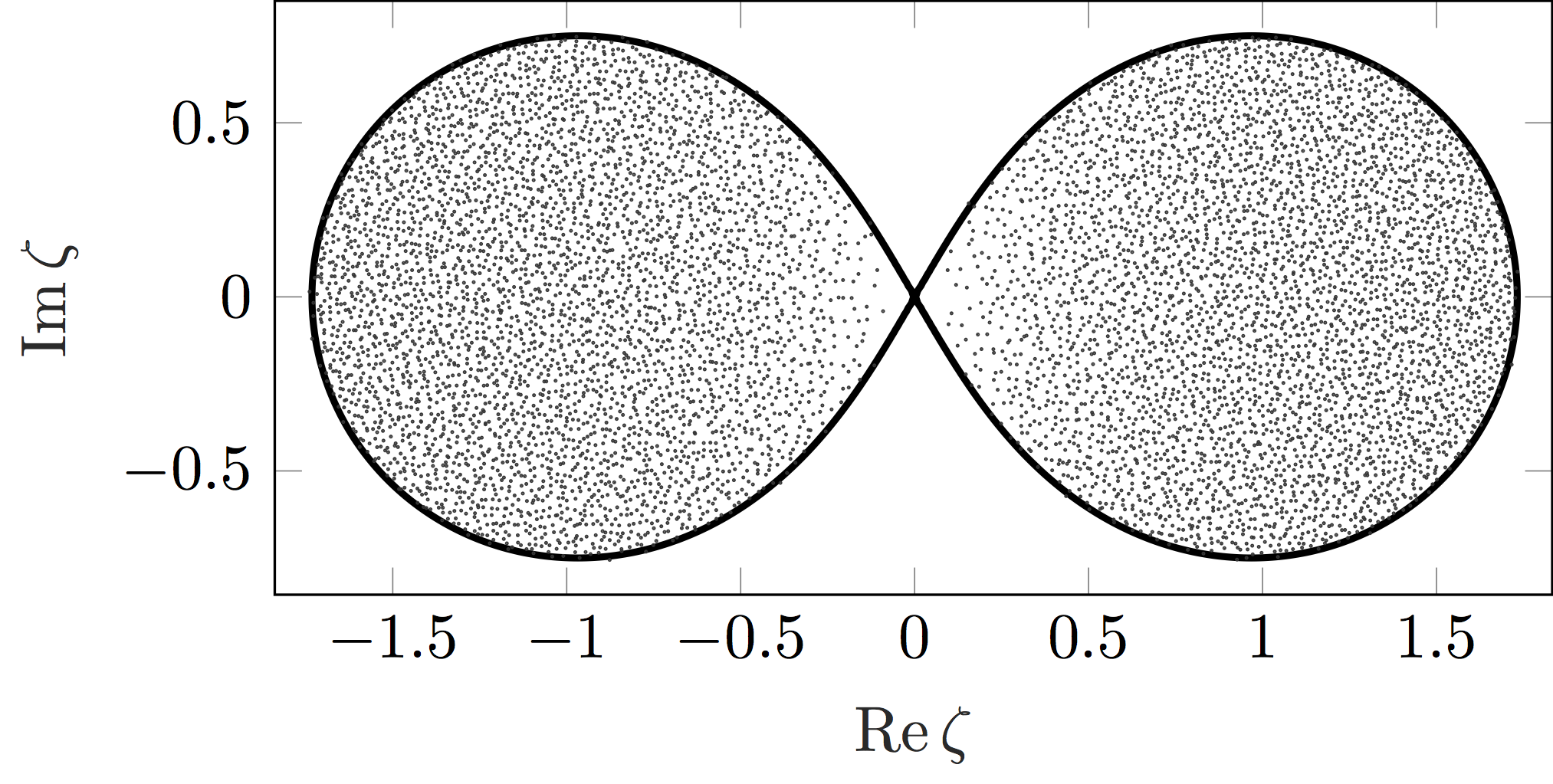}
\caption{$\{\zeta_1, \zeta_2\} = \{ \pm 1.0\}$}
\end{subfigure}

\vspace{0.5cm}

\begin{subfigure}{.5\textwidth}
\centering
\includegraphics[width=\textwidth]{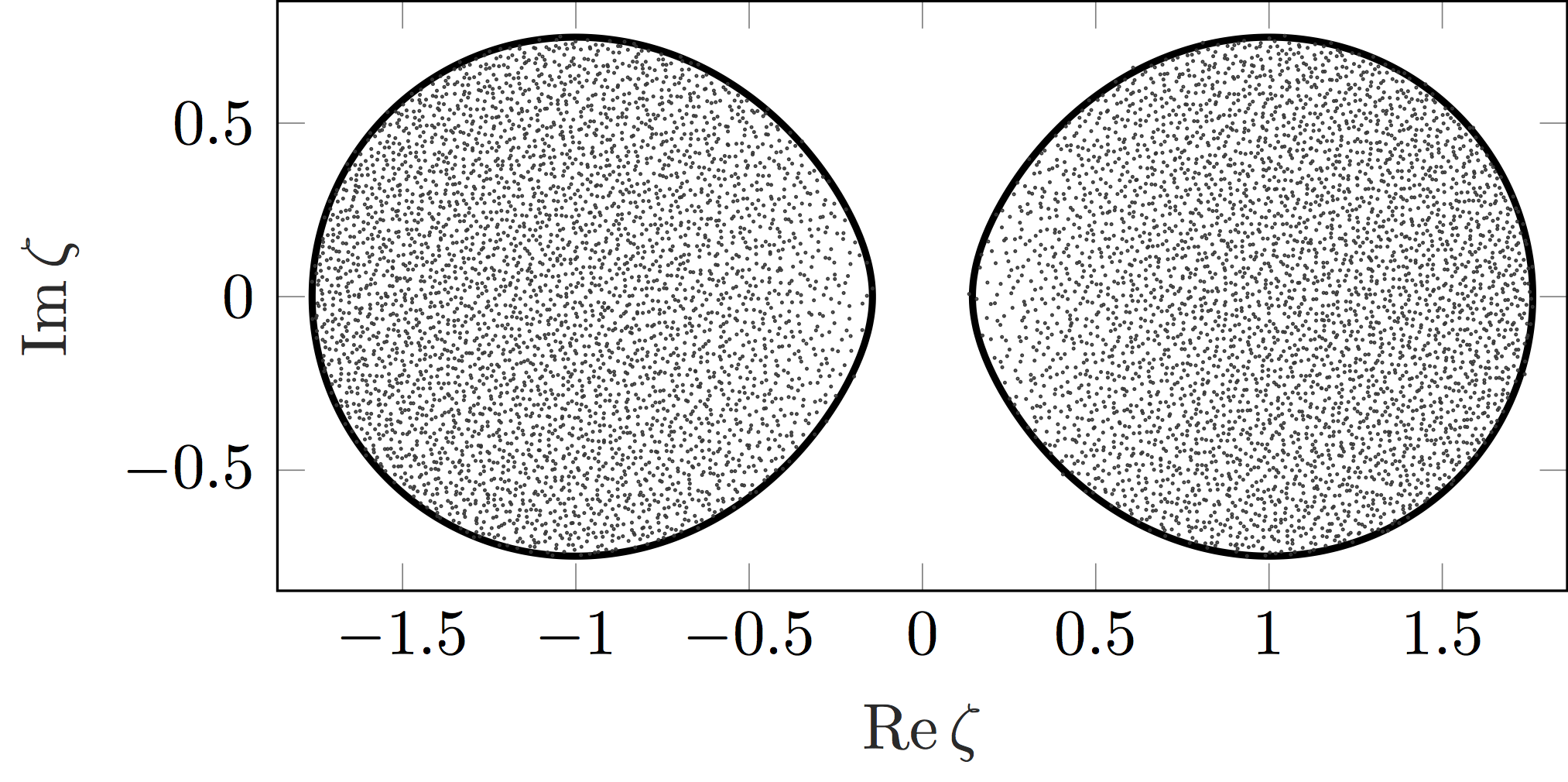}
\caption{$\{\zeta_1, \zeta_2\} = \{ \pm 1.03\}$}
\end{subfigure}
\begin{subfigure}{.5\textwidth}
\centering
\includegraphics[width=\textwidth]{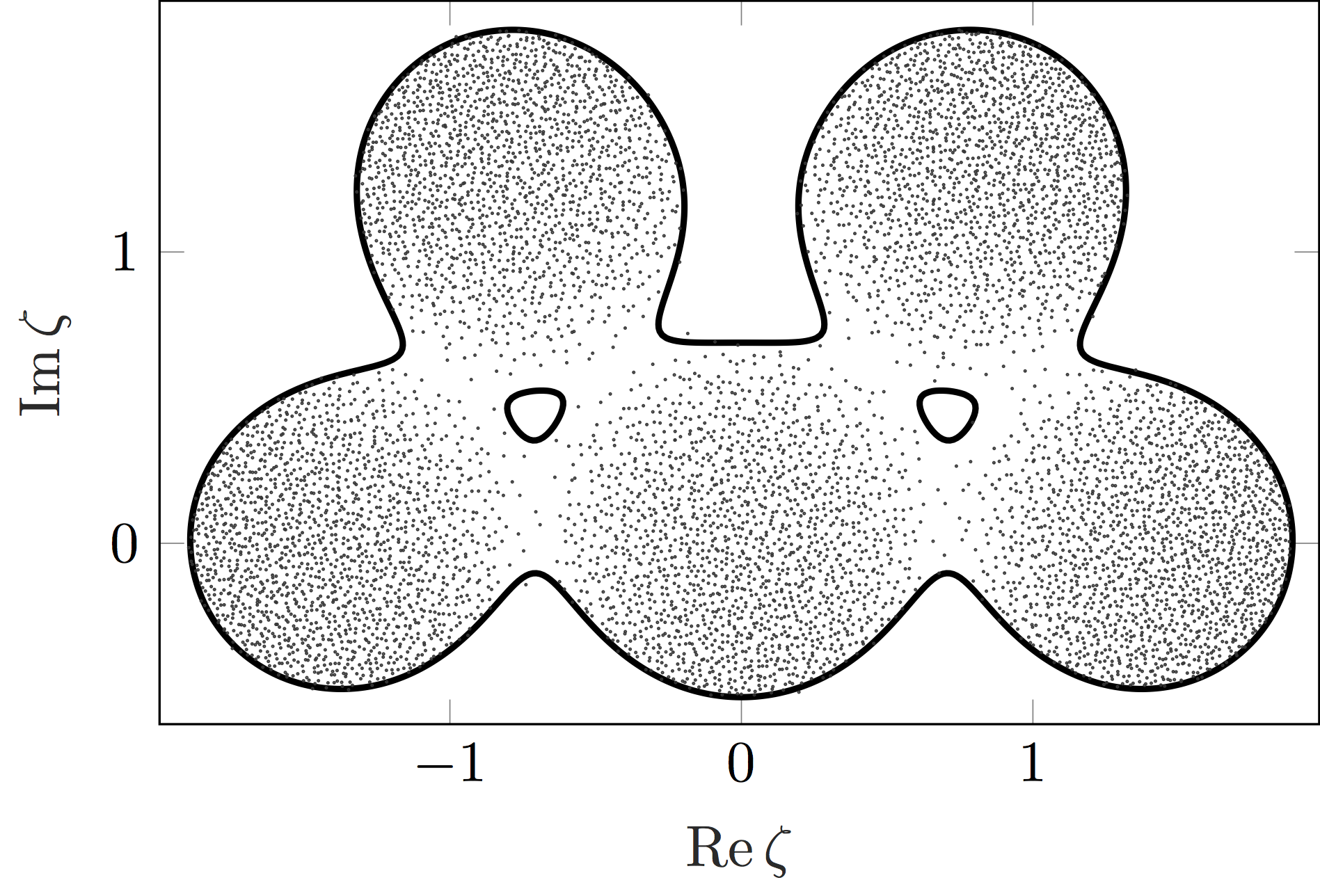}
\caption{$\{\zeta_1,\ldots, \zeta_5\} = \{0, \pm 1.4, \pm 0.8 + \ii 1.26\}$}
\end{subfigure}
\caption{Eigenvalues of sample random matrix with $N=8000$ and $\cap_{\eps>0} \supnon_\eps$.}
\label{fig:figure_numerics}
\end{figure}

The \emph{empirical density of states} of a Hermitian matrix $\Hf \in \C^{L\times L} \otimes \C^{N\times N}$  
is defined through 
\begin{equation} \label{eq:def_empirical_density_of_states}
\mu_{\Hf}(\di \tau) \defeq  \frac{1}{NL} \sum_{\lambda \in\spec(\Hf)} \delta_\lambda(\di\tau).
\end{equation}
\begin{thm}[Global law for Hermitian Kronecker matrices] \label{thm:global_law}
Fix $L\in \N$. 
For $N \in \N$, let $\Hf_N \in \C^{L\times L} \otimes \C^{N\times N}$ be a  Hermitian Kronecker  random matrix 
as in \eqref{eq:def_Xf} such that 
the bounds \eqref{eq:upper_bound_variances}~--~\eqref{eq:A_two_bounded_non_hermitian} are satisfied.
Then there exists a sequence of deterministic probability 
measures $\rho_N$ on $\R$ 
such that the difference of $\rho_N$ and the empirical spectral measure $\mu_{\Hf_N}$,
defined in \eqref{eq:def_empirical_density_of_states}, of $\Hf_N$ 
converges to zero weakly in probability, i.e., 
\begin{equation} \label{eq:global_law}
 \lim_{N\to\infty} \int_\R f(\tau)(\mu_{\Hf_N}- \rho_N)(\di\tau) = 0 
\end{equation}
for all $f \in C_0(\R)$ in probability. Here, $C_0(\R)$ denotes the continuous functions on $\R$
vanishing at infinity. 

Furthermore, there is a compact subset of $\R$ which contains the supports of all $\rho_N$. 
This compact set depends only on the model parameters. 
\end{thm}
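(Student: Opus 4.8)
The plan is to take $\rho_N$ to be the self-consistent density of states of $\Hf_N$. Since $\Hf_N$ is Hermitian its Dyson equation need not be Hermitized: it is the system $-m_j(z)^{-1} = z\id - \wt{a}_j + \Sf_j[\mf(z)]$, $j\in[N]$, for $\mf = (m_1,\dots,m_N)$ with $m_j\in\C^{L\times L}$ and $\id$ the identity of $\C^{L\times L}$, where $\Sf$ is built from the variances and structure matrices as in~\eqref{eq:sopb_l} but without the $2\times 2$ block. By the general theory of the matrix Dyson equation (Proposition~\ref{pro:exist_unique_M}; the Hermitian counterpart of Lemma~\ref{lem:Dyson_non_hermitian}), for each $z\in\Hb$ there is a unique solution with all imaginary parts $\Im m_j(z)$ positive definite, and $m_j(z) = \int_\R(\tau-z)^{-1} v_j(\di\tau)$ for positive semidefinite matrix valued measures $v_j$ on $\R$. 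I set $\rho_N(\di\tau) \defeq \frac1{NL}\sum_{j=1}^N \Tr v_j(\di\tau)$, a probability measure on $\R$ whose Stieltjes transform is $\int_\R(\tau-z)^{-1}\rho_N(\di\tau) = \frac1{NL}\sum_{j=1}^N\Tr m_j(z) =: \avg{\mf(z)}$. The theorem then reduces to two claims: (a) the supports $\supp\rho_N$ all lie in one compact subset of $\R$; and (b) for each fixed $z\in\Hb$ the random quantity $g_N(z) \defeq \frac1{NL}\Tr(\Hf_N-z)^{-1}$ converges in probability to $\avg{\mf(z)}$.

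For (a) I would solve the Dyson equation perturbatively for large $\abs z$. On the ball $\{\mf\colon\norm{\mf}\le 2/\abs z\}$ in $(\C^{L\times L})^N$ the map $\mf\mapsto\bigl((\wt{a}_j - z\id - \Sf_j[\mf])^{-1}\bigr)_{j\in[N]}$ is a contraction whenever $\abs z > \Lambda$, with a threshold $\Lambda$ bounded in terms of $\kappa_3$ and the norm $\norm{\Sf}$ of the self-energy operator (and $\norm{\Sf}\le C(L,\ell)\kappa_1\kappa_2^2$). Its fixed point is analytic on $\{\abs z > \Lambda\}$, with norm at most $2/\abs z$ there, and for $z\in\Hb$ with $\abs z>\Lambda$ it has all $\Im m_j(z)$ positive definite, so by the uniqueness in Proposition~\ref{pro:exist_unique_M} it agrees with the solution from the previous paragraph. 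Consequently each $m_j$ extends analytically to $\C\setminus[-\Lambda,\Lambda]$, which, since on $\Hb$ the function $m_j$ is the Stieltjes transform of the positive semidefinite measure $v_j$, forces $\supp v_j\subset[-\Lambda,\Lambda]$, hence $\supp\rho_N\subset[-\Lambda,\Lambda]$ for every $N$. This is the last assertion of the theorem and it also yields tightness of the family $(\rho_N)_N$.

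For (b) the key input is the local law at a fixed spectral parameter: Lemma~\ref{lem:local_law} gives $\abs{g_N(z) - \avg{\mf(z)}}\le N^{-c}$ with overwhelming probability. What makes the global law ``immediate'' is that the two structural hypotheses behind Lemma~\ref{lem:local_law} --- a uniform bound on $\mf(z)$ and invertibility of the associated stability operator --- hold unconditionally once $\Im z$ is of order one (then $\norm{m_j(z)}\le 1/\Im z$, and the stability operator is controlled by $\norm{\mf}$, $\norm{\Sf}$ and $\Im z$), so the delicate stability analysis of Section~\ref{subsec:stability_mde} is not needed for the global law; it enters only for spectral parameters close to or inside $\supp\rho_N$. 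Alternatively this fixed-$z$ estimate can be produced from scratch through the Schur complement identity for $(\Hf_N-z)^{-1}$ and concentration bounds for linear and quadratic functionals of the independent matrix entries; relative to the scalar ($L=1$) case the only genuinely new feature is that the correlations created by reusing the same $N\times N$ matrix in several blocks must be tracked, which the tensor matrix structure of Section~\ref{sec:Hermitian_Kronecker_matrices} handles automatically.

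Finally I would pass from (b) to weak convergence in probability. The functions $x\mapsto(x-z)^{-1}$ and $x\mapsto(x-\bar z)^{-1}$, $z\in\Hb$, together with the constants, generate a conjugation closed, point separating subalgebra of $C(\R\cup\{\infty\})$, hence a dense one by Stone--Weierstrass. Given $f\in C_0(\R)$ and $\delta>0$, pick a finite linear combination $f_\delta$ of such functions with $\sup_\R\abs{f - f_\delta} < \delta$; then $\absb{\int f\,\di\mu_{\Hf_N} - \int f_\delta\,\di\mu_{\Hf_N}}\le\delta$ and likewise for $\rho_N$, whereas --- the constant term of $f_\delta$ cancelling --- $\int f_\delta\,\di\mu_{\Hf_N} - \int f_\delta\,\di\rho_N$ is a fixed finite linear combination of differences $g_N(z_r) - \avg{\mf(z_r)}$ and their complex conjugates, each tending to $0$ in probability by~(b). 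Letting $N\to\infty$ and then $\delta\downarrow 0$ yields~\eqref{eq:global_law}. The main obstacle is concentrated in~(b), the concentration of $\frac1{NL}\Tr(\Hf_N-z)^{-1}$ around $\avg{\mf(z)}$; the one genuinely new difficulty there, compared with the $L=1$ case, is the bookkeeping of the block-induced correlations, while Step~(a) and the Stone--Weierstrass passage are routine and the stability input behind Lemma~\ref{lem:local_law} is needed here only in the harmless regime $\Im z\sim 1$.
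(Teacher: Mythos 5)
Your proposal is correct and follows essentially the same route as the paper: take $\rho_N$ to be the self-consistent density of states of the (non-Hermitized) Dyson equation \eqref{eq:Dyson} with $\alpha_\mu=\wt\alpha_\mu$, $\beta_\nu=\wt\beta_\nu+\wt\gamma_\nu^*$, $a_j=\wt a_j$, deduce convergence of the normalized trace of the resolvent at spectral parameters with $\im z$ bounded below from the averaged local law (the paper cites \eqref{eq:local_law_outside_average}; your observation that boundedness of $\mf$ and invertibility of $\Lf$ are automatic there is exactly why this regime is harmless), and pass to weak convergence by the standard Stieltjes-transform density argument. The only cosmetic difference is the compact-support step, where you rerun a contraction argument for large $\abs z$ instead of simply quoting \eqref{eq:support_V} together with $\max_i\abs{\wt a_i}\le\kappa_3$ and $\norm{\Sf}\lesssim 1$; both give the claim.
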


Theorem~\ref{thm:global_law} is proven in Appendix \ref{app:proof_local_law}. 
The measure $\rho_N$, the self-consistent density of states, can be obtained by solving the 
corresponding Dyson equation, see Definition \ref{def:self-consistent_density_of_states} later.
If the function $f$ is sufficiently regular then our proof combined with the Helffer-Sjöstrand formula 
yields an effective convergence rate of order $N^{-\delta}$ in \eqref{eq:global_law}.

\section{Solution and stability of the Dyson equation} \label{sec:Dyson_equation}

The general matrix Dyson equation (MDE) has been extensively studied in 
\cite{AjankiCorrelated}, but under conditions that exclude general Kronecker random matrices. 
Here, we relax these conditions and show how to extend some key results of \cite{AjankiCorrelated} 
to our current setup. 
Our analysis of the MDE on the space of $n\times n$ matrices, $\tenspace = \C^{n\times n}$, will then be applied 
to \eqref{eq:Dyson_non_Hermitian} with $n=2LN=KN$. 
On $\tenspace = \C^{n\times n}$, we use the norms as defined in Subsection \ref{subsec:notation} and 
 require the pair $(\Af, \sopb)$ to have the following properties:
\begin{definition}[Data pair] \label{def:Data pair} We call $(\Af, \sopb)$ a \emph{data pair} if 
\begin{itemize}
\item The imaginary part $\im \Af = \frac{1}{2\ii}(\Af-\Af^*)$ of the matrix $\Af \in \C^{n \times n}$  is negative semidefinite.
\item The linear operator $\sopb: \C^{n\times n} \to \C^{n\times n}$ is self-adjoint with respect to the scalar product
\[
\scalar{\bs{R}}{\bs{T}}\,\defeq \,  \frac{1}{n} \Tr[\bs{R}^* \bs{T}]\,,
\]
and preserves the cone of positive semidefinite matrices, i.e. it is positivity preserving. 
\end{itemize}
\end{definition}

For any data pair $(\Af, \sopb)$,
the MDE then takes the form
\begin{equation} \label{eq:Dyson_matrix}
- \Mf^{-1}(z) = z\id  - \Af + \sopb[\Mf(z)], \quad z \in \Hb,
\end{equation}
for a solution matrix $\Mf(z) \in \C^{n \times n}$. It was shown in this generality that 
the MDE, \eqref{eq:Dyson_matrix}, has a unique solution under the constraint that the imaginary part 
$\Im \Mf(z) \defeq (\Mf(z)-\Mf(z)^*)/(2\ii)$ is positive definite~\cite{Helton01012007}. 
We remark that $\Im \Af$ being negative semidefinite is the most general condition for which our analysis is applicable. 
Furthermore, in \cite{AjankiCorrelated}, properties of the solution of \eqref{eq:Dyson_matrix} and the stability of \eqref{eq:Dyson_matrix} against small perturbations 
were studied in the general setup with Hermitian $\bs{A}$ and under the so-called \emph{flatness} assumption, 
\bels{matrix flatness}{
 \frac{c}{n} \Tr(\Rf) \leq \sopb[\Rf] \leq \frac{C}{n} \Tr(\Rf)\,, 
 }
 for all positive definite $\Rf \in\C^{n \times n}$ with some constants $C > c>0$. Within Section~\ref{sec:Dyson_equation} we will generalize certain results from \cite{AjankiCorrelated} by dropping the flatness assumption \eqref{matrix flatness} and the Hermiticity of $\bs{A}$. 
The results in this section, apart from \eqref{eq:matrix support_A} below, 
follow by combining and modifying several arguments from \cite{AjankiCorrelated}.
We will only explain the main steps and refer to \cite{AjankiCorrelated} for details.
At the end of the section we translate these general results back to the setup of Kronecker matrices with the associated Dyson equation~\eqref{eq:Dyson_non_Hermitian}. 

\subsection{Solution of the Dyson equation}
\label{subsec:Solution of the Dyson equation}

According to Proposition~2.1 in \cite{AjankiCorrelated} the solution $\Mf$ to \eqref{eq:Dyson_matrix} has a Stieltjes transform representation 
\bels{matrix Stieltjes transform rep}{
\Mf(z) \,=\, \int_\R \frac{\bs{V}(\dd \tau)}{\tau-z}\,, \qquad z \in \mathbb{H}\,,
}
where $\bs{V}$ is a compactly supported measure on $\R$ with values in positive semidefinite $n \times n$-matrices such that $\bs{V}(\R)=\id$, provided $\Af$ is Hermitian. The following lemma strengthens the conclusion about the support properties for this measure compared to Proposition~2.1 in \cite{AjankiCorrelated}.

\begin{lemma} \label{lmm:matrix support of V} Let $(\Af,\sopb)$ be a data pair as in Definition~\ref{def:Data pair} and $\Mf: \mathbb{H} \to \C^{n \times n}$ be the unique solution to \eqref{eq:Dyson_matrix} with positive definite imaginary part. Then
\begin{enumerate}
\item[(i)] There is a unique measure $\bs{V}$ on $\R$ with values in positive semidefinite matrices and $\bs{V}(\R)=\id$ such that \eqref{matrix Stieltjes transform rep} holds true.
\item[(ii)] If $\Af$ is Hermitian, then 
\begin{subequations} 
\begin{align}
\label{eq:matrix support_V}
\supp \bs{V} & \, \subset \, \spec \bs{A} + [-2 \norm{\sopb}^{1/2}, 2 \norm{\sopb}^{1/2}], \\
\label{eq:matrix support_A}
\spec \bs{A} & \, \subset \, \supp \bs{V} + [-\norm{\sopb}^{1/2}, \norm{\sopb}^{1/2}].
\end{align}
\end{subequations}
\end{enumerate}
\end{lemma}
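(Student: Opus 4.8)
The plan is to obtain part~(i) from the operator‑valued Herglotz representation of $\Mf$ together with the behaviour of $\Mf$ at $\ii\infty$, and part~(ii) by combining a sharp bound on Stieltjes transforms of positive matrix‑valued measures with a contraction‑mapping solution of the MDE away from $\spec\Af$. Throughout I will use the facts imported from \cite{AjankiCorrelated}: that $\Mf$ is holomorphic on $\Hb$, and that for Hermitian $\Af$ the measure $\bs V$ in \eqref{matrix Stieltjes transform rep} exists and is compactly supported. For part~(i): since $\Mf\colon\Hb\to\C^{n\times n}$ is holomorphic with $\Im\Mf>0$, the operator‑valued Nevanlinna representation gives
\[
\Mf(z)=\bs\alpha+\bs\beta z+\int_\R\Bigl(\frac1{\tau-z}-\frac{\tau}{1+\tau^2}\Bigr)\bs V(\dd\tau),
\]
with $\bs\alpha=\bs\alpha^*$, $\bs\beta\geq0$, and $\bs V\geq0$ a matrix‑valued measure with $\int(1+\tau^2)^{-1}\bs V(\dd\tau)<\infty$. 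Taking the imaginary part of \eqref{eq:Dyson_matrix} at $z=\ii\eta$ and using $\im\Af\leq0$ together with positivity of $\sopb$ yields $\Im(-\Mf(\ii\eta)^{-1})\geq\eta\id$, hence $\norm{\Mf(\ii\eta)}\leq\eta^{-1}$; feeding this back into \eqref{eq:Dyson_matrix} gives $-\ii\eta\Mf(\ii\eta)\to\id$ as $\eta\to\infty$. The bound $\norm{\Mf(\ii\eta)}\leq\eta^{-1}$ forces $\bs\beta=0$; monotone convergence in $\eta\Im\Mf(\ii\eta)=\int\frac{\eta^2}{\tau^2+\eta^2}\bs V(\dd\tau)$, combined with $-\ii\eta\Mf(\ii\eta)\to\id$, gives $\bs V(\R)=\id$ (so $\bs V$ is finite, $\int\Tr\bs V=n$); and letting $\eta\to\infty$ in the representation fixes $\bs\alpha=\int\frac{\tau}{1+\tau^2}\bs V(\dd\tau)$, collapsing the formula to \eqref{matrix Stieltjes transform rep}. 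Uniqueness of $\bs V$ is Stieltjes inversion.

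For part~(ii) I first record the bound $\norm{\Mf(z)}\leq\dist(z,\supp\bs V)^{-1}$ for $z\notin\supp\bs V$: writing $\bs V(\dd\tau)=W(\tau)(\Tr\bs V)(\dd\tau)$ with $W\geq0$ and $\Tr W\equiv1$, Cauchy--Schwarz gives $\absb{\scalar{u}{\Mf(z)v}}\leq\int\sqrt{\scalar{u}{Wu}\,\scalar{v}{Wv}}\,\abs{\tau-z}^{-1}(\Tr\bs V)(\dd\tau)\leq\dist(z,\supp\bs V)^{-1}$ for unit vectors $u,v$, and for real $z$ Hermiticity of $\Mf(z)$ turns the numerical radius into the norm. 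For \eqref{eq:matrix support_A}, fix real $E_0$ with $\dist(E_0,\supp\bs V)>\norm{\sopb}^{1/2}$. Then $\Mf$ is holomorphic and Hermitian‑valued near $E_0$, and since $\Mf(z)^{-1}=-(z\id-\Af+\sopb[\Mf(z)])$ also extends holomorphically across $E_0$, the matrix $\Mf(E_0)$ is invertible; from \eqref{eq:Dyson_matrix} at $z=E_0$ we get that $E_0\id-\Af=-\Mf(E_0)^{-1}-\sopb[\Mf(E_0)]$ is Hermitian with $\smin\geq\smin(\Mf(E_0)^{-1})-\norm{\sopb}\norm{\Mf(E_0)}\geq\dist(E_0,\supp\bs V)-\norm{\sopb}^{1/2}>0$, so $E_0\notin\spec\Af$. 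This is \eqref{eq:matrix support_A}.

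For \eqref{eq:matrix support_V} I construct a holomorphic solution of the MDE on $\Omega\defeq\{z\in\C\colon\dist(z,\spec\Af)>2\norm{\sopb}^{1/2}\}$. Writing $\delta(z)\defeq\dist(z,\spec\Af)$ and $r(z)\defeq2\bigl(\delta(z)+\sqrt{\delta(z)^2-4\norm{\sopb}}\bigr)^{-1}$ (the smaller root of $\norm{\sopb}\rho^2-\delta(z)\rho+1=0$), the map $\Phi_z(\bs R)\defeq-(z\id-\Af+\sopb[\bs R])^{-1}$ sends the closed ball $\{\norm{\bs R}\leq r(z)\}$ into itself and is a contraction there with Lipschitz constant $\leq\norm{\sopb}r(z)^2<1$, provided $\delta(z)>2\norm{\sopb}^{1/2}$. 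Its fixed point $\wh\Mf(z)$ depends holomorphically on $z$ (analytic implicit function theorem, $\Id-\wh\Mf\sopb[\genarg]\wh\Mf$ being invertible), solves \eqref{eq:Dyson_matrix} on $\Omega$, and is Hermitian‑valued on $\Omega\cap\R$ because $\Phi_z$ preserves Hermitian matrices for real $z$. For $\Im z>2\norm{\sopb}^{1/2}$ both $\Mf(z)$ and $\wh\Mf(z)$ have norm $<\norm{\sopb}^{-1/2}$, and two solutions of such norm coincide since their difference $\bs D$ obeys $\bs D=\wh\Mf\,\sopb[\bs D]\,\Mf$. As $\Omega\cap\Hb$ is connected ($\Hb$ with finitely many half‑discs removed), the identity theorem gives $\Mf=\wh\Mf$ on all of $\Omega\cap\Hb$, so $\Mf$ extends holomorphically to $\Omega$; for real $E_0\in\Omega$ this forces $\Im\Mf(E_0+\ii\eta)\to\Im\wh\Mf(E_0)=0$ locally uniformly, whence $\bs V=0$ near $E_0$ by Stieltjes inversion. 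Thus $\supp\bs V\subset\{E\colon\dist(E,\spec\Af)\leq2\norm{\sopb}^{1/2}\}$, which is \eqref{eq:matrix support_V}.

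The delicate step is the identification $\Mf=\wh\Mf$: one has to check connectedness of $\Omega\cap\Hb$, that for large $\eta$ the norm bounds genuinely drop $\Mf(\ii\eta)$ and $\wh\Mf(\ii\eta)$ below the uniqueness threshold $\norm{\sopb}^{-1/2}$, and that the radius $r(z)$ makes $\Phi_z$ a contraction exactly when $\dist(z,\spec\Af)>2\norm{\sopb}^{1/2}$ --- this discriminant condition is precisely what produces the constant $2\norm{\sopb}^{1/2}$ in \eqref{eq:matrix support_V}. The remaining ingredients (the Herglotz representation, holomorphy of $\Mf$, Stieltjes inversion, compactness of $\supp\bs V$) are routine or already available in \cite{AjankiCorrelated}.
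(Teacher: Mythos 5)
Your argument is correct; parts (i) and \eqref{eq:matrix support_A} essentially coincide with the paper's treatment (the paper imports (i) from Proposition~2.1 of \cite{AjankiCorrelated}, and for \eqref{eq:matrix support_A} it performs the same inversion of $\Af-z\id=\Mf^{-1}(\id+\Mf\sopb[\Mf])$ under $\normtwo{\Mf}\le \dist(z,\supp\bs V)^{-1}$, only at $z\in\Hb$ with $\Im z\downarrow 0$ rather than directly at the real point $E_0$). The genuine divergence is in \eqref{eq:matrix support_V}. The paper never leaves the given solution on $\Hb$: it shows that $\normtwo{\Mf(z)}$ cannot take values in the interval $\bigl(2/\smin(z-\Af),\,\smin(z-\Af)/(2\norm{\sopb})\bigr)$, propagates the lower branch $\normtwo{\Mf}\le 2/\dist(z,\spec\Af)$ from large $\Im z$ by continuity and connectedness of $\{w:\dist(w,\spec\Af)>2\norm{\sopb}^{1/2}\}$, and then solves the imaginary part of the MDE for $\Im\Mf$ to get the explicit bound $\normtwo{\Im\Mf}\le 4\eta/(\dist(z,\spec\Af)^2-4\norm{\sopb})$, which vanishes as $\eta\downarrow0$. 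You instead construct a second, holomorphic and Hermitian-valued solution $\wh\Mf$ on $\Omega$ by contraction mapping and identify it with $\Mf$ through a small-norm uniqueness lemma plus the identity theorem. The two proofs are built on the same quadratic self-consistency: your invariant ball of radius $r(z)$ (the smaller root of $\norm{\sopb}\rho^2-\delta\rho+1=0$) is exactly the complement of the paper's forbidden interval, and both need connectedness to propagate from $\Im z$ large --- so the constant $2\norm{\sopb}^{1/2}$ arises from the same discriminant in both cases. What each buys: the paper's route is shorter and yields a quantitative, linear-in-$\eta$ decay of $\Im\Mf$ with explicit constants (useful later for stability estimates), while yours costs the extra uniqueness and analytic-continuation steps but delivers the stronger qualitative conclusion that $\Mf$ extends holomorphically, Hermitian-valued, across the gap in the real axis. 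All the individual steps you flag as delicate (connectedness of $\Omega\cap\Hb$, the norms dropping below $\norm{\sopb}^{-1/2}$ for $\Im z>2\norm{\sopb}^{1/2}$, the contraction threshold) check out.
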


\begin{proof}[Proof of Lemma \ref{lmm:matrix support of V}] The representation \eqref{matrix Stieltjes transform rep} follows exactly as in the proof of Proposition~2.1 in \cite{AjankiCorrelated}
even for $\Af$ with negative semidefinite imaginary part.
We now prove \eqref{eq:matrix support_V} motivated by the same proof in \cite{AjankiCorrelated}. For a matrix $\Rf \in \C^{n \times n}$, its smallest singular value is denoted by $\smin(\Rf)$. 
Note that $\smin(z-\Af) = \dist(z, \spec \Af)$ since $\Af$ is Hermitian. In the following, we fix $z \in\Hb$ such that $\dist(z,\spec\Af) = \smin(z-\Af) > 2\norm{\sopb}^{1/2}$.

Under the condition $\normtwo{\Mf(z)} \leq \smin(z-\Af)/(2\norm{\sopb})$, we obtain from \eqref{eq:Dyson_matrix} 
\begin{equation} \label{eq:local_spectrum_aux1}
\normtwo{\Mf(z)}= \frac{1}{\smin(z-\Af + \sopb[\Mf(z)])} \leq \frac{1}{\smin(z-\Af) - \norm{\sopb}\normtwo{\Mf(z)}} 
\leq \frac{2}{\dist(z,\spec\Af)}.   
\end{equation}
Therefore, using $\smin(z-\Af) > 2\norm{\sopb}^{1/2}$, we find a gap in the values $\normtwo{\Mf(z)}$ can achieve
\[ \normtwo{\Mf(z)} \notin \Big(\frac{2}{\smin(z-\Af)}, \frac{\smin(z-\Af)}{2\norm{\sopb}}   \Big). \]
For large values of $\eta= \Im z$, $\normtwo{\Mf(z)}$ is smaller than the lower bound of this interval. 
Thus, since $\normtwo{\Mf(z)}$ is a continuous function of $z$ and the set $\{ w\in\Hb\colon \dist(w, \spec \Af ) > 2\norm{\sopb}^{1/2}\}$ is path-connected,
we conclude that \eqref{eq:local_spectrum_aux1} holds true for all $z \in \Hb$ satisfying $\dist(z,\spec \Af)> 2\norm{\sopb}^{1/2}$. 

We take the imaginary part of \eqref{eq:Dyson_matrix} and use $\Af=\Af^*$ to obtain $\Im \Mf = \eta \Mf^* \Mf + \Mf^* \sopb[\Im\Mf] \Mf.$
Solving this relation for $\Im \Mf$ and estimating its norm yields 
\[ \normtwo{\Im \Mf} \leq \frac{\eta\normtwo{\Mf}^2}{1- \norm{\sopb} \normtwo{\Mf}^2} \leq \frac{4\eta}{\dist(z,\spec\Af)^2 - 4 \norm{\sopb}}. \]
Here, we employed $\normtwo{\Mf}^2 \norm{\sopb} <1$ by \eqref{eq:local_spectrum_aux1} and $\dist(z,\spec \Af)> 2\norm{\sopb}^{1/2}$.
Hence, $\Im \Mf$ converges to zero locally uniformly on the set $\big\{ z\in\Hb\colon \dist(z, \spec\Af) > 2 \norm{\sopb}^{1/2}\big\}$ for $\eta \downarrow 0$. 
Therefore, $E \notin \supp \bs{V}$ if $\dist(E, \spec\Af) > 2 \norm{\sopb}^{1/2}$. 
This concludes the proof of~\eqref{eq:matrix support_V}.

We now prove \eqref{eq:matrix support_A}.
From \eqref{eq:Dyson_matrix}, we obtain 
\begin{equation} \label{eq:spec_A_supp_rho_aux1}
 \Af-z\id = \Mf^{-1}( \id + \Mf \sopb[\Mf]) 
\end{equation}
for $z \in\Hb$. Since $\bs{V}(\R) = 1$, we have
\begin{equation} \label{eq:spec_A_supp_rho_aux2}
 \normtwo{\Mf} \leq \frac{1}{\dist(z,\supp \bs{V})}. 
\end{equation}
Therefore, taking the inverse in \eqref{eq:spec_A_supp_rho_aux1} and applying \eqref{eq:spec_A_supp_rho_aux2} yield 
\begin{equation} \label{eq:spec_A_supp_rho_aux3}
 \normtwo{(\Af-z\id)^{-1}} \leq \frac{1}{\dist(z,\supp \bs{V})(1-\norm{\sopb}\dist(z, \supp \bs{V})^{-2})} 
\end{equation}
for all $z \in \Hb$ satisfying $\dist(z, \supp\bs{V})^2 > \norm{\sopb}$. Taking $\Im z \downarrow 0$ in \eqref{eq:spec_A_supp_rho_aux3}, we see that the matrix $\Af - E\id$ 
is invertible for all $E \in\R$ satisfying $\dist(E, \supp\bs{V})^2 > \norm{\sopb}$, showing~\eqref{eq:matrix support_A}.
\end{proof}

In accordance with Definition~2.3 in \cite{AjankiCorrelated} we define the self-consistent density of states as the unique measure whose Stieltjes transform is $n^{-1} \Tr \Mf$.

\begin{defi}[Self-consistent density of states] \label{def:self-consistent_density_of_states}
The measure 
\begin{equation} \label{eq:matrix def_self_cons_dens_states}
 \rho(\di \tau) \defeq \frac{1}{n} \Tr \Vf(\di \tau)\,=\, \avg{\Vf(\di \tau)}
\end{equation}
is called the \emph{self-consistent density of states}. Clearly, $\supmeas = \supp \Vf$.  For the following lemma, we also 
 define the harmonic extension of the self-consistent density of states $\rho\colon \Hb \to \R_+$ through
\begin{equation} \label{eq:def_rho}
 \rho (z) \defeq \frac{1}{\pi}\avg{\Im \Mf(z)}. 
\end{equation}

\end{defi}

In the following we will use the short hand notation 
\[\disvz \defeq \dist(z,\supmeas)\,.\]

\begin{lem}[Bounds on $\Mf$ and $\Mf^{-1}$] \label{lem:bounds_Mf}
Let $(\Af, \sopb)$ be a data pair as in Definition \ref{def:Data pair}. 
\begin{enumerate}[(i)]
\item For $z\in\Hb$, we have the bounds
\begin{subequations} 
\begin{align}
\normtwo{\Mf} \leq &\, \frac{1}{\disvz},  \label{eq:Mf_upper_bound_eta} \\
(\Im z)\normtwo{\Mf^{-1}}^{-2} \id \leq \Im \Mf \leq  &\,\frac{\Im z}{\disvz[2]} \id, \label{eq:Im_Mf_upper_bound_away_support}  \\
 \normtwo{\Mf^{-1}} \le & \, \abs{z} + \normtwo{\Af} +  \norm{\sopb}\normtwo{\Mf}.  \label{eq:bound_Mf_inverse}
\end{align}
\end{subequations}
\item For $z\in\Hb$, we have the bound
\begin{equation} \label{eq:bound_rho_dist_support}
 \rho(z) \leq \frac{\Im z}{\pi\disvz[2]}.
\end{equation}
\end{enumerate}
\end{lem}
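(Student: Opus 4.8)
The plan is to derive all bounds directly from the MDE \eqref{eq:Dyson_matrix} and the Stieltjes transform representation \eqref{matrix Stieltjes transform rep}, using only elementary operator inequalities. For \eqref{eq:Mf_upper_bound_eta}, since $\Mf(z) = \int_\R \Vf(\di\tau)/(\tau-z)$ with $\Vf$ a positive-semidefinite-matrix-valued measure of total mass $\id$, one estimates $\normtwo{\Mf(z)} \le \int_\R \normtwo{\Vf(\di\tau)}/\abs{\tau-z} \le \sup_{\tau\in\supmeas} \abs{\tau-z}^{-1} = 1/\disvz$; here $\normtwo{\Vf(\R)}=1$ controls the total mass. (Strictly, one writes $\Mf(z) = \int (\tau - \re z)/\abs{\tau-z}^2\,\Vf(\di\tau) + \ii \im z\int \abs{\tau-z}^{-2}\Vf(\di\tau)$ and bounds each piece, or just uses that for a fixed unit vector $\uf$, $\scalar{\uf}{\Mf(z)\uf}$ is a scalar Stieltjes transform of the positive measure $\scalar{\uf}{\Vf(\di\tau)\uf}$ of mass $\le 1$, hence has modulus at most $1/\disvz$, and the operator norm is the supremum over such $\uf$.)

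For the upper bound in \eqref{eq:Im_Mf_upper_bound_away_support}, I would take the imaginary part of \eqref{eq:Dyson_matrix}. Writing $z = E + \ii\eta$ and using that $\Im$ of the left side is $\Im(-\Mf^{-1}) = \Mf^{-*}(\Im\Mf)\Mf^{-1}$ (a standard identity, since $\Im(W^{-1}) = -W^{-*}(\Im W)W^{-1}$), while the imaginary part of the right side is $\eta\id + \Im\Af + \sopb[\Im\Mf]$, one gets
\[
\Mf^{-*}(\Im\Mf)\Mf^{-1} \,=\, \eta\id + \sopb[\Im\Mf] - \Im\Af \,\ge\, \eta\id,
\]
using $\Im\Af \le 0$ and positivity preservation of $\sopb$ together with $\Im\Mf > 0$. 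Conjugating by $\Mf$ gives $\Im\Mf \ge \eta\, \Mf^*\Mf$, hence $\Im\Mf \ge \eta\,\normtwo{\Mf^{-1}}^{-2}\id$ since $\Mf^*\Mf \ge \smin(\Mf)^2\id = \normtwo{\Mf^{-1}}^{-2}\id$; this is the lower bound in \eqref{eq:Im_Mf_upper_bound_away_support}. For the upper bound, note $\Im\Mf(z) = \int \eta\,\abs{\tau-z}^{-2}\,\Vf(\di\tau)$ directly from \eqref{matrix Stieltjes transform rep}, and $\abs{\tau-z}^{-2} \le \disvz[-2]$ uniformly on $\supmeas = \supp\Vf$, together with $\Vf(\R) = \id$. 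Then \eqref{eq:bound_Mf_inverse} is immediate from the triangle inequality applied to $-\Mf^{-1} = z\id - \Af + \sopb[\Mf]$, bounding $\normtwo{\sopb[\Mf]} \le \norm{\sopb}\,\normtwo{\Mf}$ by definition of the operator norm of $\sopb$. Finally, part (ii), inequality \eqref{eq:bound_rho_dist_support}, follows by applying the normalized trace $\avg{\genarg}$ to the upper bound in \eqref{eq:Im_Mf_upper_bound_away_support}: $\rho(z) = \pi^{-1}\avg{\Im\Mf(z)} \le \pi^{-1}\avg{(\Im z)\disvz[-2]\id} = (\Im z)/(\pi\disvz[2])$.

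The only genuinely delicate point is the identity $\Im(-\Mf^{-1}) = \Mf^{-*}(\Im\Mf)\Mf^{-1}$ and making sure the resulting operator inequality is read correctly (positive definiteness of $\Im\Mf$, established in the cited existence result, is what makes everything well-defined, and $\sopb[\Im\Mf]\ge 0$ is where positivity preservation enters). A small subtlety is that in \eqref{eq:bound_rho_dist_support} and the upper bound of \eqref{eq:Im_Mf_upper_bound_away_support} one needs $\supp\Vf = \supmeas$, which holds by Lemma~\ref{lmm:matrix support of V}(i) and Definition~\ref{def:self-consistent_density_of_states}; all the estimates are vacuous (or trivially infinite on the right) when $z$ is at zero distance from $\supmeas$, so there is nothing to check there. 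Everything else is a one-line operator-norm or trace estimate, so I do not anticipate any real obstacle beyond bookkeeping.
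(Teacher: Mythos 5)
Your proof follows essentially the same route as the paper's: \eqref{eq:Mf_upper_bound_eta} and the upper bound in \eqref{eq:Im_Mf_upper_bound_away_support} from the Stieltjes representation \eqref{matrix Stieltjes transform rep} with $\Vf(\R)=\id$, the lower bound in \eqref{eq:Im_Mf_upper_bound_away_support} by taking the imaginary part of \eqref{eq:Dyson_matrix} (your identity $\Im(-\Mf^{-1})=\Mf^{-*}(\Im\Mf)\Mf^{-1}$ conjugated by $\Mf$ is exactly the paper's $\Im\Mf=\eta\Mf^*\Mf-\Mf^*(\Im\Af)\Mf+\Mf^*\sopb[\Im\Mf]\Mf\ge\eta\Mf^*\Mf$), then $\Mf^*\Mf\ge\normtwo{\Mf^{-1}}^{-2}\id$, the triangle inequality for \eqref{eq:bound_Mf_inverse}, and the normalized trace for \eqref{eq:bound_rho_dist_support}. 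The one imprecision is your justification of \eqref{eq:Mf_upper_bound_eta}: for a non-normal matrix the operator norm is \emph{not} the supremum of $\abs{\scalar{\uf}{\Mf\uf}}$ over unit vectors (that is the numerical radius, which can be smaller by a factor of two); instead bound $\abs{\scalar{\uf}{\Mf(z)\vf}}$ for two unit vectors, using the Cauchy--Schwarz inequality $\abs{\scalar{\uf}{\Vf(\di\tau)\vf}}\le\scalar{\uf}{\Vf(\di\tau)\uf}^{1/2}\scalar{\vf}{\Vf(\di\tau)\vf}^{1/2}$ for the positive-semidefinite-matrix-valued measure and $\Vf(\R)=\id$, which gives $\int_\R\abs{\scalar{\uf}{\Vf(\di\tau)\vf}}\le 1$ and hence the claimed bound.
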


\begin{proof}
Using \eqref{matrix Stieltjes transform rep} immediately 
yields \eqref{eq:Mf_upper_bound_eta} and the upper bound in \eqref{eq:Im_Mf_upper_bound_away_support} since  $\bs{V}(\R) = \id$. 
With $\eta = \Im z$ and taking the imaginary part of \eqref{eq:Dyson_matrix}, we obtain 
\[ \Im \Mf = \eta \Mf^* \Mf - \Mf^*(\Im \Af)\Mf+ \Mf^* \sopb[\Im \Mf] \Mf \geq \eta \Mf^* \Mf\]
as $\Im \Af \leq 0$, $\Im \Mf\geq 0$ and $\sopb$ is positivity preserving. Since $\Rf^*  \Rf \geq \normtwo{\Rf^{-1}}^{-2}\id$ for any $\Rf \in\C^{n \times n}$ 
the lower bound in \eqref{eq:Im_Mf_upper_bound_away_support} follows. 
From \eqref{eq:Dyson_matrix}, we obtain \eqref{eq:bound_Mf_inverse}. Since $\rho(z) = \pi^{-1} \avg{\Im \Mf(z)}$ the upper bound in \eqref{eq:Im_Mf_upper_bound_away_support} implies~\eqref{eq:bound_rho_dist_support}. 
\end{proof}

\subsection{Stability of the Dyson equation} \label{subsec:stability_mde} 

The goal of studying the stability of the Dyson equation in matrix form, \eqref{eq:Dyson_matrix}, is to show that if some $\Gf$ satisfies 
\begin{equation} \label{eq:resolvent_perturbed_Dyson}
-\id = (z\id - \Af + \sopb[\Gf])\Gf + \Df
\end{equation}
for some small $\Df$, then  $\Gf$ is close to $\Mf$.
It turns out that to a large extent this is a question about the invertibility of the stability operator $\lopb \defeq \Id - \Mf \sopb[\genarg] \Mf$ acting on $\C^{n \times n}$.
From \eqref{eq:Dyson_matrix} and \eqref{eq:resolvent_perturbed_Dyson}, we obtain the following equation
\begin{equation} \label{eq:stability_equation}
 \lopb[\Gf- \Mf] = \Mf\Df + \Mf \sopb[\Gf-\Mf](\Gf-\Mf) 
\end{equation}
relating the difference $\Gf-\Mf$ with $\Df$. 
We will call \eqref{eq:stability_equation} the \emph{stability equation}.
Under the assumption that $\Gf$ is not too far from $\Mf$, the question whether $\Gf - \Mf$ is comparable with $\Df$ is determined by the invertibility of $\lopb$ in \eqref{eq:stability_equation} and the boundedness of 
the inverse.

In this subsection, we show that $\norm{\lopb^{-1}}$ is bounded, provided $\dist(z,\supp\bs{V})$ is bounded away from zero.  
In order to prove this bound on $\lopb^{-1}$, we follow the symmetrization procedure for $\lopb$ introduced in \cite{AjankiCorrelated}.
We introduce the operators $\mC_\Rf\colon \C^{n \times n} \to \C^{n \times n}$ and $\mF \colon \C^{n \times n} \to \C^{n \times n}$ through
\[ \mC_\Rf[\Qf] = \Rf\Qf\Rf,\qquad \qquad \mF \defeq \mC_\Wf \mC_{\sqrt{\Im \Mf}} \sopb  \mC_{\sqrt{\Im \Mf}}\mC_\Wf, \]
for $\Qf \in \C^{n\times n}$. 
Furthermore, the matrix $\Tf \in \C^{n\times n}$, the unitary matrix $\Uf \in\C^{n \times n}$ and the positive definite matrix $\Wf\in\C^{n \times n}$ are defined through
\[ \Tf \defeq \mC_{\sqrt{\Im \Mf}}^{-1} [ \Re \Mf] - \ii \id, \qquad \Uf \defeq \frac{\Tf}{\abs{\Tf}},  \qquad \Wf \defeq \abs{\Tf}^{1/2}.  \]
With these notations, a direct calculation yields 
\begin{equation}\label{eq:decomposition_Lf}
 \lopb = \Id - \mC_\Mf \sopb = \mC_{\sqrt{\Im \Mf}} \mC_\Wf \mC_{\Uf^*} \big( \mC_\Uf - \mF\big) \mC_\Wf^{-1} \mC_{\sqrt{\Im \Mf}}^{-1}, 
\end{equation}
as in (4.39) of \cite{AjankiCorrelated}.

We remark that $\mC_\Rf$ for $\Rf\in \C^{n \times n}$ is invertible if and only if $\Rf$ is invertible and $\mC_\Rf^{-1} = \mC_{\Rf^{-1}}$ in this case.  
Similarly, $\mC_\Rf^* = \mC_{\Rf^*}$. 

Our goal is to verify $\normsp{\mF} \leq 1- c$ for some positive constant $c$ which yields $\normsp{(\mC_\Uf - \mF)^{-1}} \leq c^{-1}$ as $\normsp{\mC_\Uf} = 1$. 
Then the boundedness of the other factors in \eqref{eq:decomposition_Lf} implies the bound on the inverse of the stability operator $\lopb$.

\begin{convention}[Comparison relation] \label{conv:matrix_Comparison_Relation}
For nonnegative scalars or vectors $f$ and $g$, we will use the notation $f \lesssim g$ if there is a constant $c>0$, depending only on 
$\norm{\sopb}_{\rm{hs}\to\norm{\genarg}} $ such that $f \leq cg$ and 
$f \sim g$ if $f \lesssim g$ and $f \gtrsim g$ both hold true. 
If the constant $c$ depends on an additional parameter (e.g. $\eps>0$), then we will indicate this dependence by a subscript (e.g. $\lesssim_\eps$). 
\end{convention}

\begin{lem} \label{lem:properties_F_operator}
Let $(\Af, \sopb)$ be a data pair as in Definition \ref{def:Data pair}. 
\begin{enumerate}[(i)]
\item Uniformly for any $z \in \Hb$, we have 
\begin{equation}\label{eq:lower_bound_Wf}
 \disvz[4] \normtwo{\Mf^{-1}}^{-2} \id \lesssim \Wf^4 (\Im z)^2  \lesssim \normtwo{\Mf}^2\normtwo{\Mf^{-1}}^4\id.
\end{equation}
\item 
There is a positive semidefinite $\Ff \in\C^{n \times n}$ such that $\normhs{\Ff} = 1$ and 
 $\mF[\Ff]= \normsp{\mF} \Ff$. Moreover, 
\begin{equation}\label{eq:norm_mF}
 1 -\normsp{\mF} = (\Im z)\frac{\scalar{\Ff}{\mC_{\Wf}[\Im \Mf]}}{\scalar{\Ff}{\Wf^{-2}}} .
\end{equation}
\item Uniformly for $z \in \Hb$, we have
\begin{equation} \label{eq:norm_mF_estimate}
 1 -\normsp{\mF} \gtrsim \disvz[4] \normtwo{\Mf^{-1}}^{-4}.  
\end{equation}
\end{enumerate}
\end{lem}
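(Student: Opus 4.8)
The plan is to prove the three parts of Lemma~\ref{lem:properties_F_operator} in order, following the symmetrization strategy from \cite{AjankiCorrelated} but tracking explicit dependence on $\disvz$ and $\normtwo{\Mf^{-1}}$ rather than invoking flatness.

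\textbf{Part (i).} The key algebraic fact is the identity $\Tf^* \Tf + \id = \mC_{\sqrt{\Im\Mf}}^{-1}[\Mf^* \Mf]\mC_{\sqrt{\Im\Mf}}^{-1}/(\Im z) \cdot (\Im z)$-type relation; more precisely, from $\Mf = \Re\Mf + \ii\Im\Mf$ one gets $\mC_{\sqrt{\Im\Mf}}^{-1}[\Mf] = \Tf + \ii\,\mathrm{something}$, and squaring yields $\abs{\Tf}^2 = \Tf^*\Tf$ comparable to $(\Im z)^{-1}\mC_{\sqrt{\Im\Mf}}^{-1}[\Mf^*\Mf]\mC_{\sqrt{\Im\Mf}}^{-1}$ up to the lower bound in \eqref{eq:Im_Mf_upper_bound_away_support}. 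Then $\Wf^4 = \abs{\Tf}^2$, and I would sandwich $\abs{\Tf}^2$ between multiples of $\id$ using \eqref{eq:Mf_upper_bound_eta}, \eqref{eq:Im_Mf_upper_bound_away_support}: the upper bound $\Im\Mf \le (\Im z)\disvz[-2]\id$ feeds the lower bound on $\Wf^4$, and the lower bound $\Im\Mf \ge (\Im z)\normtwo{\Mf^{-1}}^{-2}\id$ together with $\normtwo{\Mf}$-bounds feeds the upper bound. This is essentially a bookkeeping exercise once the identity relating $\abs{\Tf}^2$ to $\Mf,\Im\Mf$ is written down.

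\textbf{Part (ii).} Since $\mF$ is a self-adjoint (w.r.t.\ $\scalar{\genarg}{\genarg}$) positivity-preserving operator on $\C^{n\times n}$ — it is a conjugation-sandwich of $\sopb$ by positive/unitary $\mC$-operators, and $\mC_{\Uf^*}\sopb\mC_\Uf$-type structure preserves positivity because $\sopb$ does — the matrix Perron--Frobenius theorem (as used in \cite{AjankiCorrelated}) gives a positive semidefinite eigenmatrix $\Ff$ with $\normhs{\Ff}=1$ at the top eigenvalue $\normsp{\mF}$. For \eqref{eq:norm_mF}: starting from $\mF[\Ff] = \normsp{\mF}\Ff$, I would unfold one layer using the explicit form of $\mF$ and the decomposition \eqref{eq:decomposition_Lf} of $\lopb$; taking the scalar product against a suitable test matrix (essentially $\Wf^{-2}$, exploiting that $\mC_\Uf[\id]$-type terms collapse), and using the imaginary-part identity for $\Im\Mf$ from the proof of Lemma~\ref{lem:bounds_Mf}, turns $1-\normsp{\mF}$ into the stated Rayleigh-type quotient $(\Im z)\scalar{\Ff}{\mC_\Wf[\Im\Mf]}/\scalar{\Ff}{\Wf^{-2}}$. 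The main obstacle is getting the algebra of this unfolding exactly right — in particular matching the $\Wf$, $\Uf$, $\sqrt{\Im\Mf}$ conjugations so that the $\mC_\Uf$-piece contributes the ``$-\normsp{\mF}$'' and the $\mF$-piece the rest — but this is exactly (a variant of) the computation in Section~4 of \cite{AjankiCorrelated}.

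\textbf{Part (iii).} Here I combine (i) and (ii). From \eqref{eq:norm_mF}, $1-\normsp{\mF} = (\Im z)\scalar{\Ff}{\mC_\Wf[\Im\Mf]}/\scalar{\Ff}{\Wf^{-2}}$; since $\Ff \ge 0$ and $\mC_\Wf[\Im\Mf] = \Wf(\Im\Mf)\Wf \ge 0$, I lower-bound the numerator using $\Im\Mf \ge (\Im z)\normtwo{\Mf^{-1}}^{-2}\id$ and the lower bound on $\Wf^2$ (square root of the lower bound in \eqref{eq:lower_bound_Wf}), giving $\scalar{\Ff}{\mC_\Wf[\Im\Mf]} \gtrsim (\Im z)\normtwo{\Mf^{-1}}^{-2}\cdot(\text{lower bd on }\Wf^2)\cdot\scalar{\Ff}{\id}$, while the denominator $\scalar{\Ff}{\Wf^{-2}} \lesssim \norm{\Wf^{-2}}\scalar{\Ff}{\id}$ is upper-bounded via the lower bound on $\Wf^4$. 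Dividing and cancelling the $(\Im z)$ factors and $\scalar{\Ff}{\id}$ (which is $>0$ since $\Ff\ge0$, $\normhs{\Ff}=1$ — note $\scalar{\Ff}{\id} = \avg{\Ff} \ge \normhs{\Ff}^2$ for positive semidefinite $\Ff$) leaves a bound of the form $\disvz[4]\normtwo{\Mf^{-1}}^{-4}$, as claimed. The one subtlety to handle carefully is that $\scalar{\Ff}{\id}$ does not degenerate, which follows from Cauchy--Schwarz $\avg{\Ff}\ge\normhs{\Ff}^2=1$ for $\Ff\ge0$.

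The main obstacle overall is Part~(ii): extracting the clean formula \eqref{eq:norm_mF} for the spectral gap $1-\normsp{\mF}$ requires carefully threading the eigenvalue equation through the nested $\mC$-conjugations in \eqref{eq:decomposition_Lf} and invoking the imaginary-part identity for $\Im\Mf$; once that formula is in hand, Parts~(i) and~(iii) are estimates that follow mechanically from Lemma~\ref{lem:bounds_Mf} and the definitions of $\Tf,\Uf,\Wf$.
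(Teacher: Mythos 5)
Your proposal follows essentially the same route as the paper: part (i) is the direct computation of $\Wf^4=\mC_{\sqrt{\Im\Mf}}^{-1}(\mC_{\Im\Mf}+\mC_{\Re\Mf})[(\Im\Mf)^{-1}]$ sandwiched via \eqref{eq:Im_Mf_upper_bound_away_support}, part (ii) is Perron--Frobenius for cone-preserving operators together with the computation behind (4.24) of \cite{AjankiCorrelated}, and part (iii) plugs the bounds from (i) and \eqref{eq:Im_Mf_upper_bound_away_support} into \eqref{eq:norm_mF} and cancels $\avg{\Ff}$. One cosmetic correction: your claimed inequality $\avg{\Ff}\geq\normhs{\Ff}^2$ is false in general (the correct direction of Cauchy--Schwarz gives $\avg{\Ff}\geq n^{-1/2}$), but this is harmless since the argument only needs $\avg{\Ff}>0$, which holds for any nonzero positive semidefinite $\Ff$, and the factor $\avg{\Ff}$ cancels between numerator and denominator.
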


The proof of this lemma is motivated by the proofs of Lemma 4.6 and Lemma 4.7 (i) in \cite{AjankiCorrelated}.

\begin{proof}
We set $\eta \defeq \Im z$.
We rewrite the definition of $\Wf$ and use the upper bound in \eqref{eq:Im_Mf_upper_bound_away_support} to obtain 
\begin{align*}
 \Wf^4 = \mC^{-1}_{\sqrt{\Im \Mf}}(\mC_{\Im \Mf}+\mC_{\Re \Mf}) [(\Im \Mf)^{-1}] \geq &\,\eta^{-1} \disvz[2] \mC^{-1}_{\sqrt{\Im \Mf}}[\Mf\Mf^* + \Mf^*\Mf] \\
\gtrsim & \, \normtwo{\Mf^{-1}}^{-2}  \eta^{-2} \disvz[4]\id. 
\end{align*}
Here, we also applied $\Mf\Mf^* + \Mf^*\Mf \geq 2 \normtwo{\Mf^{-1}}^{-2} \id$ and the upper bound in \eqref{eq:Im_Mf_upper_bound_away_support} again. 
This proves the lower bound in \eqref{eq:lower_bound_Wf}. Similarly, using $\Mf\Mf^* + \Mf^*\Mf \leq 2 \normtwo{\Mf}^2 \id$ and the lower bound in \eqref{eq:Im_Mf_upper_bound_away_support}
we obtain the upper bound in \eqref{eq:lower_bound_Wf}.

For the proof of (ii), we remark that $\mF$ preserves the cone of positive semidefinite matrices. Thus, by a version of the Perron-Frobenius theorem 
of cone preserving operators there is a positive semidefinite $\Ff$ such that $\normhs{\Ff} = 1$ and $\mF\Ff= \normsp{\mF}\Ff$. 
Following the proof of (4.24) in \cite{AjankiCorrelated} and noting that this proof uses neither the uniqueness
of $\Ff$ nor its positive definiteness, we obtain \eqref{eq:norm_mF}.

The bound in \eqref{eq:norm_mF_estimate} is obtained by plugging the lower bound in \eqref{eq:lower_bound_Wf} and the lower bound in \eqref{eq:Im_Mf_upper_bound_away_support} into \eqref{eq:norm_mF}. 
We start by estimating the numerator in \eqref{eq:norm_mF}. Using $\Ff\geq 0$, the cyclicity of the trace, \eqref{eq:Im_Mf_upper_bound_away_support} and the lower bound in \eqref{eq:lower_bound_Wf}, we get
\begin{equation} \label{eq:bound_numerator}
 \scalar{\Ff}{\mC_\Wf[\Im \Mf]} \geq \eta\avgb{\sqrt{\Ff}\Wf^2 \sqrt{\Ff}} \normtwo{\Mf^{-1}}^{-2} \gtrsim \normtwo{\Mf^{-1}}^{-3} \disvz[2] \avg{\Ff}.
\end{equation}
Similarly, we have
\begin{equation} \label{eq:bound_denominator}
 \scalar{\Ff}{\Wf^{-2}} = \avgb{\sqrt{\Ff}\Wf^{-2} \sqrt{\Ff}} \lesssim \frac{\eta}{\disvz[2]} \normtwo{\Mf^{-1}} \avg{\Ff}. 
\end{equation}
Combining \eqref{eq:bound_numerator} and \eqref{eq:bound_denominator} in \eqref{eq:norm_mF} yields \eqref{eq:norm_mF_estimate} and concludes the proof of the lemma.
\end{proof}

\begin{lem}[Bounds on the inverse of the stability operator]\label{lem:Lf_invertible} 
Let $(\Af, \sopb)$ be a data pair as in Definition~\ref{def:Data pair}. 
\begin{enumerate}[(i)]
\item \label{enu_item:Lf_invertible}
The stability operator $\lopb$ is invertible for all $z \in \Hb$. For fixed $E \in\R$ and uniformly for $\eta\geq \max\{1,\abs{E},\normtwo{\Af}\}$, we have
\begin{equation} \label{eq:norm_Lf_large_eta}
\norm{\lopb^{-1}(E+\ii\eta)} \lesssim 1.
\end{equation} 
\item Uniformly for $z\in\Hb$, we have
\begin{equation} \label{eq:lopb_bounded_sp}
\normsp{\lopb^{-1}(z)}\lesssim  \frac{\normtwo{\Mf(z)}\normtwo{\Mf^{-1}(z)}^9}{\disvz[8]}.  
\end{equation}
\item 
Uniformly for $z\in\Hb$, we have 
\begin{equation} \label{eq:lopb_bounded}
 \norm{\lopb^{-1}(z)} + \norm{(\lopb^{-1}(z))^*}
\lesssim 1 + \normtwo{\Mf(z)}^2 + \normtwo{\Mf(z)}^4 \normsp{\lopb^{-1}(z)}. 
\end{equation}
\end{enumerate}
\end{lem}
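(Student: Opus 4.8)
The plan is to extract all three statements from the symmetrized decomposition \eqref{eq:decomposition_Lf} of $\lopb$ together with the spectral gap estimate \eqref{eq:norm_mF_estimate} from Lemma~\ref{lem:properties_F_operator}. First, for part~(i), I would observe that since $\mF$ preserves the positive semidefinite cone and is self-adjoint with respect to $\scalar{\genarg}{\genarg}$, its norm equals its Perron--Frobenius eigenvalue, so $\normsp{\mF} < 1$ is equivalent to invertibility of $\mC_\Uf - \mF$ on $\C^{n\times n}$ equipped with the Hilbert--Schmidt norm; this holds for every $z \in \Hb$ by \eqref{eq:norm_mF_estimate} since $\disvz > 0$ and $\normtwo{\Mf^{-1}} < \infty$. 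Conjugating back through \eqref{eq:decomposition_Lf} (all the conjugation factors $\mC_{\sqrt{\Im\Mf}}$, $\mC_\Wf$, $\mC_{\Uf^*}$ are invertible for $z \in \Hb$) gives invertibility of $\lopb$. For the quantitative bound \eqref{eq:norm_Lf_large_eta} in the regime $\eta \geq \max\{1,\abs{E},\normtwo{\Af}\}$, I would instead use a direct Neumann series argument on $\lopb = \Id - \mC_\Mf\sopb$: from \eqref{eq:Mf_upper_bound_eta} and $\disvz \gtrsim \eta$ in this regime, $\normtwo{\Mf} \lesssim 1/\eta$, hence $\norm{\mC_\Mf\sopb} \lesssim \norm{\sopb}/\eta^2 < 1$ for $\eta$ large, so $\lopb^{-1} = \sum_{k\geq 0}(\mC_\Mf\sopb)^k$ converges with norm $\lesssim 1$.

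For part~(ii), the estimate \eqref{eq:lopb_bounded_sp} in the Hilbert--Schmidt norm comes from reading off \eqref{eq:decomposition_Lf}: we have $\normsp{\lopb^{-1}} \leq \normsp{\mC_{\sqrt{\Im\Mf}}}\,\normsp{\mC_\Wf}\,\normsp{\mC_{\Uf}}\,\normsp{(\mC_\Uf - \mF)^{-1}}\,\normsp{\mC_\Wf^{-1}}\,\normsp{\mC_{\sqrt{\Im\Mf}}^{-1}}$, where $\normsp{\mC_{\Uf^*}} = \normsp{\mC_{\Uf}} = 1$ since $\Uf$ is unitary. The middle factor is bounded by $(1 - \normsp{\mF})^{-1} \lesssim \disvz[-4]\normtwo{\Mf^{-1}}^4$ via \eqref{eq:norm_mF_estimate}. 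The conjugation factors are estimated using \eqref{eq:lower_bound_Wf} for $\Wf^{\pm 2}$ and \eqref{eq:Im_Mf_upper_bound_away_support} for $(\Im\Mf)^{\pm 1}$; multiplying all powers of $\disvz$, $\normtwo{\Mf}$, $\normtwo{\Mf^{-1}}$ and the factors of $\eta$ (which must cancel since $\lopb$ does not depend on how $z$ is written) and bookkeeping carefully yields the stated exponents. This bookkeeping is where I expect the most friction: one has to track that $\Wf^2 \sim \abs{\Tf} = \abs{\mC_{\sqrt{\Im\Mf}}^{-1}[\Re\Mf] - \ii\id}$ and that the $\eta$-dependence in \eqref{eq:lower_bound_Wf} and \eqref{eq:Im_Mf_upper_bound_away_support} conspires to be scale-invariant, so that only powers of the model-relevant quantities survive.

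For part~(iii), I would pass from the Hilbert--Schmidt bound to the $\norm{\genarg}$ (i.e.\ $\norm{\genarg}_{2\to2}$ operator) bound using a bootstrapping identity of the type used in \cite{AjankiCorrelated}: writing the stability equation $\lopb[\Xf] = \Yf$ as $\Xf = \Yf + \mC_\Mf\sopb[\Xf]$ and iterating once, $\Xf = \Yf + \mC_\Mf\sopb[\Yf] + \mC_\Mf\sopb\big[\mC_\Mf\sopb[\Xf]\big]$; the point is that one application of $\sopb$ maps boundedly into itself while the outer structure lets us control the remaining $\mC_\Mf\sopb\mC_\Mf\sopb$-term by $\normtwo{\Mf}^4$ times $\normsp{\sopb\lopb^{-1}\sopb}$, and $\sopb$ is bounded from $\normtwo{\genarg}$ to $\normhs{\genarg}$ (by the comparison-relation convention it is $\norm{\sopb}_{\mathrm{hs}\to\norm{\genarg}}$-bounded) and vice versa. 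Collecting the terms gives $\norm{\lopb^{-1}} \lesssim 1 + \normtwo{\Mf}^2 + \normtwo{\Mf}^4\normsp{\lopb^{-1}}$, and the same argument applied to $\lopb^*$ (whose symmetrized form is the adjoint of \eqref{eq:decomposition_Lf}) handles the $(\lopb^{-1})^*$ term. The main obstacle overall is the exponent bookkeeping in part~(ii); parts~(i) and~(iii) are structurally routine given the decomposition \eqref{eq:decomposition_Lf} and the boundedness properties of $\sopb$ already available.
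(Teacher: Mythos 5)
Your treatment of parts (ii) and (iii) and of the invertibility claim in (i) is essentially the paper's own proof: the $\normhs{\genarg}$-bound comes from reading off the factorization \eqref{eq:decomposition_Lf}, bounding the conjugation operators via \eqref{eq:lower_bound_Wf} and \eqref{eq:Im_Mf_upper_bound_away_support}, using $\normsp{\mC_\Uf}=1$ and $\normsp{(\mC_\Uf-\mF)^{-1}}\le(1-\normsp{\mF})^{-1}$ with \eqref{eq:norm_mF_estimate} (the paper records the four conjugation factors as contributing $\normtwo{\Mf}\normtwo{\Mf^{-1}}^5\disvz[-4]$, which combined with $\disvz[-4]\normtwo{\Mf^{-1}}^4$ from the spectral gap gives exactly the stated exponents); and \eqref{eq:lopb_bounded} is obtained by the same one-step bootstrap $\lopb^{-1}=\Id+\mC_\Mf\sopb+\mC_\Mf\sopb\,\lopb^{-1}\mC_\Mf\sopb$ using $\norm{\sopb}_{\rm{hs}\to\norm{\genarg}}\lesssim1$ and its dual bound, with $\normsp{(\lopb^{-1})^*}=\normsp{\lopb^{-1}}$ handling the adjoint.

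The one place where you deviate is the quantitative bound \eqref{eq:norm_Lf_large_eta}, and there your argument has a small gap. The Neumann series for $\lopb=\Id-\mC_\Mf\sopb$ requires $\normtwo{\Mf}^2\norm{\sopb}<1$, i.e.\ roughly $\eta^2>\norm{\sopb}$; but the implicit constants in this lemma are allowed to depend on $\norm{\sopb}_{\rm{hs}\to\norm{\genarg}}$, which need not be below $1$, so the regime $\eta\geq\max\{1,\abs{E},\normtwo{\Af}\}$ includes values $1\le\eta\lesssim\norm{\sopb}^{1/2}$ where the series need not converge (note $\disvz$ can equal $\eta$ exactly, so \eqref{eq:Mf_upper_bound_eta} gives no extra room). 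The paper avoids this by deriving \eqref{eq:norm_Lf_large_eta} as a corollary of \eqref{eq:lopb_bounded_sp} and \eqref{eq:lopb_bounded}: for $\eta\geq\max\{1,\abs{E},\normtwo{\Af}\}$ one has $\disvz\geq\eta$, hence $\normtwo{\Mf}\le\eta^{-1}$ by \eqref{eq:Mf_upper_bound_eta} and $\normtwo{\Mf^{-1}}\lesssim\eta$ by \eqref{eq:bound_Mf_inverse}, so \eqref{eq:lopb_bounded_sp} gives $\normsp{\lopb^{-1}}\lesssim\eta^{-1}\cdot\eta^{9}\cdot\eta^{-8}=1$ and then \eqref{eq:lopb_bounded} gives $\norm{\lopb^{-1}}\lesssim1$ on the whole range. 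You should either adopt that route or supplement your Neumann argument with (ii)+(iii) on the intermediate range $1\le\eta\lesssim\norm{\sopb}^{1/2}$; with that patch the proposal is complete.
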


\begin{proof}
We start with the proof of \eqref{eq:lopb_bounded_sp}.
From the upper and lower bounds in \eqref{eq:lower_bound_Wf} and \eqref{eq:Im_Mf_upper_bound_away_support}, respectively, we obtain 
\begin{subequations} \label{eq:estimates_aux_operators_stab_operator}
\begin{align}
\norm{\mC_\Wf} \lesssim & \, \frac{1}{\eta} \normtwo{\Mf}\normtwo{\Mf^{-1}}^2, & \qquad \norm{\mC_\Wf^{-1}} \lesssim & \, \frac{\eta}{\disvz[2]} \normtwo{\Mf^{-1}},  \\
 \norm{\mC_{\sqrt{\Im \Mf}}}\lesssim & \, \frac{\eta}{\disvz[2]},&  \qquad \norm{\mC_{\sqrt{\Im \Mf}}^{-1}} \lesssim  & \,\frac{1}{\eta}\normtwo{\Mf^{-1}}^2.
\end{align}
\end{subequations}
Since $\normsp{\mC_\Tf} \leq \norm{\mC_\Tf}$ for Hermitian $\Tf \in\C^{n \times n}$ we conclude from \eqref{eq:estimates_aux_operators_stab_operator}, \eqref{eq:norm_mF_estimate} and \eqref{eq:Mf_upper_bound_eta}
\[ \normsp{\lopb^{-1}} \lesssim \frac{\normtwo{\Mf}\normtwo{\Mf^{-1}}^5}{\disvz[4]} \normsp{(\mC_\Uf - \mF)^{-1}} \lesssim \frac{\normtwo{\Mf}\normtwo{\Mf^{-1}}^9}{\disvz[8]}. \]

For the proof of \eqref{eq:lopb_bounded}, we remark that $\norm{\sopb}_{\rm{hs}\to\norm{\genarg}} \lesssim 1$ implies
$\norm{\sopb}_{\norm{\genarg}\to\rm{hs}}\lesssim 1$. 
Therefore, exactly as in the proof of (4.53) in \cite{AjankiCorrelated}, we obtain
the first bound in~\eqref{eq:lopb_bounded}. We similarly conclude the second bound from $\normsp{(\lopb^{-1})^*} = \normsp{\lopb^{-1}}$. 

We conclude the proof of Lemma \ref{lem:Lf_invertible} by remarking that \eqref{eq:norm_Lf_large_eta} is a consequence of \eqref{eq:lopb_bounded_sp}, \eqref{eq:Mf_upper_bound_eta}, \eqref{eq:lopb_bounded}
and~\eqref{eq:bound_Mf_inverse}.
\end{proof}

\begin{coro}[Lipschitz-continuity of $\Mf$] \label{coro:Mf_lipschitz}
If $(\Af, \sopb)$ is a data pair as in Definition \ref{def:Data pair} then  
there exists $c>0$ such that for each (possibly $N$-dependent) $\eps\in(0,1]$ we have 
\begin{equation} \label{eq:Mf_Lipschitz_continuous}
 \normtwo{\Mf(z_1)-\Mf(z_2)} \lesssim (\eps^{-c} + \normtwo{\Af}^c) \abs{z_1-z_2} 
\end{equation}
for all $z_1, z_2 \in \Hb$ such that $\Im z_1, \Im z_2 \geq \eps$.
\end{coro}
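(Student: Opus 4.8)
The plan is to reduce the claim to a uniform bound on $\partz\Mf$ on the region $\{z\in\Hb\colon\Im z\geq\eps\}$: since this set is convex and $\Mf$ is holomorphic on $\Hb$ (it is the Stieltjes transform of $\Vf$ from Lemma~\ref{lmm:matrix support of V}), integrating along the straight segment from $z_2$ to $z_1$ gives $\normtwo{\Mf(z_1)-\Mf(z_2)}\leq\abs{z_1-z_2}\,\sup_{\Im z\geq\eps}\normtwo{\partz\Mf(z)}$. To estimate $\partz\Mf$ I differentiate the MDE~\eqref{eq:Dyson_matrix} in $z$; using $\partz(\Mf^{-1})=-\Mf^{-1}(\partz\Mf)\Mf^{-1}$ this yields $\Mf^{-1}(\partz\Mf)\Mf^{-1}=\id+\sopb[\partz\Mf]$, and multiplying by $\Mf$ on both sides gives the identity $\lopb[\partz\Mf]=\Mf^2$, i.e. $\partz\Mf=\lopb^{-1}[\Mf^2]$ (the operator $\lopb$ is invertible on $\Hb$ by Lemma~\ref{lem:Lf_invertible}~(i)). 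This gives two ways to proceed: either $\normtwo{\partz\Mf}\leq\norm{\lopb^{-1}}\,\normtwo{\Mf}^2$, or, rewriting the identity as $\partz\Mf-\Mf\sopb[\partz\Mf]\Mf=\Mf^2$, $\normtwo{\partz\Mf}(1-\norm{\sopb}\normtwo{\Mf}^2)\leq\normtwo{\Mf}^2$ whenever $\norm{\sopb}\normtwo{\Mf}^2<1$.

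Next I collect a priori facts valid for $\Im z\geq\eps$. By Lemma~\ref{lmm:matrix support of V}~(i) the measure $\Vf$ lives on $\R$, so $\disvz=\dist(z,\supmeas)\geq\Im z\geq\eps$, hence $\normtwo{\Mf}\leq1/\disvz\leq1/\eps$ by~\eqref{eq:Mf_upper_bound_eta}; moreover $\norm{\sopb}\lesssim1$ because $\normtwo{\genarg}\geq\normhs{\genarg}$ on $\C^{n\times n}$ and the comparison relation allows constants depending on $\norm{\sopb}_{\mathrm{hs}\to\norm{\genarg}}$. A short computation from $\Mf^{-1}=-(z\id-\Af+\sopb[\Mf])$ and $\normtwo{\Mf}\leq\eps^{-1}$ shows there is a constant $C_\ast$, depending only on $\norm{\sopb}$, such that $\abs z\geq C_\ast(1+\normtwo{\Af}+\eps^{-1})$ forces $\smin(\Mf^{-1})\geq\tfrac12\abs z$ and hence $\norm{\sopb}\normtwo{\Mf}^2\leq\tfrac12$. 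I then split into two cases. If $\norm{\sopb}\normtwo{\Mf(z)}^2\leq\tfrac12$, the second estimate of the first paragraph gives at once $\normtwo{\partz\Mf}\leq2\normtwo{\Mf}^2\leq2/\disvz[2]\leq2\eps^{-2}$. If $\norm{\sopb}\normtwo{\Mf(z)}^2>\tfrac12$, then $\abs z<C_\ast(1+\normtwo{\Af}+\eps^{-1})$, so~\eqref{eq:bound_Mf_inverse} gives $\normtwo{\Mf^{-1}}\leq\abs z+\normtwo{\Af}+\norm{\sopb}\normtwo{\Mf}\lesssim1+\normtwo{\Af}+\eps^{-1}$; feeding this together with $\normtwo{\Mf}\leq\eps^{-1}$ and $\disvz\geq\eps$ into~\eqref{eq:lopb_bounded_sp} and then into~\eqref{eq:lopb_bounded} yields $\norm{\lopb^{-1}}\lesssim\eps^{-13}(1+\normtwo{\Af}+\eps^{-1})^9$, so $\normtwo{\partz\Mf}\leq\norm{\lopb^{-1}}\normtwo{\Mf}^2\lesssim\eps^{-15}(1+\normtwo{\Af}+\eps^{-1})^9\lesssim\eps^{-24}+\normtwo{\Af}^{24}$ after expanding and using $\eps^{-15}\normtwo{\Af}^9\leq\eps^{-24}+\normtwo{\Af}^{24}$. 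In either case $\normtwo{\partz\Mf(z)}\lesssim\eps^{-c}+\normtwo{\Af}^c$ with $c=24$ (recall $\eps\leq1$), and~\eqref{eq:Mf_Lipschitz_continuous} follows from the segment integration.

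The main obstacle is precisely the point handled by the case split: the half-plane $\{\Im z\geq\eps\}$ is \emph{unbounded} in $\abs z$, while~\eqref{eq:bound_Mf_inverse} only controls $\normtwo{\Mf^{-1}}$ by $\abs z+\normtwo{\Af}+\dots$, so the bounds of Lemma~\ref{lem:Lf_invertible} on $\lopb^{-1}$, being polynomial in $\normtwo{\Mf^{-1}}$, cannot be applied uniformly over the whole region. The observation that resolves this is that large $\abs z$ automatically makes $\Mf$ small, $\normtwo{\Mf}\lesssim\abs z^{-1}$, hence $\norm{\sopb}\normtwo{\Mf}^2\leq\tfrac12$, so that there $\partz\Mf$ can be estimated directly from the stability equation without invoking $\lopb^{-1}$; the quantitative control of $\lopb^{-1}$ is needed only on the complementary set $\abs z\lesssim 1+\normtwo{\Af}+\eps^{-1}$, where everything is bounded in terms of the model parameter and $\eps$. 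The remaining steps — holomorphicity and the formula $\partz(\Mf^{-1})=-\Mf^{-1}(\partz\Mf)\Mf^{-1}$ (valid since $\Mf(z)$ is invertible on $\Hb$), and the arithmetic of combining~\eqref{eq:lopb_bounded_sp}, \eqref{eq:lopb_bounded}, \eqref{eq:Mf_upper_bound_eta} and~\eqref{eq:bound_Mf_inverse} — are routine.
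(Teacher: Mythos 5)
Your proof is correct and follows essentially the same route as the paper: differentiate the MDE to obtain $\lopb[\pt_z\Mf]=\Mf^2$, invert $\lopb$, and control $\norm{\lopb^{-1}}$ via \eqref{eq:lopb_bounded_sp}, \eqref{eq:lopb_bounded}, \eqref{eq:Mf_upper_bound_eta} and \eqref{eq:bound_Mf_inverse}. Your case split on whether $\norm{\sopb}\normtwo{\Mf}^2\le\tfrac12$, which handles the large-$\abs{z}$ regime where \eqref{eq:bound_Mf_inverse} alone is not uniform, is a detail the paper's one-line proof leaves implicit, but it is the same device the paper employs elsewhere (in the derivation of \eqref{eq:Lf_bounded2}), so it is a welcome refinement rather than a different approach.
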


\begin{proof}
We differentiate \eqref{eq:Dyson_matrix} with respect to $z$ and obtain $\lopb[\pt_z \Mf] = \Mf^2$.
We invert $\lopb$, use \eqref{eq:lopb_bounded_sp}, \eqref{eq:Mf_upper_bound_eta} and \eqref{eq:bound_Mf_inverse} and follow the proof of \eqref{eq:lopb_bounded}. 
This yields \eqref{eq:Mf_Lipschitz_continuous} and hence concludes the proof of Corollary \ref{coro:Mf_lipschitz}.  
\end{proof}

\subsection{Translation to results for Kronecker matrices}
Here we translate the results of Subsections~\ref{subsec:Solution of the Dyson equation} and \ref{subsec:stability_mde} into results about \eqref{eq:Dyson_non_Hermitian}.
In fact, we study \eqref{eq:Dyson_non_Hermitian} in a slightly more general setup.
Motivated by the identification $\C^{2\times 2} \otimes \C^{L\times L} \cong \C^{2L \times 2L}$, 
we consider \eqref{eq:Dyson_non_Hermitian} on $\C^{K\times K}$ for some $K \in \N$ instead.
The results of Subsections~\ref{subsec:Solution of the Dyson equation} and \ref{subsec:stability_mde} are 
applied with $n=KN$.
Moreover, the special $a_j^\zeta$ defined in \eqref{eq:mapping_non_hermitian_to_hermitian2} are replaced by general $a_j\in \C^{K\times K}$. 
Therefore, the parameter $\zeta$ will not be present throughout this subsection.
We thus look at the \emph{Dyson equation in vector form} 
\begin{equation} \label{eq:Dyson}
- \frac{1}{m_j(z)} = z\id  - a_j + \Sf_j[\mf(z)],
\end{equation}
where $z\in\Hb$, $m_j(z)\in\C^{K\times K}$ for $j \in [N]$, $\mf(z)\defeq (m_1(z), \ldots m_N(z))$ and $\Sf_j$ is defined as in \eqref{eq:sopb_l}. 

Recall that the definition of $\Sf_j$ involves coefficients $s_{ij}^\mu$ and $t_{ij}^\nu$ as well as matrices $\alpha_\mu$ and $\beta_\nu$. Next, we formulate 
assumptions on $\Sf$ in terms of these data as well as assumptions on $a_1, \ldots, a_N$. 

\begin{assums} \label{assums:qualitative_dyson}
\begin{enumerate}[(i)]
\item 
For all $\mu, \nu \in [\ell]$ and $i, j \in [N]$, we have nonnegative scalars $s_{ij}^\mu \in \R$ and $t_{ij}^\nu\in\R$ satisfying \eqref{eq:upper_bound_variances}. 
Furthermore, $s_{ij}^\mu = s_{ji}^\mu$ for all $i,j \in [N]$ and $\mu \in [\ell]$. 
\item 
For $\mu,\nu \in [\ell]$, we have $\alpha_\mu, \beta_\nu \in \C^{K\times K}$ and $\alpha_\mu$ is Hermitian.
There is $\alpha^* >0$ such that
\begin{equation}
\label{eq:bound_coefficients} 
\max_{\mu\in [\ell]} \abs{\alpha_\mu}\leq \alpha^*, \qquad 
\max_{\nu \in [\ell]} \abs{\beta_\nu} \leq \alpha^*.
\end{equation}
\item The matrices $a_1, \ldots, a_N \in\C^{K\times K}$ have a negative semidefinite imaginary part, $\Im a_j \leq 0$.
\end{enumerate}
\vspace{0.1cm}
\end{assums}

The conditions in (i) of Assumptions \ref{assums:qualitative_dyson} are motivated by the definition of the variances in \eqref{eq:def_variances}. 
In particular, since $X_\mu$ is Hermitian the variances from \eqref{eq:def_variances} satisfy $s_{ij}^\mu = s_{ji}^\mu$. 

In order to apply the results of Subsections \ref{subsec:Solution of the Dyson equation} and 
\ref{subsec:stability_mde} to \eqref{eq:Dyson}, we 
now relate it to the matrix Dyson equation (MDE) \eqref{eq:Dyson_matrix}.
It turns out that \eqref{eq:Dyson} is a special case when the MDE on $\tenspace = \C^{K\times K} \otimes \C^{N\times N}$ 
is restricted to the \emph{block diagonal matrices} 
\begin{equation} \label{eq:def_blockdiag}
\blockdiag \defeq \linspan\{ a\otimes D\colon a \in\C^{K\times K}, D \in \C^{N\times N} \text{ diagonal}\}\subset \tenspace. 
\end{equation}
We recall $E_{ll}$, $\Sf_l$ and $P_{ll}$ from \eqref{eq:def_E_ij}, \eqref{eq:sopb_l} and \eqref{eq:def_P_ij}, 
respectively, and define $\Af\in\tenspace$ and $\sopb\colon \tenspace \to \tenspace$ through
\begin{equation} \label{eq:def_sopb}
\Af \defeq \sum_{l=1}^N a_l \otimes E_{ll}, \qquad  
\sopb[\Rf] \defeq \sum_{l=1}^N \Sf_l[(P_{11} \Rf, \ldots, P_{NN} \Rf)] \otimes E_{ll}.
\end{equation}
With these definitions, the Dyson equation in vector form, \eqref{eq:Dyson}, can be rewritten in the matrix form 
\eqref{eq:Dyson_matrix} for a solution matrix $\Mf \in \tenspace$. 
In the following, we will refer to \eqref{eq:Dyson_matrix} with these choices of $\tenspace$, $\Af$ and $\sopb$ as the \emph{Dyson equation in matrix form}.

In the remainder of the paper, we will consider the Dyson equation in matrix form, \eqref{eq:Dyson_matrix}, exclusively with the choices of $\Af$ and $\sopb$ from \eqref{eq:def_sopb}.
We have the following connection between \eqref{eq:Dyson} and \eqref{eq:Dyson_matrix}. 
If $\Mf$ is a solution of \eqref{eq:Dyson_matrix} then, 
since the range of $\sopb$ is contained in $\blockdiag$
and $\Af \in \blockdiag$, we have $\Mf \in \blockdiag$, i.e, it can be written as
\begin{equation} \label{eq:structure_M}
 \Mf(z) = \sum_{j=1}^N m_j(z) \otimes E_{jj}
\end{equation}
for some unique $m_1(z), \ldots, m_N(z) \in \C^{K \times K}$. Moreover, these $m_i$ solve \eqref{eq:Dyson}. Conversely, if $\mf=(m_1, \ldots, m_N) \in(\C^{K\times K})^N$ 
solves \eqref{eq:Dyson} then $\Mf$ defined via \eqref{eq:structure_M} is a solution of \eqref{eq:Dyson_matrix}. Furthermore, if $\Mf$ satisfies \eqref{eq:structure_M} then 
$\Im \Mf$ is positive definite if and only if $\Im m_j$ is positive definite for all $j \in[N]$. 
This correspondence yields the following translation of Lemma~\ref{lmm:matrix support of V} to the setting for Kronecker random matrices, Proposition \ref{pro:exist_unique_M} below. 

For part (ii), we recall $\norm{\rf}=\max_{i=1}^N\abs{r_i}$ for $\rf=(r_1, \ldots, r_N) \in (\C^{K\times K})^N$ and that $\norm{\Sf}$ denotes the 
operator norm of $\Sf \colon (\C^{K\times K})^N\to (\C^{K\times K})^N$ induced by $\norm{\genarg}$.
We also used that $\norm{\Sf} = \norm{\sopb}$, which is easy to see  since $\Sf=\sopb$ on the block diagonal matrices $(\C^{K\times K})^N \cong \blockdiag$ and $\sopb=0$ on the orthogonal complement $\blockdiag^\perp$.
The orthogonal complement is defined with respect to the scalar product on $\tenspace$ introduced in \eqref{eq:def_normHS}.
Furthermore, we remark that the identity \eqref{eq:structure_M} implies 
\[ \normtwo{\Mf} = \norm{\mf}. \]

\begin{pro}[Existence, uniqueness of $\mf$] \label{pro:exist_unique_M}
Under Assumptions \ref{assums:qualitative_dyson} we have 
\begin{enumerate}[(i)]
\item 
There is a unique function $\mf \colon \Hb \to (\C^{K\times K})^N$ such that the components $\mf(z)=(m_1(z), \ldots, m_N(z))$ satisfy \eqref{eq:Dyson} for $z\in\Hb$ and all $j\in[N]$ 
and $\Im m_j(z)$ is positive definite for all $z\in\Hb$ and all $j \in [N]$.
Furthermore,
for each $j \in [N]$, there is a measure $v_j$ on $\R$ with values in the positive semidefinite matrices of $\C^{K\times K}$ such that $v_j(\R) = \id$ and for all $z \in \Hb$, we have 
\begin{equation} \label{eq:Mf_Stieltjes_integral}
 m_j(z) = \int_\R \frac{v_j(\di \tau)}{\tau -z }. 
\end{equation}
\item If $a_j$ is Hermitian, i.e., $a_j=a_j^*$ for all $j \in [N]$ then the union of the supports of $v_j$ is 
comparable with the union of the spectra of the $a_j$ in the following sense   
\begin{subequations} 
\begin{align}
\label{eq:support_V}
\bigcup_{j=1}^N\supp v_j & \, \subset \, \bigcup_{j=1}^N \spec a_j + [-2 \norm{\Sf}^{1/2}, 2 \norm{\Sf}^{1/2}], \\
\bigcup_{j=1}^N \spec a_j & \, \subset \, \bigcup_{j=1}^N \supp v_j + [-\norm{\Sf}^{1/2}, \norm{\Sf}^{1/2}]. \label{eq:spec_A_in_supp_rho}
\end{align}
\end{subequations}
\end{enumerate}
\end{pro}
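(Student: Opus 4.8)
The plan is to obtain Proposition~\ref{pro:exist_unique_M} as a direct consequence of Lemma~\ref{lmm:matrix support of V}, applied to the matrix Dyson equation~\eqref{eq:Dyson_matrix} on $\tenspace=\C^{K\times K}\otimes\C^{N\times N}$ (so that $n=KN$) with the pair $(\Af,\sopb)$ from~\eqref{eq:def_sopb}, combined with the block-diagonal correspondence between~\eqref{eq:Dyson} and~\eqref{eq:Dyson_matrix} recorded above. The first step is to check that $(\Af,\sopb)$ is a data pair in the sense of Definition~\ref{def:Data pair}. The imaginary part $\im\Af=\sum_l(\im a_l)\otimes E_{ll}$ is negative semidefinite by Assumption~\ref{assums:qualitative_dyson}~(iii); the operator $\sopb$ preserves positive semidefinite matrices since for $r_k\geq0$ each summand $\alpha_\mu r_k\alpha_\mu$, $\beta_\nu r_k\beta_\nu^*$, $\beta_\nu^* r_k\beta_\nu$ in~\eqref{eq:sopb_l} is positive semidefinite (using $\alpha_\mu=\alpha_\mu^*$ from Assumption~\ref{assums:qualitative_dyson}~(ii)) and the coefficients $s_{ik}^\mu,t_{ik}^\nu$ are nonnegative; and $\sopb$ is self-adjoint for the scalar product~\eqref{eq:def_normHS}. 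The last point is a short trace computation on $\blockdiag$ using cyclicity of the trace, the Hermiticity $\alpha_\mu=\alpha_\mu^*$, the symmetry $s_{ik}^\mu=s_{ki}^\mu$ from Assumption~\ref{assums:qualitative_dyson}~(i), and the fact that $t_{ik}^\nu\beta_\nu(\genarg)\beta_\nu^*+t_{ki}^\nu\beta_\nu^*(\genarg)\beta_\nu$ is exactly the self-adjoint part of the $Y_\nu$-contribution; on $\blockdiag^\perp$ the operator $\sopb$ vanishes, so self-adjointness holds on all of $\tenspace$.

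Part~(i) then follows by unwinding the correspondence. By Lemma~\ref{lmm:matrix support of V}~(i) together with the existence and uniqueness statement recalled after~\eqref{eq:Dyson_matrix} (from~\cite{Helton01012007}), there is a unique $\Mf\colon\Hb\to\tenspace$ with positive definite imaginary part solving~\eqref{eq:Dyson_matrix}, and it admits the Stieltjes representation~\eqref{matrix Stieltjes transform rep} with $\Vf$ valued in positive semidefinite matrices and $\Vf(\R)=\id$. Since $\Af\in\blockdiag$, $z\id\in\blockdiag$ and the range of $\sopb$ lies in $\blockdiag$, equation~\eqref{eq:Dyson_matrix} forces $\Mf(z)\in\blockdiag$, hence $\Mf=\sum_jm_j\otimes E_{jj}$ as in~\eqref{eq:structure_M} with the $m_j$ solving~\eqref{eq:Dyson}, and $\Im\Mf>0$ is equivalent to $\Im m_j>0$ for all $j$; conversely any solution of~\eqref{eq:Dyson} yields a solution of~\eqref{eq:Dyson_matrix}, so existence and uniqueness transfer to $\mf$. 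Finally, applying the maps $P_{jj}$ from~\eqref{eq:def_P_ij} to~\eqref{matrix Stieltjes transform rep} and setting $v_j\defeq P_{jj}\Vf$ --- a measure valued in the $j$-th diagonal $K\times K$ block of $\Vf$, hence positive semidefinite, with $v_j(\R)=\id$ --- gives~\eqref{eq:Mf_Stieltjes_integral}; the off-diagonal blocks of $\Vf$ vanish because those of $\Mf(z)$ do, so in fact $\Vf=\sum_jv_j\otimes E_{jj}$.

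For part~(ii), if every $a_j$ is Hermitian then $\Af=\sum_la_l\otimes E_{ll}$ is Hermitian, so Lemma~\ref{lmm:matrix support of V}~(ii) gives~\eqref{eq:matrix support_V} and~\eqref{eq:matrix support_A}; it only remains to translate the objects appearing there. From the block-diagonal forms, $\spec\Af=\bigcup_{j=1}^N\spec a_j$ and $\supp\Vf=\bigcup_{j=1}^N\supp v_j$, and $\norm{\sopb}=\norm{\Sf}$ as noted before the statement, because $\sopb$ restricts to $\Sf$ on $\blockdiag$ and vanishes on $\blockdiag^\perp$. Substituting these into~\eqref{eq:matrix support_V}--\eqref{eq:matrix support_A} produces~\eqref{eq:support_V} and~\eqref{eq:spec_A_in_supp_rho}. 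I do not expect a real obstacle: the substance is entirely in Lemma~\ref{lmm:matrix support of V}, and the only points requiring care are the self-adjointness of $\sopb$ --- where the asymmetric-looking placement of $t_{ik}^\nu$ versus $t_{ki}^\nu$ in~\eqref{eq:sopb_l} is precisely what is needed --- and the bookkeeping that block-diagonality survives both matrix inversion and Stieltjes inversion, which is what allows the scalar-level data $m_j$, $v_j$ and $\spec a_j$ to be read off blockwise.
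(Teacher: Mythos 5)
Your proposal is correct and follows essentially the same route as the paper, which obtains Proposition~\ref{pro:exist_unique_M} precisely as the block-diagonal translation of Lemma~\ref{lmm:matrix support of V} applied to the matrix Dyson equation with $(\Af,\sopb)$ from \eqref{eq:def_sopb}, using $\spec\Af=\bigcup_j\spec a_j$, $\supp\Vf=\bigcup_j\supp v_j$ and $\norm{\Sf}=\norm{\sopb}$. The only difference is that you spell out the verification that $(\Af,\sopb)$ is a data pair (positivity preservation and self-adjointness of $\sopb$, including the role of the $t_{ik}^\nu$ versus $t_{ki}^\nu$ placement), which the paper leaves implicit; this check is accurate.
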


\begin{proof}[Proof of Lemma \ref{lem:Dyson_non_hermitian}]
Using the identification $\C^{2\times 2} \otimes \C^{ L\times  L} \cong \C^{K\times K}$ for $K = 2 L$ and the definitions in \eqref{eq:mapping_non_hermitian_to_hermitian1} and 
\eqref{eq:mapping_non_hermitian_to_hermitian2}, the lemma is an immediate consequence of Proposition~\ref{pro:exist_unique_M} with $a_j = a_j^\zeta$ for $j \in [N]$
since the proof of the proposition only uses the qualitative conditions in Assumptions~\ref{assums:qualitative_dyson}.
\end{proof}

Proposition \ref{pro:exist_unique_M} asserts that there is a measure $V_\Mf$ on $\R$ with values in the positive semidefinite elements of $\blockdiag \subset \tenspace$ 
such that for $z\in\Hb$, we have
\begin{equation} \label{eq:def_V_Mf}
V_\Mf(\di \tau) \defeq \sum_{j=1}^N v_j(\di \tau)\otimes E_{jj}, \qquad \Mf(z) = \int_\R \frac{1}{\tau - z } V_\Mf(\di \tau). 
\end{equation}
Clearly, we have $V_\Mf=\Vf$ for the unique measure $\Vf$ with values in positive semidefinite matrices that satisfies \eqref{matrix Stieltjes transform rep}. And we have $\supp V_\Mf=\supmeas$ with the self-consistent density of states defined in \eqref{eq:matrix def_self_cons_dens_states}. Note that in this setup
\begin{equation} \label{eq:def_self_cons_dens_states}
 \rho(\di \tau) = \frac{1}{NK} \sum_{j=1}^N \Tr v_j(\di\tau)\,,
\end{equation}
with the $\C^{K \times K}$-matrix valued measures $v_j$ defined through \eqref{eq:Mf_Stieltjes_integral}.

In the remainder of the paper, $\mf=(m_1, \ldots, m_N)$ and $\Mf$ always denote the unique solutions of \eqref{eq:Dyson} and~\eqref{eq:Dyson_matrix}, respectively, 
connected via \eqref{eq:structure_M}. 
We now modify the concept of {comparison relation} introduced in Convection~\ref{conv:matrix_Comparison_Relation} so that inequalities are understood up to constants depending only on the model parameters from Assumption~\ref{assums:qualitative_dyson}.
\begin{convention}[Comparison relation] \label{conv:Comparison_Relation}
From here on we use the comparison relation introduced in Convection~\ref{conv:matrix_Comparison_Relation} so that the constants implicitly hidden in this notation may  depend only on 
$K$, $\ell$, $\kappa_1$ from \eqref{eq:upper_bound_variances} and $\alpha^*$ from \eqref{eq:bound_coefficients}. 
\end{convention}

\begin{lem}[Bounds on $\Sf$] \label{lem:norm_Sf}
Assumptions \ref{assums:qualitative_dyson} imply
\begin{equation}
\normsp{\Sf} \, \lesssim \, 1, \qquad \norm{\Sf} \, \lesssim \, 1. \label{eq:norm_Sf}
\end{equation}
\end{lem}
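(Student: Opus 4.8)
The plan is to obtain both estimates directly from the explicit formula~\eqref{eq:sopb_l} for $\Sf$, using only the quantitative hypotheses~\eqref{eq:upper_bound_variances} and~\eqref{eq:bound_coefficients}; no information about the Dyson equation is needed. The key elementary fact is that conjugation by one of the coefficient matrices is a bounded operation: for any $\mu,\nu\in[\ell]$ and $r\in\C^{K\times K}$, submultiplicativity of the operator norm gives $\abs{\alpha_\mu r\alpha_\mu}\le(\alpha^*)^2\abs r$ and likewise $\abs{\beta_\nu r\beta_\nu^*},\abs{\beta_\nu^* r\beta_\nu}\le(\alpha^*)^2\abs r$, and exactly the same three inequalities hold with $\abs{\genarg}$ replaced by the Hilbert--Schmidt norm $r\mapsto\Tr(r^*r)^{1/2}$, since $\Tr((arb)^*arb)^{1/2}\le\abs a\,\abs b\,\Tr(r^*r)^{1/2}$.

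First I would treat $\norm{\Sf}$. Writing $w_{ik}\defeq\sum_{\mu=1}^\ell s_{ik}^\mu+\sum_{\nu=1}^\ell(t_{ik}^\nu+t_{ki}^\nu)\ge0$, the triangle inequality in~\eqref{eq:sopb_l} together with the bounds above yields, for any $\rf=(r_1,\dots,r_N)$,
\[
 \abs{\Sf_i[\rf]}\le(\alpha^*)^2\sum_{k=1}^N w_{ik}\abs{r_k}\le(\alpha^*)^2\Big(\max_k\abs{r_k}\Big)\sum_{k=1}^N w_{ik}.
\]
By~\eqref{eq:upper_bound_variances}, $w_{ik}\le3\ell\kappa_1/N$, hence $\sum_k w_{ik}\le3\ell\kappa_1$; taking the maximum over $i$ gives $\norm{\Sf}\le3\ell\kappa_1(\alpha^*)^2$, which is $\lesssim1$ by Convention~\ref{conv:Comparison_Relation}.

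Next I would treat $\normsp{\Sf}$ by a Schur-test argument. The same triangle-inequality step gives $\Tr(\Sf_i[\rf]^*\Sf_i[\rf])^{1/2}\le(\alpha^*)^2\sum_k w_{ik}\Tr(r_k^*r_k)^{1/2}$. Squaring, applying Cauchy--Schwarz in the form $\big(\sum_k w_{ik}x_k\big)^2\le\big(\sum_k w_{ik}\big)\big(\sum_k w_{ik}x_k^2\big)$ and using $\sum_k w_{ik}\le3\ell\kappa_1$, then summing over $i$ and using $\sum_i w_{ik}\le3\ell\kappa_1$ (again immediate from~\eqref{eq:upper_bound_variances}), gives
\[
 \sum_{i=1}^N\Tr\big(\Sf_i[\rf]^*\Sf_i[\rf]\big)\le(\alpha^*)^4(3\ell\kappa_1)^2\sum_{k=1}^N\Tr(r_k^*r_k).
\]
Dividing both sides by $NK$ and recalling the definition~\eqref{eq:def_normhs_matrix_vector} of $\normhs{\genarg}$ gives $\normhs{\Sf[\rf]}\le3\ell\kappa_1(\alpha^*)^2\normhs{\rf}$, i.e.\ $\normsp{\Sf}\le3\ell\kappa_1(\alpha^*)^2\lesssim1$.

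There is no genuine obstacle here; it is a direct Schur test. The two points to keep in mind are (i) that both the row and column sums of each coefficient array $(s_{ik}^\mu)$, $(t_{ik}^\nu)$, $(t_{ki}^\nu)$ are bounded by $\kappa_1$ — this is exactly what makes the Cauchy--Schwarz step close for the Hilbert--Schmidt norm — and (ii) that conjugating a $K\times K$ matrix by $\alpha_\mu$ or $\beta_\nu$ inflates both its operator norm and its Hilbert--Schmidt norm by at most $(\alpha^*)^2$, by submultiplicativity of the operator norm.
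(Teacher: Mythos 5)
Your proposal is correct and is exactly the "direct estimates from the definition of $\Sf_i$ using \eqref{eq:upper_bound_variances} and \eqref{eq:bound_coefficients}" that the paper's one-line proof alludes to, with the details (submultiplicativity for conjugation, and the Schur test with row and column sums of $(s_{ik}^\mu)$, $(t_{ik}^\nu)$ both bounded by $\kappa_1$) correctly filled in.
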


\begin{proof}
Direct estimates of $\Sf[\af]$ for $\af\in(\C^{K\times K})^N$ starting from the definition of $\Sf_i$, \eqref{eq:sopb_l}, and using the assumptions 
\eqref{eq:upper_bound_variances} and \eqref{eq:bound_coefficients} yield the bounds in \eqref{eq:norm_Sf}.
\end{proof}

Similarly to $\lopb$, we now introduce the stability operator of the Dyson equation in vector form, \eqref{eq:Dyson}. 
In fact, it is defined through
\begin{equation} \label{eq:def_stab_operator_vector}
 \Lf \colon (\C^{K\times K})^N \to (\C^{K\times K})^N, \quad \Lf (r_1, \ldots,r_N)  \defeq (r_i - m_i \Sf_i[\rf]m_i)_{i=1}^N.
\end{equation}
We remark that $\sopb$ and thus $\lopb$ leave the set of block diagonal matrices $\blockdiag$ defined 
in \eqref{eq:def_blockdiag} invariant. 
The operators $\Sf$ and $\Lf$ are the restrictions of $\sopb$ and $\lopb$ to $\blockdiag$. In particular, we have 
\begin{equation} \label{eq:comparing_norms_of_Lf_and_lopb}
 \normsp{\Lf^{-1}} \leq \normsp{\lopb^{-1}}, \qquad \normsp{\lopb^{-1}} \leq \max\{1, \normsp{\Lf^{-1}}\}, \qquad \norm{\Lf^{-1}} \leq \norm{\lopb^{-1}}, 
\end{equation}
since $\lopb$ acts as the identity map on the orthogonal complement $\blockdiag^\perp$ of the block diagonal matrices.
Here, the orthogonal complement is defined with respect to the scalar product on $\tenspace$ introduced in \eqref{eq:def_normHS}.
Moreover, $\Lf$ is invertible if and only if $\lopb$ is invertible. 
Using \eqref{eq:comparing_norms_of_Lf_and_lopb} the bounds on $\lopb$ from Lemma~\ref{lem:Lf_invertible} can be translated into bounds on $\Lf$

\section{Hermitian Kronecker matrices} \label{sec:Hermitian_Kronecker_matrices}

The analysis of a non-Hermitian random matrix usually starts with Girko's Hermitization procedure. 
It provides a technique to extract spectral information about a  non-Hermitian matrix $\Xf$  
 from a family of Hermitian matrices $(\Hf^\zeta)_{\zeta \in \C}$ defined through
\begin{equation} \label{eq:def_Hf_zeta}
 \Hf^\zeta \defeq \begin{pmatrix} 0 & 1\\ 0 & 0 \end{pmatrix} \otimes \Xf + \begin{pmatrix} 0 & 0 \\ 1 & 0 \end{pmatrix} \otimes \Xf^* - \begin{pmatrix} 0 & \zeta \\ \bar \zeta & 0 \end{pmatrix} \otimes \id,
\qquad \zeta \in \C.
\end{equation}
Applying Girko's Hermitization procedure to a Kronecker random matrix $\Xf$ as in \eqref{eq:def_Xf} generates a Hermitian Kronecker matrix $\Hf^\zeta \in \C^{2\times 2} \otimes \C^{L\times L} \otimes \C^{N\times N}$. 
However, similarly to our analysis in Section \ref{sec:Dyson_equation}, we study more general Kronecker matrices $\Hf \in \C^{K\times K}\otimes \C^{N\times N}$ as in \eqref{eq:def_Hf} below for $K, N\in \N$.
This is motivated by the identification $\C^{2\times 2} \otimes \C^{L\times L} \cong \C^{2L\times 2L}$. 

For $K, N \in \N$, let the random matrix $\Hf \in \C^{K\times K}\otimes \C^{N\times N}$ be defined through
\begin{equation} \label{eq:def_Hf}
 \Hf \defeq \sum_{\mu=1}^{\ell} \alpha_\mu \otimes X_\mu + \sum_{\nu=1}^{\ell} 
\left( \beta_\nu \otimes  Y_\nu  + \beta_\nu^* \otimes Y_\nu^* \right) + \sum_{i=1}^N a_i \otimes E_{ii}.
\end{equation}
Furthermore, we make the following assumptions. 
Let $\ell \in\N$. 
For $\mu\in [\ell]$, let $ {\alpha}_\mu\in \C^{ K\times  K}$ be a deterministic Hermitian matrix and $X_\mu = X_\mu^* \in \C^{N \times N}$ a Hermitian random matrix
with centered and independent entries (up to the Hermitian symmetry constraint). 
For $\nu \in [\ell]$, let ${\beta}_\nu \in \C^{ K\times  K}$ be a deterministic matrix and $Y_\nu$ a random matrix with centered and independent entries. 
We also assume that $X_1,\ldots, X_\ell, Y_1, \ldots, Y_\ell$ are independent.
Let $a_1, \ldots, a_N \in \C^{K\times K}$ be some deterministic matrices with negative semidefinite imaginary part.
We recall that $E_{ii}$ was defined in \eqref{eq:def_E_ij} and introduce the expectation $\Af\defeq \E\Hf=\sum_{i=1}^N a_i \otimes E_{ii}$. 

If $\Af$ is a Hermitian matrix then $\Hf$ as in \eqref{eq:def_Hf} with the above properties is a Hermitian Kronecker random matrix in the sense of Definition \ref{def:kronecker_matrix}. 
As in the setup from \eqref{eq:def_Xf}, the matrices $\alpha_1, \ldots \alpha_{\ell},\beta_1, \ldots, {\beta}_{\ell}$ are called \emph{structure matrices}.

Since the imaginary parts of $a_1, \ldots, a_N$ are negative semidefinite, the same holds true for the imaginary part of $\Af$ and $\Hf$.
Hence, the matrix $\Hf-z\id$ is invertible for all $z \in \Hb$.
For $z\in\Hb$, we therefore introduce the resolvent $\Gf(z)$ of $\Hf$ and its ``matrix elements'' $G_{ij}(z)\defeq P_{ij} \Gf \in\C^{K\times K}$ for $i,j\in[N]$ defined through
\[  \Gf(z)\defeq (\Hf - z \id )^{-1}, \qquad \qquad \Gf(z) = \sum_{i,j=1}^N G_{ij}(z) \otimes E_{ij}. \]
We recall that $P_{ij}$ has been defined in \eqref{eq:def_P_ij}.
Our goal is to show that $G_{ij}$ is small for $i\neq j$ and $G_{ii}$ is well approximated by the deterministic matrix $m_i(z) \in \C^{K\times K}$ in the regime where $K \in \N$ is fixed 
and $N \in \N$ is large.

Apart from the above listed qualitative assumptions, we will need the following quantitative assumptions. 
To formulate them we use the same notation as before, i.e., the entries of $X_\mu$ and $Y_\nu$ are denoted by $X_\mu=(x_{ij}^\mu)_{i,j=1}^N$ and $Y_\nu=(y_{ij}^\nu)_{i,j=1}^N$ 
and their variances by $s_{ij}^\mu \defeq \E \abs{x_{ij}^\mu}^2$ and $t_{ij}^\nu \defeq \E\abs{y_{ij}^\nu}^2$ (cf. \eqref{eq:def_variances}).

\begin{assums} \label{assums:Hermitian_local_law}
We assume that all variances $s_{ij}^\mu$ and $t_{ij}^\mu$ satisfy \eqref{eq:upper_bound_variances} and the entries $x_{ij}^\mu$ and $y_{ij}^\nu$ of the random matrices fulfill 
the moment bounds \eqref{eq:moments_bounds}. 
Furthermore, the structure matrices satisfy \eqref{eq:bound_coefficients}.
\end{assums}

In this section, the model parameters are defined to be $K$, $\ell$, $\kappa_1$ from \eqref{eq:upper_bound_variances}, the sequence $(\varphi_p)_{p \in \N}$ from \eqref{eq:moments_bounds} 
and $\alpha^*$ from \eqref{eq:bound_coefficients}, so the relation $\lesssim$ indicates an inequality up to a multiplicative constant depending on these model parameters. 
Moreover, for the real and imaginary part of the spectral parameter $z$ we will write $E=\Re z$ and $\eta = \Im z$, respectively.

\subsection{Error term in the perturbed Dyson equation}

We introduce the notion of \emph{stochastic domination}, a high probability bound up to $N^\eps$ factors. 

\begin{defi}[Stochastic domination] \label{def:stochastic_domination}
If $\Phi = (\Phi^{(N)})_{N}$ and $\Psi = (\Psi^{(N)})_{N}$ 
are two sequences of nonnegative random variables, then we say that $\Phi$ is \textbf{stochastically dominated} by $\Psi$, $\Phi \prec \Psi$, if for all $\eps >0$ and $D>0$ 
there is a constant $C(\eps,D)$ such that  
\begin{equation} \label{eq:def_stoch_domination}
 \P \left( \Phi^{(N)} \geq N^\eps \Psi^{(N)} \right) \leq \frac{C(\eps,D)}{N^D} 
\end{equation}
for all $N \in \N$ and the function $(\eps, D) \mapsto C(\eps,D)$ depends only on the model parameters. 
If $\Phi$ or $\Psi$ depend on some additional parameter $\delta$ and the function $(\eps,D) \mapsto C(\eps,D)$ additionally depends on $\delta$
then we write~$\Phi \prec_\delta\Psi$. 
\end{defi}

We set $h_{ij} \defeq P_{ij} \Hf \in \C^{K \times K}$.
Using $P_{lm}\Af = a_l \delta_{lm}$,  $\E\,x_{ik}^\mu = 0$, $\E\,y_{ik}^\nu=0$, \eqref{eq:upper_bound_variances}, \eqref{eq:bound_coefficients} and 
\eqref{eq:moments_bounds} we trivially obtain 
\begin{equation} \label{eq:trivial_control_x_ij_prec}
\abs{P_{ik} \left( \Hf-\Af\right)} = \abs{h_{ik}-a_i\delta_{ik}} \prec N^{-1/2}. 
\end{equation}
For $B \subset [N]$ we set 
\[ \Hf^B \defeq \sum_{i, j = 1}^N h_{ij}^B \otimes E_{ij}, \qquad h_{ij}^B \defeq h_{ij} \char(i,j \notin B), \]
and denote the resolvent of $\Hf^B$ by
$ \Gf^B(z) \defeq \left( \Hf^B - z \id \right)^{-1}$ for $z \in \Hb$.
Since $\Im \Hf^B = \Im \Af^B \leq 0$ for $B \subset [N]$, the matrix $(\Hf^B - z\id)$ is invertible 
for all $z \in\Hb$ and 
\begin{equation} \label{eq:trivial_bound_resolvent}
 \normtwo{\Gf^B(z)} \leq \frac{1}{\Im z}.
\end{equation}

In the following, we will use the convention 
\[ \sum_{k \in A}^B \defeq \sum_{k \in A \setminus B} \]
for $A, B \subset [N]$ and $B \subset A$. 
If $A = [N]$ then we simply write $\sum_k^B$. 

For $i \in[N]$, starting from the Schur complement formula, 
\begin{equation} \label{eq:Schur_complement_formula} 
-\frac{1}{G_{ii}} = z - h_{ii} + \sum_{k,l}^{\{i\}} h_{ik}G_{kl}^{\{i\}}h_{li}, 
\end{equation}
and using the definition of $\Sf_i$ in \eqref{eq:sopb_l}, we obtain the perturbed Dyson equation
\begin{equation} \label{eq:dyson_perturbed}
-\frac{1}{g_{i}}\,=\, z-a_{i}+ \Sf_i[\gf]+d_{i}.
\end{equation}
Here, we introduced 
\begin{equation} \label{eq:def_resolvent_block_diagonal}
g_{i} \defeq G_{ii}, \qquad \gf\defeq(g_1,\ldots, g_N)\in (\C^{K \times K})^N
\end{equation}
and the error term $d_i\in\C^{K\times K}$. 
We remark that \eqref{eq:dyson_perturbed} is a perturbed version of the Dyson equation in vector form, \eqref{eq:Dyson}, and recall that $\mf$ denotes its 
unique solution (cf. Proposition \ref{pro:exist_unique_M}). 
To represent the error term $d_i$ in \eqref{eq:dyson_perturbed}, we use $ h_{ik} = a_i \delta_{ik} + \sum_{\mu} x_{ik}^\mu \alpha_\mu + \sum_\nu \left(y_{ik}^\nu \beta_\nu + \overline{y_{ki}^\nu} \beta_\nu^* \right)$ 
and write 
$d_{i}\defeq d_i^{(1)} + \ldots + d_i^{(8)}$, where 
\begin{subequations} \label{eq:error_terms}
\begin{align}
d^{(1)}_{i} & \defeq \, - h_{ii}+a_{i}, \label{eq:def_d1}\\
d^{(2)}_{i} & \defeq \, \sum_k^{\{i\}} \Big( \sum_\mu \alpha_\mu G_{kk}^{\{i\}} \alpha_\mu \left( \abs{x_{ik}^\mu}^2 -s_{ik}^\mu \right) + 
 \sum_\nu \left( (\abs{y_{ik}^\nu}^2 - t_{ik}^\nu) \beta_\nu G_{kk}^{\{i\}} \beta_\nu^* + (\abs{y_{ki}^\nu}^2 - t_{ki}^\nu) \beta_\nu^*G_{kk}^{\{i\}}\beta_\nu \right)  \Big) , \label{eq:def_d2} \\
d^{(3)}_{i} & \defeq \, \sum_\nu \sum_k^{\{i\}} \left( y_{ik}^\nu \beta_\nu G_{kk}^{\{i\}} \beta_\nu y_{ki}^\nu+ \overline{y_{ki}^\nu} \beta_\nu^* G_{kk}^{\{i\}} \beta_\nu^* \overline{y_{ik}^\nu}\right) \label{eq:def_d3} \\
d^{(4)}_{i} & \defeq \, 
 \Big( \sum_{\mu = \mu'} \sum_{k \neq l}^{\{i\}} + \sum_{\mu \neq \mu'} \sum_{k,l}^{\{i\}} \Big) \alpha_\mu x_{ik}^\mu G_{kl}^{\{i\}} x_{li}^{\mu'} \alpha_{\mu'} ,\label{eq:def_d4} \\
d^{(5)}_{i} & \defeq \,  \Big(\sum_{\nu = \nu'} \sum_{k\neq l }^{\{i\}} + \sum_{\nu \neq \nu'} \sum_{k,l}^{\{i\}} \Big) 
\left( y_{ik}^\nu \beta_\nu + \overline{y_{ki}^\nu} \beta_\nu^* \right) G_{kl}^{\{i\}} \left( y_{li}^{\nu'} \beta_{\nu'} + \overline{y_{il}^{\nu'}} \beta_{\nu'}^* \right),\label{eq:def_d5} \\
d^{(6)}_{i} & \defeq \, \sum_{k,l}^{\{i\}}  \sum_\mu \sum_\nu \left( \alpha_\mu x_{ik}^\mu G_{kl}^{\{i\}} \left( y_{li}^\nu \beta_\nu + \overline{y_{il}^\nu} \beta_\nu^* \right) 
+ \left(y_{ik}^\nu \beta_\nu + \overline{y_{ki}^\nu} \beta_\nu^* \right) G_{kl}^{\{i\}} x_{li}^\mu \alpha_\mu \right), \label{eq:def_d6}  \\
d^{(7)}_{i} & \defeq \, \sum_k^{\{i\}} \Big( \sum_\mu \alpha_\mu s_{ik}^\mu\left( G_{kk}^{\{i\}} - G_{kk} \right)\alpha_\mu 
+  \sum_\nu \left( t_{ik}^\nu \beta_\nu \left(G_{kk}^{\{i\}}- G_{kk}\right) \beta_\nu^* + t_{ki}^\nu \beta_\nu^*\left(G_{kk}^{\{i\}} - G_{kk}\right)\beta_\nu \right) \Big),\label{eq:def_d7} \\
d^{(8)}_{i} & \defeq \, -\Big( \sum_\mu s_{ii}^\mu \alpha_\mu G_{ii}\alpha_\mu  + 
\sum_\nu t_{ii}^\nu\left(\beta_\nu G_{ii}\beta_\nu^* +  \beta_\nu^* G_{ii} \beta_\nu \right)\Big)  . \label{eq:def_d8}
\end{align}
\end{subequations}

In the remainder of this section, we consider $E=\Re z$ to be fixed and view quantities like $\mf$ and $\Gf$ only 
as a function of $\eta=\Im z$. 
In the following lemma, we will use the following random control parameters to bound the error terms introduced in \eqref{eq:error_terms}:
\begin{equation} \label{eq:def_Lambda_o_Lambda_HS}
\begin{split}
 \Lambda_{\rm{hs}}(\eta) &\defeq \frac{1}{N} \Big[ \Tr \Gf(E+\ii\eta)^*\Gf(E+\ii\eta) \Big]^{1/2}\,
\\ 
 \Lambda_{\rm{w}}(\eta) &\defeq \frac{1}{\sqrt{2N}}\max_{i=1}^N \Big[ \Tr P_{ii}[\Gf(E+\ii\eta)^*\Gf(E+\ii\eta)+\Gf(E+\ii\eta)\Gf(E+\ii\eta)^*]\Big]^{1/2},
 \\
 \Lambda(\eta)&\defeq \max_{i,j=1}^N \2\abs{G_{ij}(E+\ii\eta)- m_i(E+\ii\eta)\2\delta_{ij}}\,.
\end{split}
\end{equation}

We remark that due to our conventions, we have
\[ \norm{\mf} = \max_{i=1}^N \abs{m_i}, \qquad \norm{\mf^{-1}} = \max_{i=1}^N \abs{m_i^{-1}}. \]

\begin{lem} \label{lem:initial_error_estimates}
\begin{enumerate}[(i)]
\item Uniformly for $\eta \geq 1$ and $i\neq j$, we have 
\begin{subequations}
\begin{align}
\abs{d_i} &\, \prec  \, 1, \label{eq:deterministic_bound_error} \\
\abs{G_{ij}} & \, \prec \, \eta^{-2}.  \label{eq:deterministic_offdiagonal}
\end{align}
\end{subequations}
\item 
Uniformly for $\eta >0$, we have
\begin{subequations} \label{eq:initial_error_estimates_bound_G}
\begin{align}
\big(\abs{d^{(1)}_{i}} + \ldots + \abs{d^{(6)}_{i}} \big)\,\chi
& \prec \, \frac{1}{\sqrt{N}}+\Lambda_{\rm{hs}}+ \norm{\mf^{-1}}\Lambda_{\rm{w}}^2, \label{eq:initial_error_estimates_bound_G1}\\ 
\big(\abs{d^{(7)}_{i}} + \abs{d^{(8)}_{i}}\big)\,\chi & \prec \, \norm{\mf^{-1}}\Lambda_{\rm{w}}^2 + \frac{1}{N}\abs{G_{ii}}, \label{eq:initial_error_estimates_bound_G2}
\end{align}
\end{subequations}
where $\chi$ is the characteristic function $\chi\defeq \char(\Lambda \le (4\norm{\mf^{-1}})^{-1})$. 

Moreover, uniformly for $\eta>0$ and $i\neq j$, we have
\begin{equation} \label{eq:initial_bound_offdiag}
\abs{G_{ij}}\,\chi\,\prec\, \norm{\mf} \Lambda_{\rm{w}}.
\end{equation}
\end{enumerate}
\end{lem}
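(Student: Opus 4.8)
The strategy is to establish the two parts by rather different means: part~(i) is a soft deterministic argument exploiting the large-$\eta$ expansion of the resolvent, whereas part~(ii) is the workhorse, relying on large-deviation estimates for linear and quadratic forms in the independent entries $x_{ij}^\mu$, $y_{ij}^\nu$.

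\emph{Part (i): the regime $\eta\ge 1$.} Here I would start from the trivial bound $\normtwo{\Gf^B(z)}\le\eta^{-1}\le 1$ from \eqref{eq:trivial_bound_resolvent}, valid for every $B\subset[N]$. Combined with \eqref{eq:trivial_control_x_ij_prec}, which gives $\abs{h_{ik}-a_i\delta_{ik}}\prec N^{-1/2}$, each of the terms $d_i^{(1)},\dots,d_i^{(8)}$ in \eqref{eq:error_terms} can be bounded directly: the diagonal contributions $d_i^{(2)}, d_i^{(3)}, d_i^{(7)}, d_i^{(8)}$ carry either a centered single coefficient of size $\prec N^{-1}$ or a variance $s_{ik}^\mu, t_{ik}^\nu\lesssim N^{-1}$ summed over $k$, so they are $\prec 1$; the off-diagonal bilinear terms $d_i^{(4)}, d_i^{(5)}, d_i^{(6)}$ are quadratic forms in mean-zero variables with $\normtwo{\Gf^{\{i\}}}\le 1$, so a second-moment (or high-moment) computation gives $\prec 1$ as well, using that each sum over $k,l$ produces at most $N^2$ terms each of size $\prec N^{-1}$ after the $\E\abs{\genarg}^2$ bound. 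This proves \eqref{eq:deterministic_bound_error}. For \eqref{eq:deterministic_offdiagonal}, I would use the resolvent identity in the form $G_{ij}=-\sum_{k}G_{ik}^{\{j\}}h_{kj}G_{jj}$ (Schur/expansion) together with $\normtwo{\Gf},\normtwo{\Gf^{\{j\}}}\le\eta^{-1}$ and $\abs{h_{kj}}\prec N^{-1/2}$ for $k\ne j$ plus the deterministic $\abs{a_j}\le\kappa_3$: the contributions organize into $\eta^{-1}\cdot\eta^{-1}\cdot(\text{something}\prec 1)$, yielding $\abs{G_{ij}}\prec\eta^{-2}$.

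\emph{Part (ii): all $\eta>0$, on the event $\chi$.} This is the heart of the lemma. On $\chi$ we have $\abs{G_{ii}-m_i}\le(4\norm{\mf^{-1}})^{-1}$, hence $\abs{G_{ii}}$ and $\abs{G_{ii}^{-1}}$ are controlled in terms of $\norm{\mf}$, $\norm{\mf^{-1}}$, which is what lets the bounds close. For each $d_i^{(k)}$ I would condition on the $i$-th row/column and apply large-deviation bounds (of the type in e.g.\ \cite{EJP2473} or \cite{Erdos2017Correlated}, adapted to the $\C^{K\times K}$-valued, non-commutative setting by testing against unit vectors in $\C^K$): a linear form $\sum_k c_{ik}^\mu x_{ik}^\mu$ with deterministic $c$ (as arises from $d_i^{(1)}$) is $\prec N^{-1/2}(\sum_k\abs{c_{ik}}^2)^{1/2}$; a centered quadratic form $\sum_{k\ne l}\bar x_{ik} A_{kl} x_{il}$ is $\prec N^{-1}\normhs{A}$ plus a diagonal piece $\prec N^{-1}(\sum_k\abs{A_{kk}}^2)^{1/2}$. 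Applying this with $A_{kl}$ built out of $G_{kl}^{\{i\}}$ and the bounded structure matrices, the Hilbert--Schmidt norm $\normhs{G^{\{i\}}}$ produces the $\Lambda_{\rm hs}$ factor, the diagonal/row quantities produce $\Lambda_{\rm w}$, and the $s_{ik}^\mu\le\kappa_1/N$ weights in $d_i^{(7)}$, $d_i^{(8)}$ produce the $\norm{\mf^{-1}}\Lambda_{\rm w}^2$ and $N^{-1}\abs{G_{ii}}$ terms. One also needs to replace $\Gf^{\{i\}}$ by $\Gf$ in these control parameters, which costs only $\Lambda_{\rm hs}+N^{-1}\norm{\mf^{-1}}\Lambda_{\rm w}^2$-type errors via the rank-one perturbation formula $G_{kl}^{\{i\}}=G_{kl}-G_{ki}G_{ll}/G_{ii}$ and the a priori control on $\chi$; here the factor $\norm{\mf^{-1}}$ in \eqref{eq:initial_error_estimates_bound_G1} enters through $\abs{G_{ii}^{-1}}\lesssim\norm{\mf^{-1}}$. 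Finally \eqref{eq:initial_bound_offdiag} follows from the identity $G_{ij}=-G_{ii}\sum_k(h_{ik}-a_i\delta_{ik})G_{kj}^{\{i\}}$ (up to reorganizing), treating the sum over $k$ as a linear large-deviation form whose variance is $\prec N^{-1}\sum_k\abs{G_{kj}^{\{i\}}}^2\lesssim N^{-1}\cdot N\Lambda_{\rm w}^2$ after identifying the row sum with $\Lambda_{\rm w}$, and bounding $\abs{G_{ii}}\lesssim\norm{\mf}$ on $\chi$.

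\emph{Main obstacle.} The genuinely delicate point is the bookkeeping that keeps every error expressed solely in terms of the three control parameters $\Lambda_{\rm hs},\Lambda_{\rm w},\Lambda$ and the model parameters, \emph{without} any lower bound on the variances: in particular one must not accidentally need $\abs{G_{kk}^{-1}}$ bounds except on the diagonal, and one must route all the quadratic-form estimates through $\normhs{\genarg}$ rather than $\normtwo{\genarg}$ so that the fluctuation-averaging gain of $N^{-1/2}$ is visible. The non-commutativity of the $K\times K$ structure matrices means the usual scalar large-deviation lemmas must be applied entrywise (or coordinate-by-coordinate in $\C^K$) and then reassembled, and care is needed that the constants remain uniform in $N$; this is exactly where the ``tensor matrix structure'' alluded to in the introduction does the heavy lifting.
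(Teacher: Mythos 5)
Your plan matches the paper's proof in all essentials: both parts rest on the linear and quadratic large-deviation bounds of \cite{EJP2473} applied entrywise to the $K\times K$-valued coefficients, on the resolvent identity \eqref{eq:resolvent_identity} to trade $\Gf^{\{i\}}$ for $\Gf$ at the cost of $\Lambda_{\rm{hs}}$- and $\norm{\mf^{-1}}\Lambda_{\rm{w}}^2$-type errors, and on the event $\chi$ to control $G_{ii}^{-1}$ through $m_i$ as in \eqref{bound on MinvG}; the paper merely organizes part (i) as a specialization of the same estimates via the intermediate bound \eqref{eq:bound_d_no_char_function} rather than by a separate second-moment computation. The only slip is notational: the perturbation formula should read $G_{kl}^{\{i\}}=G_{kl}-G_{ki}(G_{ii})^{-1}G_{il}$ (the order matters in the matrix-valued setting), not $G_{kl}-G_{ki}G_{ll}/G_{ii}$.
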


In the proof of Lemma \ref{lem:initial_error_estimates}, we use the following relation between the entries of $\Gf^T$ and $\Gf^{T\cup \{k\}}$ 
\begin{equation} \label{eq:resolvent_identity}
 G_{ij}^T = G_{ij}^{T\cup \{k\}} + G_{ik}^T \frac{1}{G_{kk}^T} G_{kj}^T 
\end{equation}
for $T \subset [N]$, $k \notin T$ and $i,j \notin T\cup \{k\}$. This is an identity of $K\times K$ matrices and $1/G_{kk}^T$ is understood as the inverse matrix of $G_{kk}^T$.
The proof of \eqref{eq:resolvent_identity} follows from the Schur complement formula. 

\begin{proof}
We will prove the bounds in \eqref{eq:initial_error_estimates_bound_G} in parallel with the estimate
\begin{equation} \label{eq:bound_d_no_char_function} 
\abs{d^{(1)}_i} +\ldots + \abs{d^{(8)}_i} \, \prec  \, \frac{1}{\sqrt N} + \frac{1}{N} \Big( \sum_{k,l}^{\{i\}} \abs{G_{kl}^{\{i\}}}^2  \Big)^{1/2}
+ \frac{1}{N} \sum_{k}^{\{i\}}\abs{G^{\{i\}}_{kk}} + \frac{1}{N}\sum_k \abs{G_{kk}} 
\end{equation}
that we will use to show \eqref{eq:deterministic_bound_error}.

The trivial estimate \eqref{eq:trivial_control_x_ij_prec} implies that $\abs{d^{(1)}_{i}}\prec 1/\sqrt{N}$. 

In the remaining part of the proof, we will often apply the large deviation bounds with scalar valued random variables from Theorem C.1 in \cite{EJP2473}. 
In our case, they will be applied to sums or quadratic forms of independent random variables, whose coefficients are $K\times K$ matrices; 
this generalization clearly follows from the scalar case \cite{EJP2473} if applied to each entry separately.

We first show the following estimate
\begin{equation} \label{eq:aux_est_d2_d3}
\abs{d^{(2)}_i} +\abs{d^{(3)}_i} \,\prec\, \frac{1}{\sqrt{N}}\pbb{\frac{1}{N}\sum_k^{\{i\}}\abs{G_{kk}^{\{i\}}}^2}^{1/2}.
\end{equation}
From the linear large deviation bound (C.2) in \cite{EJP2473}, we conclude that the first term in \eqref{eq:def_d2} is bounded by 
\[\sum_\mu \abs{\alpha_\mu}\absB{\sum_k^{\{i\}} G_{kk}^{\{i\}} (\abs{x_{ik}^\mu}^2 -s_{ik}^\mu)} \abs{\alpha_\mu} \prec 
\frac{1}{N} \Big(\sum_k^{\{i\}} \abs{G_{kk}^{\{i\}}}^2\Big)^{1/2}. \]   
The second and third term in \eqref{eq:def_d2} are estimated similarly with the help of (C.2) in \cite{EJP2473} which yields \eqref{eq:aux_est_d2_d3} for~$\abs{d^{(2)}_{i}}$.
We apply the linear large deviation bound (C.2) in \cite{EJP2473} and bound the first term in \eqref{eq:def_d3} as follows:
\[ \absB{\sum_\nu \Big(\sum_k^{\{i\}} y_{ik}^\nu y_{ki}^\nu \beta_\nu G_{kk}^{\{i\}} \beta_\nu \Big)} \prec \frac{1}{N}  \bigg(\sum_k^{\{i\}} \abs{G_{kk}^{\{i\}}} ^2 \bigg)^{1/2}. \] 
The bound on the second term in \eqref{eq:def_d3} is obtained in the same way. Consequently, we have proved \eqref{eq:aux_est_d2_d3}.

Using the quadratic large deviation bounds (C.4) and (C.3) in \cite{EJP2473}, we obtain 
\begin{equation} \label{eq:aux_est_d4_d5_d6}
\abs{d^{(4)}_i} +\abs{d^{(5)}_i}+\abs{d^{(6)}_i} \,\prec\, \pbb{\frac{1}{N^2}\sum_{k,l}^{\{i\}}\abs{G_{kl}^{\{i\}}}^2}^{1/2}.
\end{equation}
Moreover, \eqref{eq:aux_est_d2_d3} and \eqref{eq:aux_est_d4_d5_d6} also imply that $\abs{d^{(2)}_i}+ \ldots +\abs{d^{(6)}_i}$ are bounded by the second term on the right-hand side of \eqref{eq:bound_d_no_char_function}.

Using \eqref{eq:resolvent_identity}, \eqref{eq:upper_bound_variances} and  \eqref{eq:bound_coefficients}, we conclude 
\begin{equation} \label{eq:aux_est_d7}
\abs{d^{(7)}_i}\,\lesssim\, \min\Big\{ \frac{1}{N}\sum_{k}^{\{i\}}\abs{G_{ki}}\absbb{\frac{1}{G_{ii}}}\abs{G_{ik}}, \frac{1}{N}\sum_{k}^{\{i\}}( \abs{G^{\{i\}}_{kk}} + \abs{G_{kk}})  \Big\}.
\end{equation}
The assumptions \eqref{eq:upper_bound_variances} and \eqref{eq:bound_coefficients} imply 
\begin{equation} \label{eq:aux_est_d8}
\abs{d_{i}^{(8)}} \lesssim \abs{G_{ii}}/N. 
\end{equation}
This concludes the proof of \eqref{eq:bound_d_no_char_function}. Applying \eqref{eq:trivial_bound_resolvent} to \eqref{eq:bound_d_no_char_function}, we obtain \eqref{eq:deterministic_bound_error}.

For all $k,l \notin \{i\}$, we now show that 
\begin{equation} \label{eq:aux_estimate_minors}
 \absB{G_{kl}^{\{i\}}}\chi \leq \abs{G_{kl}} + \frac{4}{3} \norm{\mf^{-1}} \abs{G_{ki}} \abs{G_{il}}. 
\end{equation}
This immediately yields \eqref{eq:initial_error_estimates_bound_G1} using \eqref{eq:aux_est_d2_d3} and \eqref{eq:aux_est_d4_d5_d6}.
For the proof of \eqref{eq:aux_estimate_minors}, we conclude from \eqref{eq:resolvent_identity} by dividing and multiplying the second term by $m_i$ that
\begin{equation}\label{eq:one_removed}
G_{kl}^{\{i\}} = G_{kl} - G_{ki}\frac{1}{G_{ii}}m_i \frac{1}{m_i} G_{il}.
\end{equation}
From the definition of $\chi$ in Lemma~\ref{lem:initial_error_estimates}, we see that
\begin{equation} \label{bound on MinvG}
\absbb{\frac{1}{m_i}G_{ij}-\delta_{i j}}\,\chi \le  \frac{1}{4}\,, \qquad \absbb{\frac{1}{G_{ii}}m_i}\,\chi \,\le\, \frac{4}{3}\,,
\end{equation}
which proves \eqref{eq:aux_estimate_minors} and hence \eqref{eq:initial_error_estimates_bound_G1}.

Since \eqref{eq:initial_error_estimates_bound_G2} is established for $\abs{d_i^{(8)}}$ (cf. \eqref{eq:aux_est_d8}), 
it suffices to use the second bound in \eqref{bound on MinvG} to finish the proof of \eqref{eq:initial_error_estimates_bound_G2}
by estimating $\abs{d_i^{(7)}}$ via the first term in \eqref{eq:aux_est_d7}.

We now show \eqref{eq:initial_bound_offdiag} and \eqref{eq:deterministic_offdiagonal}. The identity 
\[
G_{i j}\,=\, -\sum_k^{\{j\}}G_{ik}^{\{j\}} h_{k j}G_{jj}
\]
and the linear large deviation bound (C.2) in \cite{EJP2473} imply 
\begin{equation} \label{eq:aux_estimate_offdiag}
\abs{G_{ij}}\,\prec\,\pbb{ \frac{1}{N}\sum_k^{\{j\}}\abs{G_{ik}^{\{j\}}}^2}^{1/2}\abs{G_{jj}}. 
\end{equation}
Using \eqref{eq:trivial_bound_resolvent} to estimate $\abs{G_{ik}^{\{j\}}}$ and $\abs{G_{jj}}$, we obtain \eqref{eq:deterministic_offdiagonal}. 
Applying the estimate \eqref{eq:aux_estimate_minors} and the definition of $\chi$ in \eqref{eq:aux_estimate_offdiag} yield
$\abs{G_{ij}}\chi \prec \abs{G_{jj}}\chi \Lambda_\rm{w}$.
Hence, the second bound in \eqref{bound on MinvG} implies \eqref{eq:initial_bound_offdiag} and conclude the proof of Lemma~\ref{lem:initial_error_estimates}.
\end{proof}

For the following computations, we recall the definition of the product and the imaginary part on $(\C^{K\times K})^N$ from \eqref{eq:def_function_vector_matrices} 
and \eqref{eq:def_product_vector_matrices}, respectively. 

The proof of the following Lemma \ref{lem:Gap_Lemma} is based on inverting the stability operator in the difference equation 
describing $\gf-\mf$ in terms of $\df$. We derive this equation first. 
Subtracting \eqref{eq:Dyson} from \eqref{eq:dyson_perturbed} and multiplying the result from the left by $m_i$ and 
from the right by $g_i$ yield
\[ g_i - m_i = m_i \Sf_i[\gf-\mf] m_i + m_i d_i g_i + m_i \Sf_i[\gf-\mf](g_i - m_i) \]
for $i \in [N]$. 
Introducing $\df =(d_1, \ldots, d_N) \in (\C^{K\times K})^N$ as well as recalling $\Sf[\rf] = (\Sf_i[\rf])_{i=1}^N$, the definition of $\Sf_i$ from \eqref{eq:sopb_l} 
and $\Lf[\rf] = \rf - \mf \Sf[\rf]\mf$ from \eqref{eq:def_stab_operator_vector}, we can write
\begin{equation} \label{eq:stab_vector}
 \Lf(\gf-\mf) = \mf \df \gf + \mf \Sf[\gf-\mf] (\gf-\mf). 
\end{equation}
Since $\Lf$ is invertible for $z\in\Hb$ by Lemma \ref{lem:Lf_invertible} (\ref{enu_item:Lf_invertible}) and \eqref{eq:comparing_norms_of_Lf_and_lopb}, 
applying the inverse of $\Lf$ on both sides of \eqref{eq:stab_vector} and estimating the norm yields
\begin{equation} \label{eq:norm_stab_equation_vector}
\norm{\gf-\mf}\,\le\, \norm{\Lf^{-1}}\norm{\mf}(\norm{\df} \norm{\gf}+ \norm{\Sf} \norm{\gf-\mf}^2)
\end{equation}

We recall the definition of $\rho$ from \eqref{eq:def_rho}.

\begin{lem} \label{lem:Gap_Lemma}
\begin{enumerate}[(i)]
\item Uniformly for $\eta \geq \max\{1, \abs{E}, \normtwo{\Af}\}$, we have  
\begin{equation} \label{eq:behaviour_lambda_large_eta}
\Lambda \prec \eta^{-2}.
\end{equation}
\item Uniformly for $\eta  > 0$, we have
\begin{equation} \label{Lambda gap bound}
\norm{\gf-\mf} \, \chi\p{\Lambda \le \vartheta} \,\prec\, \norm{\Lf^{-1}}\norm{\mf}^2\left(\frac{1}{\sqrt{N}}  +  \Lambda_{\rm{hs}}+\norm{\mf^{-1}}\Lambda_{\rm{w}}^2 \right),
\end{equation}
where 
\bels{def of vartheta}{
\vartheta\,\defeq\, \frac{1}{4(
\norm{\Lf^{-1}}\norm{\mf} \norm{\Sf}+ \norm{\mf^{-1}})}\,.
}
\item Let $a_1, \ldots, a_N$ be Hermitian. We define
\begin{align*}
\psi\,&\defeq\,\norm{\Lf^{-1}}\norm{\mf}^2\norm{\mf^{-1}}\frac{1}{N \eta}\,,
\\
\varphi\,&\defeq\, \norm{\Lf^{-1}}\norm{\mf}^2 
\left(\frac{1}{\sqrt{N}}  +   \sqrt{\frac{\rho}{N \eta}} + \norm{\Lf^{-1}}\norm{\mf}^2\frac{1}{N \eta} 
+  \frac{\norm{\mf^{-1}}}{N\eta} \norm{\im \mf} \right)   
+\norm{\mf} \left( \sqrt{\frac{\norm{\Im \mf}}{N \eta}}  + \frac{\norm{\mf}}{N\eta} \right)\,.
\end{align*}
Then for all $\delta>0$ and uniformly for all $\eta>0$ such that $\psi(\eta) \le N^{-\delta}$ we have 
\begin{equation} \label{eq:gap_lemma_bound_hermitian}
\Lambda \, \chi\p{\Lambda \le \vartheta} \,\prec_\delta\,\varphi\,.
\end{equation}
\end{enumerate}
\end{lem}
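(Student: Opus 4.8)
The plan is to handle all three parts of Lemma~\ref{lem:Gap_Lemma} by inverting the stability operator $\Lf$ in the difference equation \eqref{eq:stab_vector}, i.e. by repeatedly using \eqref{eq:norm_stab_equation_vector}, together with the bounds on $\Lf^{-1}$ from Lemma~\ref{lem:Lf_invertible} (transferred via \eqref{eq:comparing_norms_of_Lf_and_lopb}), the bounds $\norm{\Sf}\lesssim1$ from Lemma~\ref{lem:norm_Sf}, the a~priori resolvent bound \eqref{eq:trivial_bound_resolvent}, and the error estimates of Lemma~\ref{lem:initial_error_estimates}; in part~(iii) the extra ingredient is a family of Ward identities that rewrite $\Lambda_{\rm{hs}}$ and $\Lambda_{\rm{w}}$ in terms of $\rho$, $\norm{\im\mf}$ and $\Lambda$ itself. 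For part~(i), in the regime $\eta\geq\max\{1,\abs{E},\normtwo{\Af}\}$ one has $\norm{\Lf^{-1}}\lesssim1$ by Lemma~\ref{lem:Lf_invertible}\eqref{enu_item:Lf_invertible}, $\norm{\mf}=\normtwo{\Mf}\leq\disvz^{-1}\leq\eta^{-1}$ by \eqref{eq:Mf_upper_bound_eta} (since $\supmeas\subset\R$ forces $\disvz\geq\eta$), $\norm{\mf^{-1}}\lesssim\eta$ by \eqref{eq:bound_Mf_inverse}, $\norm{\df}\prec1$ by \eqref{eq:deterministic_bound_error}, and $\norm{\gf}\leq\eta^{-1}$. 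Inserting the trivial bound $\norm{\gf-\mf}\leq\norm{\gf}+\norm{\mf}\leq2\eta^{-1}$ into the quadratic term of \eqref{eq:norm_stab_equation_vector} makes it $\lesssim\eta^{-3}$, so $\norm{\gf-\mf}\prec\eta^{-2}$; combined with \eqref{eq:deterministic_offdiagonal} for the off-diagonal entries this gives $\Lambda\prec\eta^{-2}$.

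For part~(ii) I would work on the event $\{\Lambda\leq\vartheta\}$. There $\norm{\gf-\mf}\leq\Lambda\leq\vartheta$, so by the definition \eqref{def of vartheta} of $\vartheta$ we get $\norm{\Lf^{-1}}\norm{\mf}\norm{\Sf}\norm{\gf-\mf}\leq\tfrac14$; feeding this into \eqref{eq:norm_stab_equation_vector} absorbs the quadratic term and yields $\norm{\gf-\mf}\leq\tfrac43\norm{\Lf^{-1}}\norm{\mf}\norm{\df}\norm{\gf}$. Since $\vartheta\leq(4\norm{\mf^{-1}})^{-1}$, the event $\{\Lambda\leq\vartheta\}$ is contained in $\{\Lambda\leq(4\norm{\mf^{-1}})^{-1}\}$, so the error estimates \eqref{eq:initial_error_estimates_bound_G} are available; using $\abs{g_i}\,\chi\leq\abs{m_i}+\Lambda\lesssim\norm{\mf}$, hence $\norm{\gf}\,\chi\lesssim\norm{\mf}$, and absorbing the lower-order term $\abs{G_{ii}}/N\lesssim\norm{\mf}/N$ from \eqref{eq:initial_error_estimates_bound_G2}, this produces \eqref{Lambda gap bound}.

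Part~(iii) is the technical heart. Because all $a_j$ are Hermitian, $\Hf$ is Hermitian, so $\Gf(z)$ commutes with $\Gf(z)^*$ and the Ward identity $\Gf^*\Gf=\Gf\Gf^*=\eta^{-1}\Im\Gf$ holds; evaluating it block-wise and using the normalizations in \eqref{eq:def_Lambda_o_Lambda_HS} gives the exact relations $\Lambda_{\rm{hs}}^2\sim\frac{1}{N\eta}\avg{\Im\Gf}$ and $\Lambda_{\rm{w}}^2\sim\frac{1}{N\eta}\max_i\abs{\Tr\Im G_{ii}}$. Writing $\Im G_{ii}=\Im m_i+\Im(G_{ii}-m_i)$, bounding $\abs{\im(G_{ii}-m_i)}\leq\Lambda$, and recalling $\rho=\pi^{-1}\avg{\im\Mf}$ from \eqref{eq:def_rho}, one obtains $\Lambda_{\rm{hs}}^2\lesssim\frac{\rho+\Lambda}{N\eta}$ and $\Lambda_{\rm{w}}^2\lesssim\frac{\norm{\im\mf}+\Lambda}{N\eta}$. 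Substituting these (and $\Lambda_{\rm{w}}\lesssim\sqrt{\norm{\im\mf}/(N\eta)}+\sqrt{\Lambda/(N\eta)}$) into \eqref{Lambda gap bound}, and adding the off-diagonal contribution $\max_{i\neq j}\abs{G_{ij}}\,\chi\prec\norm{\mf}\Lambda_{\rm{w}}$ from \eqref{eq:initial_bound_offdiag} needed to recover all of $\Lambda$, one arrives at a self-referential bound of the shape
\[
\Lambda\,\chi\p{\Lambda\leq\vartheta}\;\prec\;\varphi_0\;+\;\psi\,\Lambda\;+\;\bigl(\norm{\Lf^{-1}}\norm{\mf}^2+\norm{\mf}\bigr)\sqrt{\tfrac{\Lambda}{N\eta}}\,,
\]
where $\varphi_0$ collects exactly the $\Lambda$-free summands $\norm{\Lf^{-1}}\norm{\mf}^2(\frac{1}{\sqrt N}+\sqrt{\rho/(N\eta)}+\norm{\mf^{-1}}\norm{\im\mf}/(N\eta))+\norm{\mf}\sqrt{\norm{\im\mf}/(N\eta)}$ of $\varphi$. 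On the event $\{\Lambda\leq\vartheta\}$ one has $\Lambda=\Lambda\,\chi\p{\Lambda\leq\vartheta}$, so the term $\psi\,\Lambda$ is absorbed into the left-hand side using the hypothesis $\psi\leq N^{-\delta}$ (with the $N^\eps$-factor of the $\prec$-relation shrunk below $N^\delta$), and each $C\sqrt{\Lambda/(N\eta)}$ is split by Young's inequality so that the $\Lambda$-part is absorbed while the residual $C^2/(N\eta)$ is left over; with $C=\norm{\Lf^{-1}}\norm{\mf}^2$ and $C=\norm{\mf}$ these residuals are precisely the remaining summands $\norm{\Lf^{-1}}^2\norm{\mf}^4/(N\eta)$ and $\norm{\mf}^2/(N\eta)$ of $\varphi$. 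Rearranging gives \eqref{eq:gap_lemma_bound_hermitian}.

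The main obstacle is exactly this self-consistent absorption in part~(iii): $\Lambda$ re-enters the right-hand side through the Ward identities, so one must exploit $\psi\leq N^{-\delta}$ to kill $\psi\Lambda$, split the $\sqrt{\Lambda/(N\eta)}$ terms so that absorption leaves behind precisely the extra $\varphi$-summands and nothing larger, and carry this out within the $\prec$-calculus (on a single high-probability event, with $N^\eps$ replaced by $N^{\eps'}$, $\eps'<\delta$) so that all $\Lambda$-terms are genuinely absorbable. A secondary point needing care is that \eqref{Lambda gap bound} controls only $\norm{\gf-\mf}$, so the off-diagonal part of $\Lambda$ must be inserted separately via \eqref{eq:initial_bound_offdiag}, and the Hermiticity of the $a_j$ is what makes the Ward identities — and hence the appearance of $\rho$ and $\norm{\im\mf}$ rather than cruder quantities — available in the first place.
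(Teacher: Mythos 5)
Your proposal follows essentially the same route as the paper's proof: part (i) via the a priori bounds $\norm{\gf},\norm{\mf}\le\eta^{-1}$ fed into \eqref{eq:norm_stab_equation_vector}, part (ii) by using $\vartheta$ to absorb the quadratic term on the event $\{\Lambda\le\vartheta\}$, and part (iii) via the Ward identity followed by a self-consistent absorption of the $\Lambda$-dependent terms using $\psi\le N^{-\delta}$ and Young's inequality, with the residuals accounting exactly for the extra summands of $\varphi$. The only minor deviation is that in (ii) you bound $\abs{G_{ii}}/N$ by $\norm{\mf}/N$ rather than by $\Lambda_{\rm{hs}}$ as the paper does; this still works, since on the event $\chi=1$ one has $\Lambda_{\rm{hs}}\ge N^{-1}\abs{G_{ii}}\gtrsim N^{-1}\norm{\mf}$, so the term is indeed dominated by the right-hand side of \eqref{Lambda gap bound}.
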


Note that the proof of (iii) of Lemma \ref{lem:Gap_Lemma} requires  
$\Hf$ to be Hermitian because of the use of the Ward identity, $\Gf(\eta)^*\Gf(\eta) =\eta^{-1} \Im\Gf(\eta)$. 
The Ward identity implies $P_{ii} \Gf^*\Gf = P_{ii} \Gf\Gf^* = \Im G_{ii}/\eta$ and hence,  
\begin{equation} \label{eq:ward_identity}
\Lambda_{\rm{hs}}\,=\, \sqrt{\frac{\avg{\im \bs{G}}}{N \eta}}, 
\qquad \Lambda_{\rm{w}}\,=\,\max_i\sqrt{\frac{ \im \Tr G_{ii} }{N \eta}}.
\end{equation}

\begin{proof}
We start with the proof of \eqref{eq:behaviour_lambda_large_eta}. 
We remark that $\norm{\gf}+\norm{\mf} \leq 2/\eta$ by \eqref{eq:trivial_bound_resolvent} and \eqref{eq:Mf_upper_bound_eta}. 
Therefore, for $\eta \geq \max\{1,\abs{E},\normtwo{\Af}\}$, we conclude from \eqref{eq:norm_stab_equation_vector} that 
\[ \norm{\gf-\mf} \lesssim \frac{1}{\eta^2} \norm{\df} + \frac{1}{\eta^3}. \]
Here, we also used \eqref{eq:norm_Lf_large_eta}, \eqref{eq:comparing_norms_of_Lf_and_lopb} and \eqref{eq:norm_Sf}. 
Since $\norm{\df} \prec 1$ by \eqref{eq:deterministic_bound_error}, we get 
$\norm{\gf-\mf} \prec \eta^{-2}$ in this $\eta$-regime.
Hence, combined with the bound \eqref{eq:deterministic_offdiagonal} for the offdiagonal terms, 
we obtain \eqref{eq:behaviour_lambda_large_eta}. 

For the proof of (ii), we also start from \eqref{eq:norm_stab_equation_vector}.
Since $2\norm{\Lf^{-1}}\norm{\mf} \norm{\Sf}\vartheta\le1$ by definition of $\vartheta$ (cf. \eqref{def of vartheta}) and $\norm{\gf}\char(\Lambda \leq \vartheta) \leq \norm{\mf} \norm{\mf^{-1}\gf}\char(\Lambda \leq \vartheta)
 \leq 4\norm{\mf}/3$ 
by the second bound in \eqref{bound on MinvG}, we conclude that
\bels{Lambda stability bound}{
\norm{\gf-\mf}\,\chi\p{\Lambda \le \vartheta}\,\le\,8 \norm{\Lf^{-1}}\norm{\mf}\norm{\df}\norm{\mf}/3 \,.
}
Applying \eqref{eq:initial_error_estimates_bound_G} to the right hand side and using $\abs{G_{ii}} \leq \sqrt{N} \Lambda_\rm{hs}$, we obtain \eqref{Lambda gap bound}.

For the proof of (iii), let now $\Hf$ be Hermitian. Therefore, \eqref{eq:ward_identity} is applicable and yields
\[
\Lambda_{\rm{hs}}\,=\, \sqrt{\frac{\avg{\im \bs{G}}}{N \eta}}\lesssim \sqrt{\frac{\rho}{N \eta}} + \frac{1}{\eps}\frac{1}{N \eta} + \eps \norm{\gf-\mf}  \,,
\qquad \Lambda_{\rm{w}}^2\,=\,\bigg(\max_{i=1}^N\sqrt{\frac{ \im \Tr G_{ii} }{N \eta}}\,\, \bigg)^2 \leq \frac{\norm{\im \mf}}{N \eta} +\frac{\norm{\gf-\mf}}{N \eta}.
\]
Here, we used $\avg{\Im \Gf} \leq \avg{\Im \Mf} + \norm{\gf-\mf}$, $\avg{\Im \Mf} = \pi \rho$ and Young's inequality as well as introduced an arbitrary $\eps>0$ in the first estimate. 
We plug these estimates into the right-hand side of \eqref{Lambda gap bound} and choose $\eps\,\defeq \, N^{-\gamma}/(\norm{\Lf^{-1}}\norm{\mf}^2)$ for arbitrary $\gamma>0$.
Thus, we can absorb $\norm{\gf-\mf}$ in the estimate on $\Lambda_\rm{hs}$ into the left-hand side of \eqref{Lambda gap bound}. Similarly, using $\psi(\eta) \leq N^{-\delta}$ we absorb $\norm{\gf-\mf}$ in the estimate
on $\Lambda_\rm{w}$ into the left-hand side of \eqref{Lambda gap bound}. 
This yields \eqref{eq:gap_lemma_bound_hermitian} for the contribution of the diagonal entries to $\Lambda$.

For the offdiagonal entries, we use the second relation in \eqref{eq:ward_identity} and get as before 
\[
 \Lambda_\rm{w} = \max_{i=1}^N\sqrt{\frac{ \im \Tr G_{ii} }{N \eta}} \leq \sqrt{\frac{\norm{\Im \mf}}{N\eta}} + \frac{1}{\eps} \frac{1}{N\eta} + \eps \Lambda.
\]
Using this estimate in \eqref{eq:initial_bound_offdiag} and choosing $\eps \defeq N^{-\gamma}/\norm{\mf}$ to absorb $\Lambda$ into the left-hand side, 
we obtain \eqref{eq:gap_lemma_bound_hermitian} for diagonal and offdiagonal entries of $\Gf$. This concludes the proof of Lemma \ref{lem:Gap_Lemma}.
\end{proof}

\begin{lem}[Averaged local law] \label{lem:avg_tech_local_law}
Suppose for some deterministic control parameter $0<\Phi\le N^{-\eps}$ a local law holds in the form
\begin{equation} \label{eq:assumption_averaged_local_law}
\Lambda \,\prec\,\frac{\Phi}{\norm{\mf^{-1}}}\,.
\end{equation}
Then for any deterministic $c_1, \ldots, c_N\in\C^{K\times K}$ with $\max_i\abs{c_i} \le 1 $ we have
\begin{equation} \label{eq:local_law_average_general}
\absB{\frac{1}{N} \sum_{i=1}^N c_i^*(G_{ii} -m_i)} \,\prec\, \norm{(\Lf^{-1})^*}\norm{\mf}\pbb{ \frac{\Phi^2}{\norm{\mf^{-1}}^2} +
\max\cbb{\frac{1}{\sqrt{N}},\Phi}\Phi+
\frac{\norm{\mf}^2}{N} + \Lambda^2_\rm{w} \norm{\mf}\norm{\mf^{-1}}}.
\end{equation}
\end{lem}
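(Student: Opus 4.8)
The plan is to test the vector stability identity \eqref{eq:stab_vector} against a deterministic vector and invert $\Lf$ onto it. First, since $K$ is a model parameter and, for any $b\in\C^{K\times K}$ with $\abs{b}\le1$,
\[
\frac1K\,\Tr\!\Big(b^*\,\frac1N\sum_{i=1}^N c_i^*(G_{ii}-m_i)\Big)=\scalar{\cf^b}{\gf-\mf},\qquad \cf^b\defeq(c_1b,\dots,c_Nb),\ \max_i\abs{c_i^b}\le1,
\]
it suffices (by duality between operator and trace norm on $\C^{K\times K}$) to bound $\absb{\scalar{\cf}{\gf-\mf}}$ uniformly over deterministic $\cf=(c_1,\dots,c_N)$ with $\max_i\abs{c_i}\le1$. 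Applying $\Lf^{-1}$ to \eqref{eq:stab_vector} (legitimate by Lemma~\ref{lem:Lf_invertible}(i) and \eqref{eq:comparing_norms_of_Lf_and_lopb}) and moving it to the other side of the scalar product gives
\[
\scalar{\cf}{\gf-\mf}=\scalarb{(\Lf^{-1})^*\cf}{\mf\df\gf}+\scalarb{(\Lf^{-1})^*\cf}{\mf\Sf[\gf-\mf](\gf-\mf)},
\]
and I set $\bb\defeq(\Lf^{-1})^*\cf$, so $\norm{\bb}\le\norm{(\Lf^{-1})^*}$. I work on the event $\{\Lambda\le(4\norm{\mf^{-1}})^{-1}\}$, which by \eqref{eq:assumption_averaged_local_law} and $\Phi\le N^{-\eps}$ has probability $1-\ord(N^{-D})$ for every $D$; there the characteristic function $\chi$ of Lemma~\ref{lem:initial_error_estimates} equals $1$ and $\norm{\gf}\le\tfrac43\norm{\mf}$ by the second bound in \eqref{bound on MinvG}, while the complementary event contributes negligibly in the $\prec$ sense thanks to \eqref{eq:trivial_bound_resolvent}.

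The quadratic self-energy term is immediate: $\absb{\scalar{\bb}{\mf\Sf[\gf-\mf](\gf-\mf)}}\le\norm{\bb}\norm{\mf}\norm{\Sf}\norm{\gf-\mf}^2\lesssim\norm{(\Lf^{-1})^*}\norm{\mf}\,\Phi^2\norm{\mf^{-1}}^{-2}$, using $\norm{\Sf}\lesssim1$ (Lemma~\ref{lem:norm_Sf}), $\norm{\gf-\mf}\le\Lambda$ and \eqref{eq:assumption_averaged_local_law}; this is the first term of \eqref{eq:local_law_average_general}. For the linear term I split $\df=\df^{(1)}+\dots+\df^{(8)}$. The part $\df^{(7)}+\df^{(8)}$ needs no cancellation: $\absb{\scalar{\bb}{\mf(\df^{(7)}+\df^{(8)})\gf}}\le\norm{\bb}\norm{\mf}\norm{\df^{(7)}+\df^{(8)}}\norm{\gf}\prec\norm{(\Lf^{-1})^*}\norm{\mf}^2\big(\norm{\mf^{-1}}\Lambda_{\mathrm{w}}^2+\norm{\mf}/N\big)$ by \eqref{eq:initial_error_estimates_bound_G2} (and $\max_i\abs{G_{ii}}\lesssim\norm{\mf}$ on our event), which produces the terms $\norm{(\Lf^{-1})^*}\norm{\mf}\big(\Lambda_{\mathrm{w}}^2\norm{\mf}\norm{\mf^{-1}}+\norm{\mf}^2/N\big)$. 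The piece $\df^{(1)}$ is handled by a plain law of large numbers for the independent centered matrices $h_{ii}-a_i$.

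The heart of the proof is the fluctuation-averaging bound for $\df^{(2)}+\dots+\df^{(6)}$. The pointwise estimate $\norm{\df^{(\le6)}}\prec N^{-1/2}+\Lambda_{\mathrm{hs}}+\norm{\mf^{-1}}\Lambda_{\mathrm{w}}^2$ of Lemma~\ref{lem:initial_error_estimates} is only linear in the size of the fluctuations and does not suffice; instead I exploit that $\scalarb{\bb}{\mf\df^{(\le6)}\gf}$ is a normalized sum $\tfrac1N\sum_i$ in which, conditionally on $\Hf^{\{i\}}$, each $d_i^{(a)}$ is a centered linear or quadratic form in the (independent across $i$) $i$-th row/column of $\Hf$ with matrix coefficients built from $\Gf^{\{i\}}$, the matrices $\bb_i,m_i,g_i$ being, after a further resolvent expansion via \eqref{eq:resolvent_identity}, essentially independent of that row. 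A high-moment estimate of $\E\absb{\tfrac1N\sum_i(\cdot)_i}^{2p}$ with the standard ``new index'' counting then gains one extra factor $N^{-1/2}$ per independent summand and yields a bound of the form $\norm{(\Lf^{-1})^*}\norm{\mf}\big(\max\{N^{-1/2},\Phi\}\Phi+\Lambda_{\mathrm{hs}}^2+\norm{\mf^{-1}}^2\Lambda_{\mathrm{w}}^4\big)$; here the double sum $\tfrac1{N^2}\sum_{i,k}$ hidden in $\df^{(2)},\df^{(3)}$ is the source of the $\max\{N^{-1/2},\Phi\}\Phi$ term, and the last two terms are reabsorbed into $\norm{\mf}^2/N$ and $\Phi^2\norm{\mf^{-1}}^{-2}$ via the crude bounds $\Lambda_{\mathrm{hs}}^2+\Lambda_{\mathrm{w}}^2\lesssim\norm{\mf}^2/N+\Lambda^2$ (consequences of $\abs{G_{ii}}\le\norm{\mf}+\Lambda$, $\abs{G_{ij}}\le\Lambda$ for $i\neq j$) together with \eqref{eq:assumption_averaged_local_law}. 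Collecting all contributions gives \eqref{eq:local_law_average_general}. I expect this fluctuation-averaging step to be the main obstacle: one must organize the moment expansion in the noncommutative $K\times K$-valued setting, control all $\Gf\leftrightarrow\Gf^{\{i\}}$ (and further) replacements so that the coefficient matrices are genuinely decoupled from the summed row, and verify that the index counting really produces the advertised extra power of $N^{-1/2}$ for each of the linear terms $\df^{(2)},\df^{(3)}$ and the bilinear terms $\df^{(4)},\df^{(5)},\df^{(6)}$.
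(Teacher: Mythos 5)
Your proposal follows essentially the same route as the paper: test the stability identity \eqref{eq:stab_vector} against a deterministic vector, move $(\Lf^{-1})^*$ onto it, bound the quadratic self-energy term by $\norm{\Sf}\Lambda^2$, treat $\df^{(7)}+\df^{(8)}$ by the pointwise bound \eqref{eq:initial_error_estimates_bound_G2}, and gain the extra smallness for the genuinely fluctuating part by a high-moment fluctuation-averaging argument. The one place where the paper is cleaner and you should adjust: rather than running the moment method term by term on $d_i^{(2)},\dots,d_i^{(6)}$ with the random factor $g_i=G_{ii}$ still attached (and decoupling it by further resolvent expansions), the paper observes from the Schur complement formula that $d_i^{(1)}+\cdots+d_i^{(6)}=Q_i(1/G_{ii})$ with $Q_i=1-\E_i$, and splits $d_iG_{ii}=\big(Q_i\tfrac{1}{G_{ii}}\big)m_i+\big(Q_i\tfrac{1}{G_{ii}}\big)(G_{ii}-m_i)+(d_i^{(7)}+d_i^{(8)})G_{ii}$; the middle term is $\prec\Phi^2$ directly from \eqref{eq:assumption_averaged_local_law}, and the first term has a purely deterministic coefficient, so it is exactly the object handled by Proposition~\ref{pro:fluctuation_averaging} (stated inside the proof and proven in Section~\ref{sec:FA}), yielding $\max\{N^{-1/2},\Phi\}\Phi$ as you predict. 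Your version would also work, but the fluctuation-averaging step — which you correctly identify as the crux and which carries all the technical weight (the gain is not literal independence of the summands but the fact that decoupling a lone index via \eqref{eq:resolvent_identity} costs an extra off-diagonal resolvent factor of size $\Phi$) — is left as a sketch in your write-up, whereas in the paper it is a self-contained proposition whose proof occupies its own section.
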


In \eqref{eq:local_law_average_general}, the adjoint of $\Lf^{-1}$ is understood with respect to the scalar product 
$\Tr(\xf\cdot \yf)$, where we defined the dot-product $\xf \cdot \yf$ 
for $\xf =(x_1, \ldots, x_N)$, $\yf = (y_1, \ldots, y_N) \in (\C^{K\times K})^N$ 
via
\begin{equation} \label{eq:def_matrix_valued_scalar_product}
 \xf\cdot \yf \defeq \frac{1}{N}\sum_{i=1}^N x_i^* y_i  \in \C^{K\times K}.
\end{equation}
It is easy to see that $\xf \cdot \Lf^{-1} \yf = ((\Lf^{-1})^* \xf) \cdot \yf$. 

\begin{proof}
We set $\cf \defeq (c_1, \ldots, c_N)$ and recall $\gf = (G_{11}, \ldots, G_{NN}) \in (\C^{K\times K})^N$. Using \eqref{eq:stab_vector}, we compute 
\begin{equation}\label{eq:loc_law_average_aux2}
 \frac{1}{N}\sum_{i=1}^N c_i^* (G_{ii}-m_i)  = \cf\cdot (\gf-\mf) = (\mf^*(\Lf^{-1})^*[\cf])\cdot (\df\gf + \Sf[\gf-\mf](\gf-\mf)). 
\end{equation}
We rewrite the term $\df\gf$ next. Indeed, a straightforward computation starting from the Schur complement formula \eqref{eq:Schur_complement_formula} 
shows that 
\begin{equation}\label{eq:loc_law_average_aux3}
 d_{i} G_{ii}= \Big(Q_i \frac{1}{G_{ii}} \Big)G_{ii} + (d^{(7)}_{i} +  d^{(8)}_{i})G_{ii}  = 
\Big(Q_i \frac{1}{G_{ii}} \Big)m_i+ \Big(Q_i \frac{1}{G_{ii}} \Big)\left( G_{ii}-m_i\right) + (d^{(7)}_{i} +  d^{(8)}_{i})G_{ii},
\end{equation}
where we defined $Q_i Z \defeq Z - \E_i Z$ and the conditional expectation 
\[ \E_i Z \defeq \E [ Z | \Hf^{\{i\}}]=\E\big[ Z \big| \{ x^\mu_{kl}, y^\nu_{kl} \colon k, l \in [N]\setminus \{i\}, \mu,\nu \in [\ell]\} \big] \] 
for any random variable $Z$.

The advantage of the representation \eqref{eq:loc_law_average_aux3} is that we can apply the following proposition to the first term on the right-hand side.
It shows that when $Q_i(1/G_{ii})$ is averaged in $i$, there are certain cancellations taking place such that the average has a smaller order than $Q_i (1/G_{ii}) = O(\Lambda)$. 
The first statement of this type was proved for generalized Wigner matrices in \cite{EYYBern}. The complete proof in our setup will be presented in Section \ref{sec:FA}. 

\begin{pro}[Fluctuation Averaging] \label{pro:fluctuation_averaging}
Let $\Phi$ be a deterministic control parameter such that $  0< \Phi \le N^{-\eps}$. 
If 
\bels{FA assumption}{
\max_{i , j}\absbb{\frac{1}{m_i}G_{ij}-\delta_{ij}} \,\prec\, \Phi\, ,
}
then for any deterministic $c_1, \ldots, c_N \in \C^{K\times K}$ satisfying $\max_i \abs{c_i}\le 1$ we have 
\bels{FA bound}{
\absbb{\frac{1}{N}\sum_{i=1}^N c_i\2Q_i \frac{1}{G_{ii}}m_i}\,\prec\, \max\cbb{\frac{1}{\sqrt{N}},\Phi}\Phi\,.
}
\end{pro}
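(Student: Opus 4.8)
The plan is to follow the standard fluctuation--averaging argument (introduced for generalized Wigner matrices in \cite{EYYBern}, see also \cite{EJP2473,Ajankirandommatrix}), adapted to the non-commutative $K\times K$ block structure of the resolvent entries $G_{ij}\in\C^{K\times K}$. Throughout write $\E_i[\genarg]\defeq\E[\genarg\mid\Hf^{\{i\}}]$ and $Q_i\defeq\Id-\E_i$, so that $\E_iQ_i=0$ and $\E_i,\E_j$ commute for $i\neq j$. The point is that the naive bound on the left side of \eqref{FA bound} is only $\max_i\abs{Q_i(1/G_{ii})m_i}\prec\Phi$, and averaging must produce the extra factor $\max\{N^{-1/2},\Phi\}$. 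By Markov's inequality and the definition of $\prec$ it suffices to prove, for every fixed $p\in\N$ and $\eps>0$,
\[
\E\absbb{\frac1N\sum_{i=1}^N c_i\, Q_i\frac1{G_{ii}}m_i}^{2p}\,\le\, N^\eps\pb{\max\{N^{-1/2},\Phi\}\Phi}^{2p}+N^{-2pD}
\]
with $D$ chosen large; the last term absorbs the exceptional event on which \eqref{FA assumption} or the trivial bound $\normtwo{\Gf}\le(\Im z)^{-1}$ fails, where a crude deterministic estimate is used. Fixing a matrix entry $(a,b)$ and using that the $c_i$ and $m_i$ are deterministic, $\big(c_iQ_i\frac1{G_{ii}}m_i\big)_{ab}=Q_iW_i$, where $W_i$ is a fixed linear functional of the entries of $\frac1{G_{ii}}m_i-\id$; by \eqref{FA assumption} and the a priori control on $\mf$ one has $W_i\prec\Phi$, and we may assume the bounds $\abs{G_{ij}}\prec\Phi$ for $i\neq j$ and $\abs{G_{ii}}+\abs{1/G_{ii}}\prec1$ hold deterministically up to the negligible exceptional set. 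Thus it remains to bound $\E\absb{N^{-1}\sum_iQ_iW_i}^{2p}$.

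Next I would expand the moment and classify by coincidence pattern:
\[
\E\absbb{\frac1N\sum_iQ_iW_i}^{2p}=\frac1{N^{2p}}\sum_{i_1,\dots,i_{2p}}\E\prod_{a=1}^{2p}\pb{Q_{i_a}W_{i_a}}^{\sharp_a},
\]
where $\sharp_a$ denotes complex conjugation for $a>p$. Group the tuples by the partition $P$ of $\{1,\dots,2p\}$ identifying equal indices; a partition with $b=b(P)$ blocks is realised by at most $N^{b}$ tuples, and if $s=s(P)$ denotes the number of singleton blocks, then, since the remaining $b-s$ blocks each have at least two elements, $2b-s\le2p$, i.e.\ $b\le p+\tfrac s2$. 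The crux is the estimate, valid for every tuple with partition $P$,
\[
\absbb{\E\prod_{a=1}^{2p}\pb{Q_{i_a}W_{i_a}}^{\sharp_a}}\,\prec\,\Phi^{2p+s(P)}.
\]

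This is the usual minor-expansion argument. For each singleton index $i_a$ one expands every other factor $Q_{i_{a'}}W_{i_{a'}}$ ($a'\neq a$) in the minor $\Gf^{\{i_a\}}$ via the resolvent identities \eqref{eq:resolvent_identity} and \eqref{eq:one_removed}, and likewise splits $Q_{i_a}W_{i_a}$ itself into a part independent of the rows indexed by the other blocks — which is $\E_{i_a}$-centred — plus corrections, each carrying an off-diagonal factor $G_{\genarg\,i_a}\prec\Phi$; since only the finitely many occurring indices need to be removed, these expansions terminate. In the resulting finite sum of products, any term in which factor $a$ is in its independent part and all the other factors are in their parts independent of row $i_a$ vanishes under $\E_{i_a}$ because $\E_{i_a}Q_{i_a}=0$. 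Hence for a term to survive, at least one extra off-diagonal factor $\Phi$ must be spent for each singleton index, producing the extra $\Phi^{s(P)}$ beyond the naive $\Phi^{2p}$. The $K\times K$ block structure affects only the ordering bookkeeping in these identities; the probabilistic inputs are the entrywise versions of the large deviation bounds (C.2)--(C.4) of \cite{EJP2473} already used above.

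Finally, power counting closes the estimate: the contribution of a partition $P$ is at most
\[
N^{\,b-2p}\Phi^{2p+s}\,\le\,N^{\,s/2-p}\Phi^{2p+s}\,=\,\Phi^{2p}\,N^{-p}\pb{N^{1/2}\Phi}^{s}.
\]
If $\Phi\le N^{-1/2}$ then $(N^{1/2}\Phi)^{s}\le1$, so this is at most $N^{-p}\Phi^{2p}$; if $\Phi>N^{-1/2}$ then $s\le2p$ gives $(N^{1/2}\Phi)^{s}\le(N^{1/2}\Phi)^{2p}$, so it is at most $\Phi^{4p}$. In both cases it is bounded by $\pb{\max\{N^{-1/2},\Phi\}\Phi}^{2p}$, and summing over the finitely many partitions yields the moment bound, hence \eqref{FA bound}. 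I expect the main obstacle to be making the minor-expansion argument for the crux estimate $\absb{\E\prod_a(Q_{i_a}W_{i_a})^{\sharp_a}}\prec\Phi^{2p+s}$ fully rigorous — organising the branching of the expansion so that exactly one extra factor $\Phi$ is charged to each singleton, while handling the non-commutativity of the $K\times K$ entries — since the reduction to moments and the power counting are routine.
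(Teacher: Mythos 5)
Your proposal is correct and follows essentially the same route as the paper: a high-moment expansion, classification of index tuples by the number of lone (singleton) labels, the key estimate $\prec\Phi^{2p+s}$ obtained by expanding every factor in each lone index via the resolvent identities so that the fully decoupled term is annihilated by $\E_i Q_i=0$, and the identical power counting $N^{b-2p}\Phi^{2p+s}\le(\max\{N^{-1/2},\Phi\}\Phi)^{2p}$. The paper's ``$l$-factor'' bookkeeping and induction over lone labels is exactly the rigorous organisation of the branching expansion that you flag as the main remaining obstacle, and it handles the non-commutativity by inserting $m_{t_k}^{-1}m_{t_k}$ so that only the normalised quantities controlled by \eqref{FA assumption} ever appear.
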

Note that the assumption \eqref{eq:assumption_averaged_local_law} directly implies \eqref{FA assumption}. Moreover, \eqref{FA assumption} yields 
\[
\absbb{\Big(Q_i \frac{1}{G_{ii}} \Big)\left( G_{ii}-m_i\right)}\,\leq \,  
\absbb{Q_i\Big( \frac{1}{G_{ii}} m_i-\id\Big)}\norm{\mf^{-1}}\Lambda\,\prec\, \Phi^2.
\]
Thus, we obtain from \eqref{eq:loc_law_average_aux2} and \eqref{eq:loc_law_average_aux3} 
the relation
\begin{equation} \label{eq:loc_law_average_aux4}
\abs{\cf\cdot(\gf-\mf)} \prec \norm{(\Lf^{-1})^*} \norm{\mf} \left(\frac{1}{N} \absB{\sum_{i=1}^N \wt{c}_i Q_i\frac{1}{G_{ii}} m_i} + \Phi^2 + 
\max_{i=1}^N (\abs{d_i^{(7)}} + \abs{d_i^{(8)}})\abs{G_{ii}} + \norm{\Sf} \Lambda^2 \right), 
\end{equation}
where $\wt{\cf}=(\wt{c}_1, \ldots, \wt{c}_N) \in(\C^{K\times K})^N$ is a multiple of $\mf^*(\Lf^{-1})^*[\cf]$ and $\norm{\wt{\cf}} \leq 1$.
From this estimate, we now conclude \eqref{eq:local_law_average_general}. 
Since \eqref{FA assumption} is satisfied by \eqref{eq:assumption_averaged_local_law} 
the bound \eqref{FA bound} implies that the first term on the right-hand side of \eqref{eq:loc_law_average_aux4} 
is controlled by the right-hand side of \eqref{eq:local_law_average_general}. For the third term, we use 
\eqref{eq:initial_error_estimates_bound_G2} and $\abs{G_{ii}} \leq \norm{\mf} + \Phi/\norm{\mf^{-1}}$ 
as well as $\Phi \leq 1 \leq \norm{\mf}\norm{\mf^{-1}}$. 
Hence, \eqref{eq:norm_Sf} concludes the proof of~\eqref{eq:local_law_average_general} and Lemma \ref{lem:avg_tech_local_law}.
\end{proof}

\subsection{No eigenvalues away from self-consistent spectrum} \label{subsec:local_law}

We now state and prove our result for Hermitian Kronecker matrices $\Hf$, Theorem~\ref{thm:no_eigenvalues} below.
The theorem has two parts. 
For simplicity, we state the first part under the condition that $\Af=\sum_i a_i \otimes E_{ii}$ is bounded.  
We relax this condition in the second part 
 for the purpose of our main result, Theorem~\ref{thm:no_eigenvalues_non_hermitian}. 
In this application, $\Af =\Af^\zeta=\sum_i a_i^{\zeta} \otimes E_{ii}$, where $a_i^\zeta$ are given in \eqref{eq:mapping_non_hermitian_to_hermitian2},
and we need to deal with unbounded $\zeta$ as well.

We recall that $\mf=(m_1, \ldots, m_N)$ is the unique solution of \eqref{eq:Dyson} with positive imaginary part.
Moreover, the function $\rho\colon \Hb \to \R_+$ was defined in \eqref{eq:def_rho}, the set $\supmeas$ 
in Definition \ref{def:self-consistent_density_of_states} and $\disvz \defeq \dist(z,\supmeas)$.  
We denote $E \defeq \Re z$ and $\eta \defeq \Im z$.
For a matrix $\Bf$, we write $\smin(\Bf)$ to denote its smallest singular value.

\begin{thm}[No eigenvalues away from $\supmeas$] \label{thm:no_eigenvalues}
Fix $K\in \N$. 
Let $\Af=\sum_{i=1}^N a_i \otimes E_{ii}$ be a Hermitian matrix 
and $\Hf$ be a Hermitian Kronecker random matrix as in \eqref{eq:def_Hf} 
such that \eqref{eq:upper_bound_variances},  \eqref{eq:moments_bounds} and \eqref{eq:bound_coefficients} are satisfied.
\begin{enumerate}[(i)]
\item Assume that $\Af$ is bounded, i.e.,  $\normtwo{\Af}\le\kappa_4$. Then
 there is a universal constant $\delta >0$ such that
 for each $D>0$, there is a constant $C_{D} >0$ such that 
\begin{equation} \label{eq:no_eigenvalues_hermitian}
 \P \Big( \spec(\Hf)\subset \{ \tau \in\R \colon \dist(\tau, \supmeas) \leq N^{-\delta} \} \Big) \geq 1 - \frac{C_{D}}{N^D}. 
\end{equation}
\item Assume now only the weaker bound 
\begin{equation}
\normtwo{\Af} = \max_{i=1}^N \abs{a_i} \leq N^{\kappa_7} \label{eq:A_two_bounded} 
\end{equation}
Let $\Hbouttwo$ be defined through
\begin{equation}\label{out}
\Hbouttwo   \defeq  \bigg\{ w \in\Hb\colon \dist(w,\spec\Af) \geq 2 \norm{\Sf}^{1/2} +1,~ \frac{\norm{\Af-w\id}_2}{\smin(\Af-w\id)} \leq \kappa_9 \bigg\}.
\end{equation}
Then for each $D >0$, there is a constant $C_D >0$ such that 
\begin{equation} \label{eq:no_eigenvalues_hermitian1}
 \P \Big( \spec(\Hf)\cap \Hbouttwo =\emptyset \Big) \geq 1 - \frac{C_{D}}{N^D}. 
\end{equation}
\end{enumerate}
The constants $C_{D}$ in \eqref{eq:no_eigenvalues_hermitian} and \eqref{eq:no_eigenvalues_hermitian1} 
only depend on $K$, $\kappa_1$, $(\varphi_p)_{p\geq 3}$, $\alpha_*$, $\kappa_4$, $\kappa_7$  and $\kappa_9$
in addition to $D$.
\end{thm}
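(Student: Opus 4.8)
The plan is to reduce the theorem to an \emph{entrywise local law} for the resolvent $\Gf$ of $\Hf$ --- a bound $\Lambda(E+\ii\eta)\prec\varphi(\eta)$, with $\Lambda$ as in \eqref{eq:def_Lambda_o_Lambda_HS} and a small deterministic control parameter $\varphi$, uniform in the relevant energy region and valid down to scales $\eta\sim N^{-1+c}$ --- and then to turn such a law into the absence of eigenvalues via the counting inequality: the number of eigenvalues of $\Hf$ in $[E-\eta,E+\eta]$ is $\lesssim N\eta\,\im\avg{\Gf(E+\ii\eta)}$ and $\im\avg{\Gf}=\pi\rho(E+\ii\eta)+\im\avg{\Gf-\Mf}$. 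The local law is produced by a continuity argument in $\eta$: Lemma~\ref{lem:Gap_Lemma}~(i) gives $\Lambda\prec\eta^{-2}$, hence $\Lambda<\vartheta$ (with $\vartheta$ from \eqref{def of vartheta}) at $\eta_{\max}:=\max\{1,\abs E,\normtwo{\Af}\}$; Lemma~\ref{lem:Gap_Lemma}~(ii)--(iii) supply the self-improving estimate $\Lambda\,\chi(\Lambda\le\vartheta)\prec_\delta\varphi$ whenever $\psi(\eta)\le N^{-\delta}$; and since $\eta\mapsto\Lambda(E+\ii\eta)$ is continuous, once $\varphi$ is polynomially smaller than $\vartheta$ on the interval in question, the set $\{\eta\in[\eta_0,\eta_{\max}]:\Lambda(E+\ii\eta)\le\vartheta\}$ is nonempty, relatively closed and relatively open, hence equals $[\eta_0,\eta_{\max}]$, so $\Lambda\prec\varphi$ propagates down to $\eta_0\sim N^{-1+c}$.

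To run this I need $\norm{\mf}$, $\norm{\mf^{-1}}$ and $\norm{\Lf^{-1}}$ (equivalently $\normtwo{\Mf}$, $\normtwo{\Mf^{-1}}$, $\norm{\lopb^{-1}}$, by \eqref{eq:comparing_norms_of_Lf_and_lopb}) under control on the energy region, together with the compactness of $\supmeas$, which follows from Proposition~\ref{pro:exist_unique_M}~(ii) and also provides an a priori interval of length $\lesssim 1+\normtwo{\Af}$ containing $\spec\Hf$ with very high probability (from a crude bound $\normtwo{\Hf}\le\normtwo{\Af}+\normtwo{\Hf-\Af}$ with $\normtwo{\Hf-\Af}\prec1$). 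The controls come from $\normtwo{\Mf}\le\disvz^{-1}$ in \eqref{eq:Mf_upper_bound_eta}, the estimate \eqref{eq:bound_Mf_inverse} for $\normtwo{\Mf^{-1}}$, and Lemma~\ref{lem:Lf_invertible}; with these the deterministic parameters $\psi$ and $\varphi$ of Lemma~\ref{lem:Gap_Lemma}~(iii) reduce to elementary powers of $(N\eta)^{-1}$, $N^{-1/2}$ and $\disvz^{-1}$, where the $\sqrt{\rho/(N\eta)}$ contribution is handled using $\rho(z)\le\eta/(\pi\disvz[2])$ from \eqref{eq:bound_rho_dist_support}.

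For part~(i), $\Af$ is bounded and the region is $\{E:\dist(E,\supmeas)>N^{-\delta}\}$ intersected with the a priori interval; there $\disvz\ge N^{-\delta}$ and $\abs z\lesssim1$, so $\norm{\mf}$, $\norm{\mf^{-1}}$, $\norm{\Lf^{-1}}$ and $\vartheta^{-1}$ are all $\lesssim N^{C_0\delta}$ for a universal exponent $C_0$. Taking $\delta$ small and $\eta_0:=N^{-1+c\delta}$ with $c=c(C_0)$ (both universal) makes $\psi(\eta)\le N^{-\delta}$ and $\varphi(\eta)$ polynomially smaller than $\vartheta$ on $[\eta_0,\eta_{\max}]$, so the bootstrap yields $\Lambda(E+\ii\eta_0)\prec\varphi(\eta_0)$. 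For the counting step one needs a bound on $\im\avg{\Gf-\Mf}$ sharper than the pointwise $\Lambda$: this is precisely what the averaged local law, Lemma~\ref{lem:avg_tech_local_law} (with $\Phi=\Lambda\norm{\mf^{-1}}$), gives thanks to the fluctuation averaging Proposition~\ref{pro:fluctuation_averaging}; combined with $\pi\rho(E+\ii\eta_0)\le\eta_0/\disvz[2]$ it yields $\im\avg{\Gf(E+\ii\eta_0)}\ll(N\eta_0)^{-1}$ on the good event, hence no eigenvalue in $[E-\eta_0,E+\eta_0]$. A union bound over a polynomially fine grid of energies, together with the (trivial) $\eta_0^{-2}$-Lipschitz continuity of $\Gf(\genarg+\ii\eta_0)$ and of $\disvz$, then excludes eigenvalues at every $E$ with $\dist(E,\supmeas)>2N^{-\delta}$, which after renaming $\delta$ is \eqref{eq:no_eigenvalues_hermitian}.

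For part~(ii) --- and here lies the main obstacle --- $\normtwo{\Af}$, and hence $\norm{\mf^{-1}}=\normtwo{\Mf^{-1}}$, may be as large as a power of $N^{\kappa_7}$, so $\vartheta\sim\norm{\mf^{-1}}^{-1}$ degrades and the naive continuity argument in $\Lambda$ fails to close. The restriction to $\Hbouttwo$ is tailored to this: there $\dist(w,\spec\Af)\ge2\norm{\Sf}^{1/2}+1$ combined with \eqref{eq:matrix support_V} gives $\disvz\ge1$, whence $\norm{\mf}\le1$ and (enlarging the constant in \eqref{out} if necessary so that $\normtwo{\Mf}^2\norm{\sopb}<1$) $\norm{\Lf^{-1}}\lesssim1$ via the Neumann series for $\lopb=\Id-\mC_{\Mf}\sopb$; moreover the condition-number constraint $\normtwo{\Af-w\id}/\smin(\Af-w\id)\le\kappa_9$ together with $\Mf^{-1}=(\Af-z\id)(\id+\Mf\sopb[\Mf])^{-1}$ yields $\norm{\mf}\norm{\mf^{-1}}\lesssim\kappa_9$ and $\rho(z)\lesssim\eta$. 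Hence, although $\norm{\mf^{-1}}$ by itself is large, the combinations $\psi\lesssim_{\kappa_9}(N\eta)^{-1}$ and $\varphi\,\norm{\mf^{-1}}\lesssim_{\kappa_9}N^{-1/2}+(N\eta)^{-1}$ stay small uniformly in $\kappa_7$, the $\norm{\mf^{-1}}$-factors cancelling against $\norm{\mf}$-factors; so the continuity argument, now run for the \emph{relative} error $\Lambda\,\norm{\mf^{-1}}$ --- whose threshold $\tfrac14$ coincides with the $\chi$-constraint in Lemma~\ref{lem:Gap_Lemma} --- closes down to $\eta_0=N^{-1+c}$ for arbitrary $\kappa_7$. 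The counting step and the grid/union-bound argument, restricted to $\abs E\lesssim N^{\kappa_7}$ (outside of which $\Hf$ has no eigenvalue with very high probability), then carry over verbatim from part~(i) and give \eqref{eq:no_eigenvalues_hermitian1}.
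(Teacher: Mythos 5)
Your proposal is correct and follows essentially the same route as the paper: an averaged local law obtained by a stochastic continuity argument in $\eta$ from Lemma~\ref{lem:Gap_Lemma} plus the fluctuation averaging of Lemma~\ref{lem:avg_tech_local_law}, converted into absence of eigenvalues by the counting inequality of Lemma~\ref{lem:initial_estimate_no_eigenvalues_outside} and a grid/union bound, with the condition-number constraint in $\Hbouttwo$ supplying $\norm{\mf}\norm{\mf^{-1}}\lesssim\kappa_9$ exactly as in \eqref{new product}. The only cosmetic deviations are that the paper confines the spectrum a priori via the trivial bound $\Tr(\Hf^2)\prec(1+\normtwo{\Af}^2)N$ rather than the stronger (and not free) claim $\normtwo{\Hf-\Af}\prec1$, and bounds $\norm{\Lf^{-1}}$ on $\Hbouttwo$ through \eqref{eq:Lf_bounded2} rather than a Neumann series; neither difference affects the argument.
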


We will prove Theorem~\ref{thm:no_eigenvalues} as a consequence of the following Lemma~\ref{lem:local_law_aux_bound}.
This lemma is a type of local law. 
Its general comprehensive version, Lemma~\ref{lem:local_law} below,
is a standard application of Lemma~\ref{lem:Gap_Lemma}, Lemma~\ref{lem:avg_tech_local_law} and 
Proposition~\ref{pro:fluctuation_averaging}.
For the convenience of the reader, we will give 
an outline of the proof in Appendix \ref{app:proof_local_law}.

We also consider $\kappa_7, \kappa_8, \kappa_9$ 
from \eqref{eq:A_two_bounded} and \eqref{eq:def_Hbout} below, respectively, as model parameters.

\begin{lem} \label{lem:local_law_aux_bound}
Fix $K\in \N$. Let $\kappa_7>0$ and 
$\Af=\sum_{i=1}^N a_i \otimes E_{ii}$ be a Hermitian matrix such that \eqref{eq:A_two_bounded} holds true.  
Let $\Hf$ be a Hermitian Kronecker random matrix as in \eqref{eq:def_Hf} 
such that \eqref{eq:upper_bound_variances},  \eqref{eq:moments_bounds} and \eqref{eq:bound_coefficients} are satisfied.
We define 
\begin{subequations} \label{eq:def_Hbout}
\begin{align} 
\Hboutone & \, \defeq \,\Big \{w \in\Hb \colon \dist(w, \spec \Af) \leq 2 \norm{\Sf}^{1/2} +1,~ \norm{\Af}_2 \leq \kappa_8 \Big\},
\label{eq:def_Hboutone}\\
\Hbouttwo & \, \defeq \, \bigg\{ w \in\Hb\colon \dist(w,\spec\Af) \geq 2 \norm{\Sf}^{1/2} +1,~ \frac{\norm{\Af-w\id}_2}{\smin(\Af-w\id)} \leq \kappa_9 \bigg\}.
\label{eq:def_Hbouttwo} 
\end{align}
\end{subequations}
Then there are $p\in\N$ and $P\in\N$ independent of $N$ and the model parameters such that 
\begin{equation}  \label{eq:average_local_law_aux2}
 \absB{\frac{1}{N}\sum_{i=1}^N \Tr\Im (G_{ii}(z) - m_i(z))} \prec \max\Big\{ 1, \frac{1}{\disvz[P]} \Big\} \Big(\frac{1}{N } + \frac{1}{(N\eta)^2} \Big) 
\end{equation}
for any $z=E+\ii\eta\in\Hboutone\cup\Hbouttwo$ such that $\abs{E} \leq N^{\kappa_7+1}$ and $\eta\geq N^{-1+\gamma}(1+\disvz[-p])$. 
\end{lem}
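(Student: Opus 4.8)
The plan is to deduce \eqref{eq:average_local_law_aux2} from the perturbed Dyson equation \eqref{eq:dyson_perturbed} in three stages: (a) establish polynomial a priori control, in powers of $\disvz[-1]$, of the solution of the Dyson equation on $\Hboutone\cup\Hbouttwo$; (b) run the standard continuity argument in $\eta$, built on Lemma~\ref{lem:Gap_Lemma}, to obtain a preliminary entrywise local law $\Lambda\prec\disvz[-P_2](N^{-1/2}+(N\eta)^{-1})$; and (c) upgrade the averaged quantity via the fluctuation-averaging input, Lemma~\ref{lem:avg_tech_local_law} (which itself rests on Proposition~\ref{pro:fluctuation_averaging}). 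This is the mechanism behind the general local law mentioned as Lemma~\ref{lem:local_law}; here one only specializes it to the two regions above.

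For stage (a) I would show there is an absolute constant $P_1$ with $\norm{\mf}\lesssim\disvz[-1]$, $\norm{\Im\mf}+\rho(z)\lesssim\eta\disvz[-2]$, and $\norm{\mf^{-1}}+\norm{\Lf^{-1}}+\norm{(\Lf^{-1})^{*}}\lesssim\max\{1,\disvz[-P_1]\}$, uniformly for $z=E+\ii\eta\in\Hboutone\cup\Hbouttwo$ with $\abs{E}\le N^{\kappa_7+1}$. The first two follow from Lemma~\ref{lem:bounds_Mf}. For $\norm{\mf^{-1}}$ one uses \eqref{eq:Dyson} as $m_j^{-1}=a_j-z\id-\Sf_j[\mf]$, so that $\norm{\mf^{-1}}\le\normtwo{\Af-z\id}+\norm{\Sf}\norm{\mf}$ with $\norm{\Sf}\lesssim1$ (Lemma~\ref{lem:norm_Sf}), together with the geometry: in $\Hboutone$ one has $\abs{z}\lesssim1$ (from $\normtwo{\Af}\le\kappa_8$ and $\dist(z,\spec\Af)\lesssim1$) and $\disvz\lesssim1$; in $\Hbouttwo$ the defining bound $\normtwo{\Af-z\id}\le\kappa_9\smin(\Af-z\id)=\kappa_9\dist(z,\spec\Af)$ and the support inclusions \eqref{eq:support_V}--\eqref{eq:spec_A_in_supp_rho} (which give $\disvz\sim\dist(z,\spec\Af)\ge1$) yield $\norm{\mf^{-1}}\lesssim\disvz$, hence $\norm{\mf}\norm{\mf^{-1}}\lesssim1$ there. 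Feeding these into Lemma~\ref{lem:Lf_invertible} and using \eqref{eq:comparing_norms_of_Lf_and_lopb} to pass between $\lopb$ and $\Lf$ produces the stated bound on $\norm{\Lf^{-1}}$ and $\norm{(\Lf^{-1})^{*}}$.

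For stage (b), for each fixed $E$ with $\abs{E}\le N^{\kappa_7+1}$ I would run the bootstrap in $\eta$: start at a large $\eta$ where $z\in\Hbouttwo$ and $\Lambda\prec\eta^{-2}$ by Lemma~\ref{lem:Gap_Lemma}\,(i), and descend along a fine grid — using $\abs{\partial_\eta\Gf}\le\eta^{-2}$ and Corollary~\ref{coro:Mf_lipschitz} for $\Mf$, plus a union bound — removing at each step the indicator $\chi(\Lambda\le\vartheta)$ in Lemma~\ref{lem:Gap_Lemma}\,(iii) by invoking that $\Lambda\le\vartheta$ already held one grid-step up. Inserting the bounds from (a) and $\rho\lesssim\eta\disvz[-2]$ into $\varphi$ gives $\varphi\lesssim\disvz[-P_2](N^{-1/2}+(N\eta)^{-1})$ for an absolute $P_2$; choosing $p$ in the hypothesis $\eta\ge N^{-1+\gamma}(1+\disvz[-p])$ large enough (in terms of $P_1,P_2$) so that $\psi(\eta)\le N^{-\delta}$ and $\varphi<\vartheta$ makes the descent close, giving $\Lambda\prec\disvz[-P_2](N^{-1/2}+(N\eta)^{-1})$ on the whole admissible range. (If $\normtwo{\Af}\le\kappa_8$ the two regions may be separated in $\eta$ by a bounded gap in which all quantities are nonetheless $O(1)$, so the descent passes through it with $O(1)$ constants; if $\normtwo{\Af}>\kappa_8$ then $\Hboutone=\emptyset$ and the $\eta$-section of $\Hbouttwo$ is a single ray, so there is no connectivity issue.)

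For stage (c) I would apply Lemma~\ref{lem:avg_tech_local_law} with $c_i=\id$ and $\Phi\defeq N^{\eps}\disvz[-P_2]\norm{\mf^{-1}}(N^{-1/2}+(N\eta)^{-1})$, which satisfies $\Lambda\prec\Phi/\norm{\mf^{-1}}$ by (b) and $\Phi\le N^{-\eps'}$ by the lower bound on $\eta$ and $\norm{\mf^{-1}}\le N^{\kappa_7+1}$ once $p$ is large. Since $\Tr\Im(G_{ii}-m_i)=\Im\Tr(G_{ii}-m_i)$ and $\abs{\frac{1}{N}\sum_i\Tr(G_{ii}-m_i)}\le K\abs{\frac{1}{N}\sum_i(G_{ii}-m_i)}$, the right-hand side of \eqref{eq:local_law_average_general} bounds the left-hand side of \eqref{eq:average_local_law_aux2}: its leading term $\norm{(\Lf^{-1})^{*}}\norm{\mf}\Phi^2\norm{\mf^{-1}}^{-2}$ is $\lesssim\disvz[-P](N^{-1}+(N\eta)^{-2})$, and the remaining ones ($\max\{N^{-1/2},\Phi\}\Phi$, $\norm{\mf}^2N^{-1}$, and $\Lambda_{\rm{w}}^2\norm{\mf}\norm{\mf^{-1}}$, the last controlled through the Ward identity \eqref{eq:ward_identity} and steps (a)--(b)) are of the same form, giving \eqref{eq:average_local_law_aux2} for a sufficiently large absolute $P$. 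The main obstacle is the bookkeeping in stage (b): tracking exactly how each power of $\disvz[-1]$ from the a priori bounds propagates through the cutoff $\vartheta$ and the self-improving error $\varphi$, and choosing $p$ so that $\psi\le N^{-\delta}$ and $\varphi<\vartheta$ hold simultaneously and uniformly over the two geometrically different regimes — $\Hboutone$, where $\disvz$ may be a small negative power of $N$ (but then $\eta\ge\disvz\ge N^{-(1-\gamma)/(p+1)}$ is forced, which is what makes the iteration close), and $\Hbouttwo$, where $\disvz\ge1$ but $\norm{\mf^{-1}}$ may be as large as $N^{\kappa_7+1}$, so it is $\norm{\mf}\norm{\mf^{-1}}$, not $\norm{\mf^{-1}}$, that stays bounded.
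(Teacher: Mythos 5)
Your proposal is correct and follows essentially the same route as the paper: the paper's proof of this lemma consists precisely of re-running the continuity-plus-fluctuation-averaging argument of the stable regime with the three inputs $\norm{\mf}\lesssim \disvz[-1]$, $\norm{\mf}\norm{\mf^{-1}}\lesssim 1+\disvz[-2]$ and $\norm{\Lf^{-1}}+\norm{(\Lf^{-1})^{*}}\lesssim 1+\disvz[-26]$, the latter two derived exactly as in your stage (a) from the geometry of $\Hboutone$ and $\Hbouttwo$ (the support comparison \eqref{eq:support_V}--\eqref{eq:spec_A_in_supp_rho} and the condition-number bound in \eqref{eq:def_Hbouttwo}) together with Lemma~\ref{lem:Lf_invertible} and \eqref{eq:comparing_norms_of_Lf_and_lopb}, followed by the substitution $\norm{\Im\mf}\leq \eta\,\disvz[-2]$ to produce the $N^{-1}+(N\eta)^{-2}$ right-hand side. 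The only blemishes are self-corrected within your own text: the headline claim $\norm{\mf^{-1}}\lesssim\max\{1,\disvz[-P_1]\}$ fails on $\Hbouttwo$ (there $\norm{\mf^{-1}}$ can be of order $\disvz$, and only the product $\norm{\mf}\norm{\mf^{-1}}$ is bounded, as you note at the end), and the chain should read $\disvz\geq\eta$ rather than $\eta\geq\disvz$.
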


 We remark that since $\Af$ is Hermitian, if $\| \Af\|_2$ is bounded, then the second condition in \eqref{eq:def_Hbouttwo} is
automatically satisfied (perhaps with a larger $\kappa_9$), given the first one. So for $\| \Af\|_2\le\kappa_8$, alternatively, we could
have  defined the sets
\begin{equation}\label{alt}
\begin{aligned} 
\Hboutone   & \defeq \Big \{w \in\Hb \colon \dist(w, \spec \Af) \leq 2 \norm{\Sf}^{1/2} +1 \Big\},\\
\Hbouttwo   & \defeq  \bigg\{ w \in\Hb\colon \dist(w,\spec\Af) \geq 2 \norm{\Sf}^{1/2} +1 \bigg\}.
\end{aligned} 
\end{equation}
If $\| \Af\|_2$ does not have an $N$-independent bound, then we could have  defined
$\Hboutone   \defeq \emptyset$ and $\Hbouttwo$ as in \eqref{out}. 
The estimate \eqref{eq:average_local_law_aux2} holds as stated with these alternative definitions of $\Hboutone$ and 
$\Hbouttwo$. 

\begin{defi}(Overwhelming probability) We say that an event $A^{(N)}$ happens \emph{asymptotically with overwhelming probability}, a.w.o.p.,  
if for each $D>0$ there is $C_D>0$ such that for all $N \in \N$, we have
\[ \P \big(A^{(N)} \big) \geq 1- \frac{C_D}{N^D}. \]
\end{defi}

\begin{proof}[Proof of Theorem~\ref{thm:no_eigenvalues}]  
From \eqref{eq:trivial_control_x_ij_prec}, we conclude the crude bound 
\begin{equation} \label{eq:no_eigenvalues_far_outside}
 \max_{\lambda \in \spec \Hf} \abs{\lambda}^2 \leq \Tr(\Hf^2) = \sum_{i,j=1}^N \abs{h_{ij}}^2 \prec (1 +\normtwo{\Af}^2) N.  
\end{equation}
Therefore, there are a.w.o.p. no eigenvalues of $\Hf$ outside of $[-a,a]$ with $a \defeq (1+\normtwo{\Af})\sqrt{N}$. 

We introduce the set $A_\delta \defeq \{ \omega \in\R\colon \dist(\omega, \supmeas) \geq N^{-\delta}\}$ for $\delta>0$. 
The previous argument proves that there are no eigenvalues in $A_\delta\setminus[-a,a]$ for any $\delta >0$. For the opposite regime, 
i.e. to show  that $A_\delta \cap [-a,a]$ does not contain any eigenvalue of $\Hf$ a.w.o.p.
with some small $\delta>0$, we use the following standard lemma and will include a proof 
for the reader's convenience at the end of this section.

\begin{lem} \label{lem:initial_estimate_no_eigenvalues_outside}
Let $\Hf$ be an arbitrary Hermitian random matrix and $\Gf(z) \defeq (\Hf-z\id)^{-1}$ its resolvent at $z\in\Hb$.
Let $\Phi \colon \Hb \to \R_+$ be a deterministic (possibly $N$-dependent) control parameter such that 
\begin{equation} \label{eq:no_eigenvalues_outside_condition}
 {\frac{1}{N} \Im \Tr \Gf(\tau +\ii \eta_0)} \prec \Phi(\tau +\ii \eta_0) 
\end{equation}
for some   $\tau\in \R$ and $\eta_0  > 0$.   
\begin{enumerate}[(i)]
\item If $ (N\eta_0)^{-1} \geq { N^\eps}\Phi(\tau+\ii\eta_0)$ for some $\eps>0$  then 
$\spec(\Hf) \cap [\tau-\eta_0, \tau + \eta_0] = \varnothing$ a.w.o.p. 
\item 
Let $\mathcal{E} \defeq \{ \tau \in [-N^C,N^C] \colon (N\eta_0)^{-1} \geq { N^\eps}\Phi(\tau +\ii\eta_0) \}$ for some $C>0$ and $\eps>0$. Furthermore,
 suppose that   $\eta_0\ge N^{-c}$ for some $c>0$ and \eqref{eq:no_eigenvalues_outside_condition} holds uniformly for all  $\tau\in\mathcal{E}$. 
Then $\spec(\Hf) \cap \mathcal{E} = \varnothing$ a.w.o.p. 
\end{enumerate}
\end{lem}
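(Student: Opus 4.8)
The plan is to use the standard fact that $\Im\Tr\Gf$ at spectral parameter $\tau+\ii\eta_0$ \emph{detects} eigenvalues within distance $\eta_0$ of $\tau$. Writing $\Gf(\tau+\ii\eta_0)$ in its spectral decomposition gives
\[
 \frac1N \Im \Tr \Gf(\tau + \ii\eta_0) \,=\, \frac1N \sum_{\lambda \in \spec(\Hf)} \frac{\eta_0}{(\lambda - \tau)^2 + \eta_0^2} \,\geq\, \frac1N \frac{\eta_0}{(\lambda_0 - \tau)^2 + \eta_0^2}
\]
for any single $\lambda_0 \in \spec(\Hf)$, since all summands are nonnegative. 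If $\abs{\lambda_0 - \tau} \leq \eta_0$, then the right-hand side is at least $(2N\eta_0)^{-1}$.

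For part (i) the argument is then immediate: by \eqref{eq:no_eigenvalues_outside_condition} and the definition of stochastic domination, a.w.o.p.\ we have $\frac1N \Im\Tr\Gf(\tau+\ii\eta_0) \leq N^{\eps/2}\Phi(\tau+\ii\eta_0)$, and the hypothesis $(N\eta_0)^{-1} \geq N^\eps \Phi(\tau+\ii\eta_0)$ bounds the latter by $N^{-\eps/2}(N\eta_0)^{-1}$, which is strictly smaller than $(2N\eta_0)^{-1}$ once $N$ is large (the finitely many small $N$ being absorbed into the constant $C_D$ as usual). Comparing with the lower bound above, a.w.o.p.\ there is no $\lambda_0 \in \spec(\Hf) \cap [\tau-\eta_0,\tau+\eta_0]$.

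For part (ii) the extra input is a union bound over a deterministic net of $\mathcal{E}$. Fix an exponent $Q > c+1$, partition $[-N^C, N^C]$ into $\ord(N^{C+Q})$ intervals of length $N^{-Q}$, and in each such interval that meets $\mathcal{E}$ choose one point of $\mathcal{E}$; this produces a deterministic set $\mathcal{E}_0 \subset \mathcal{E}$ with $\abs{\mathcal{E}_0} = \ord(N^{C+Q})$ that is $N^{-Q}$-dense in $\mathcal{E}$. Because \eqref{eq:no_eigenvalues_outside_condition} holds uniformly for $\tau \in \mathcal{E}$ (so the constant in $\prec$ does not depend on the point), a union bound over the polynomially many points of $\mathcal{E}_0$ shows that a.w.o.p.\ $\frac1N \Im\Tr\Gf(\tau_0 + \ii\eta_0) \leq N^{-\eps/2}(N\eta_0)^{-1}$ simultaneously for every $\tau_0 \in \mathcal{E}_0$. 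Now if there were an eigenvalue $\lambda \in \mathcal{E}$, pick $\tau_0 \in \mathcal{E}_0$ with $\abs{\lambda - \tau_0} \leq N^{-Q} \leq \eta_0$ (using $\eta_0 \geq N^{-c}$ and $Q > c$); the term of the spectral sum corresponding to $\lambda$ then forces $\frac1N \Im\Tr\Gf(\tau_0 + \ii\eta_0) \geq \eta_0\big/\big(N(N^{-2Q}+\eta_0^2)\big) \geq (2N\eta_0)^{-1}$, contradicting the previous bound for $N$ large. Hence $\spec(\Hf) \cap \mathcal{E} = \varnothing$ a.w.o.p.

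There is no substantial obstacle: this is a routine ``the resolvent detects eigenvalues'' estimate. The only subtlety worth flagging is that in part (ii) the eigenvalue $\lambda$ is random, so \eqref{eq:no_eigenvalues_outside_condition} cannot be invoked at the random point $z = \lambda + \ii\eta_0$; the deterministic net $\mathcal{E}_0$ circumvents this, at the mild cost of replacing $(N\eta_0)^{-1}$ by a slightly smaller multiple of it, which is why $N^{-2Q} \leq \eta_0^2$ (equivalently $Q > c$, used via $\eta_0 \geq N^{-c}$) is needed.
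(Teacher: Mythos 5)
Your proof is correct and follows essentially the same route as the paper: the spectral representation of $\frac{1}{N}\Im\Tr\Gf$ detects eigenvalues within $\eta_0$ of $\tau$, which combined with the assumed upper bound gives (i), and (ii) follows by a union bound over a polynomially fine deterministic grid in $\mathcal{E}$ (the paper phrases the grid step via Lipschitz continuity of $\tau\mapsto\frac{1}{N}\max_i\frac{\eta_0}{(\lambda_i-\tau)^2+\eta_0^2}$, whereas you argue directly at the nearest net point to a putative eigenvalue — the same discretization in substance). Your explicit remark that the net is needed because the eigenvalue is a random point is exactly the reason the paper includes part (ii) separately.
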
 
We now finish the proof of Theorem~\ref{thm:no_eigenvalues}. In fact, by \eqref{eq:A_two_bounded} we have $a \lesssim N^{\kappa_7+1/2}$,
thus we work in the regime $|E|\le N^{\kappa_7+1}$.  We choose 
$$
\Phi(z) \defeq \rho(z)+ \max\{1, \disvz[-P]\}\Big(\frac{1}{N} + \frac{1}{(N\im z)^2}\Big) \quad \text {and} \quad \eta_0\defeq N^{-2/3}.
$$
For small enough $\delta$ and $\gamma$, we can assume that $\eta_0 \geq N^{-1+\gamma}(1+ \dist(\tau+\ii\eta_0,\supmeas)^{-p})$ for $\dist(\tau, \supmeas) \geq N^{-\delta}$. 
 Consider first the case when $\| \Af\|_2\le \kappa_4$, then $\Hboutone$ and $\Hbouttwo$ are complements of each other, 
see the remark at~\eqref{alt},
and then
 \eqref{eq:no_eigenvalues_outside_condition} is satisfied by \eqref{eq:average_local_law_aux2} 
for any $\tau$ with $|\tau|\le N^{\kappa_7+1}$. 
Moreover, owing to \eqref{eq:bound_rho_dist_support}, we have
\[ \Phi(E +\ii\eta_0) \lesssim \frac{ N^{2\delta}}{N^{2/3}} + {N^{P\delta}}\Big(\frac{1}{N} + \frac{1}{N^{2/3}}\Big) \]
for all $E \in A_\delta\cap[-a,a]$.
Therefore, by possibly reducing $\delta>0$ and introducing a sufficiently small $\eps>0$, we can assume $ N^\eps \Phi(E +\ii\eta_0) \leq N^{-1/3}=(N\eta_0)^{-1}$. 
Thus, from Lemma \ref{lem:initial_estimate_no_eigenvalues_outside} we infer that $\Hf$ does not have any eigenvalues in $A_\delta\cap [-a,a]$ a.w.o.p. 
Combined with the argument preceding Lemma \ref{lem:initial_estimate_no_eigenvalues_outside}, which excludes a.w.o.p.
eigenvalues of $\Hf$ in $A_\delta\setminus[-a,a]$, this proves~\eqref{eq:no_eigenvalues_hermitian}  if $\| \Af\|_2\le \kappa_4$.
Under the weaker assumption $\| \Af\|_2\le N^{\kappa_7}$ the same argument works but only for $E\in \Hbouttwo$
since \eqref{eq:average_local_law_aux2} was proven only in this regime.
\end{proof}

\begin{proof}[Proof of Lemma \ref{lem:initial_estimate_no_eigenvalues_outside}]
For the proof of part (i), we compute 
\[ \frac{1}{N} \Im \Tr \Gf(\tau +\ii\eta) = \frac{1}{N} \sum_i \frac{\eta}{(\lambda_i - \tau)^2 +\eta^2 }. \]
Estimating the maximum from above by the sum, we obtain from the previous identity and the assumption that
\begin{equation} \label{eq:no_eigenvalues_outside_aux1}
 \frac{1}{N} \max_i \frac{\eta_0}{(\lambda_i - \tau)^2 +\eta_0^2 } \prec \Phi \leq \frac{N^{-\eps}}{N\eta_0}. 
\end{equation}
We conclude that $\min_i \abs{\lambda_i - \tau} \geq \eta_0$ a.w.o.p. 
and hence (i) follows. 

The part (ii) is an immediate consequence of (i) and a union bound argument using the Lipschitz-continuity in $\tau$ on $\mathcal{E}$ of the left-hand side of \eqref{eq:no_eigenvalues_outside_aux1} with
Lipschitz-constant bounded by $N^{3(C+c)}$ and the boundedness of $\mathcal{E}$, i.e., $\mathcal{E} \subset [-N^C, N^C]$. 
\end{proof}

\section{Fluctuation Averaging: Proof of Proposition \ref{pro:fluctuation_averaging}} \label{sec:FA}

In this section, we prove the Fluctuation Averaging which was stated as Proposition \ref{pro:fluctuation_averaging} in the previous section. 

\begin{proof}[Proof of Proposition \ref{pro:fluctuation_averaging}]

We fix an even $ p \in \N$ and use the abbreviation
\[
Z_i\,\defeq\, c_i\2Q_i \frac{1}{G_{ii}}m_i\,.
\]
We will estimate the $p$-th moment of $\frac{1}{N}\sum_i Z_i$. 
For a $p$-tuple $\bs{i}=(i_1, \dots, i_{p}) \in \{1, \dots,N\}^p$ we call a label $i_l$ a \emph{lone label} if it appears only once in $\bs{i}$. We denote by $J_L$ all tuples $\bs{i} \in \{1, \dots,N\}^{p}$ with exactly $L$ lone labels. Then we have
\bels{FA start}{
\E\,\absbb{\frac{1}{N}\sum_{i=1}^N \2Z_i}^p\,\le\,\frac{1}{N^p} \sum_{L=0}^p \sum_{\bs{i} \in J_L} \abs{\E\2 Z_{i_1}\dots Z_{i_{p/2}} \ol{Z_{i_{p/2+1}}\dots Z_{i_p}}}\,.
}
For $\bs{i} \in J_L$ we estimate
\bels{FA estimate}{
\abs{\E\2 Z_{i_1}\dots Z_{i_{p/2}} \ol{Z_{i_{p/2+1}}\dots Z_{i_p}}}\,\prec\, \Phi^{p+L}.
}
Before verifying \eqref{FA estimate} we show this bound is sufficient to finish the proof. Indeed, using $\abs{J_L}\le C(p)N^{(L+p)/2}$ and \eqref{FA estimate} in \eqref{FA start} yields
\[
\E\,\absbb{\frac{1}{N}\sum_{i=1}^N \2Z_i}^p\,\prec\,\sum_{L=0}^p N^{-(p-L)/2}\Phi^{p+L} \,\prec\,\pbb{ \max\cbb{\frac{1}{\sqrt{N}},\Phi}\Phi}^{p}.
\]
This implies \eqref{FA bound}. 

The rest of the proof is dedicated to showing \eqref{FA estimate}. Since the complex conjugates do not play any role in the following arguments, we  omit them in our notation.  Furthermore, by symmetry we may assume that $\{i_1, \dots, i_L\}$ are the lone labels in $\bs{i}$.

We we fix $\ell \in \{0, \dots,L\}$ and $l \in \{1, \dots,p\}$. For any $K \in \N_0$ we call a pair 
\[
(\bs{t},\bs{T})\qquad \text{with} \qquad \bs{t}\,=\,(t_1, \dots, t_{K-1}) \,,\quad \bs{T}\,=\,(T_{0}, T_{01}, T_1, T_{12},\dots,T_{K-1}, T_{K-1K}, T_{K} )\,,
\]
an $l$-\emph{factor} (\emph{at level} $\ell$) if for all $k \in \{1, \dots,K-1\}$ and all $k' \in \{1, \dots,K-2\}$ the entries of the pair satisfy
\bels{l-factor condition}{
&t_{{k}} \in \{i_1, \dots, i_\ell\},\qquad  T_{k}, T_{k' k'+1} \subseteq \{i_1, \dots, i_\ell\}\,,
\\ 
&t_{k'} \ne t_{k'+1}\,,\quad  t_{k} \not \in T_k\,,\quad t_{k'}, t_{k'+1} \not \in T_{k'k'+1}\,,\quad t_1 \neq i_l\,,,\quad t_{K-1} \neq i_l \,,\quad i_l \not \in T_0 \cup T_{K+1}\,.
}
Then we associate to such a pair the expression
\bels{l-factor expression}{
Z_{\bs{t},\bs{T}}\,\defeq\, c_{i_l}\2Q_{i_l}\sbb{ \frac{1}{G_{i_li_l}^{T_0}}G_{i_lt_1}^{T_{01}} \frac{1}{G_{t_1t_1}^{T_1}}G_{t_1t_2}^{T_{12}} \frac{1}{G_{t_2t_2}^{T_2}}\dots \frac{1}{G_{t_{K-1}t_{K-1}}^{T_{K-1}}}G_{t_{K-1}i_l}^{T_{K-1 K}} \frac{1}{G_{i_li_l}^{T_{K}}} }m_{i_l}\,.
}
In particular, for $K=0$ we have
\[
Z_{\emptyset,(T_0)}\,\defeq\, c_{i_l}\2Q_{i_l} \frac{1}{G_{i_li_l}^{T_0}}m_{i_l}\,,\qquad Z_{\emptyset,(\emptyset)}\,\defeq\, Z_{i_l}\,.
\]
We also call
\[
d(\bs{t},\bs{T})\,\defeq\, K\,,
\]
the \emph{degree} of the $l$-factor $(\bs{t},\bs{T})$.

By induction on $\ell$ we now prove the identity
\bels{Expanded formula}{
\E\2 Z_{i_1}\dots Z_{i_p}\,= \sum_{(\ul{\bs{t}},\ul{\bs{T}})\in \cal{I}_\ell}(\pm) \2\E\2 Z_{\bs{t}_1,\bs{T}_1}\dots Z_{\bs{t}_p,\bs{T}_p}\,,
}
where the sign $(\pm)$ indicates that each summand may have a coefficient $+1$ or $-1$ and the sum is over a set $\cal{I}_\ell$ that contains pair of $p$-tuples $\ul{t}=(\bs{t}_1, \dots, \bs{t}_p)$ and $\ul{T}=(\bs{T}_1, \dots, \bs{T}_p)$ such that $(\bs{t}_l,\bs{T}_l)$ for all $l=1, \dots,p$ is an $l$-factor at level $\ell$. Furthermore, for all $\ell\in \{0, \dots,L\}$ the size of $\cal{I}_\ell$ and the maximal degree of the $l$-factors $(\bs{t}_l,\bs{T}_l)$ are bounded by a constant depending only on $p$ and 
\bels{total degree lower bound}{
\sum_{i=1}^p\max\{1,d(\bs{t}_l,\bs{T}_l)\} \,\ge\, p+\ell\,,\qquad (\ul{\bs{t}},\ul{\bs{T}})\in \cal{I}_\ell\,. 
}

The bound \eqref{FA estimate} follows from \eqref{Expanded formula} and \eqref{total degree lower bound} for $\ell=L$ because 
\bels{bound on l-factor}{
\abs{Z_{\bs{t},\bs{T}}}\,\prec\, \Phi^{\max\{1,d(\bs{t},\bs{T})\}},
}
for any $l$-factor $(\bs{t},\bs{T})$. We postpone the proof of \eqref{bound on l-factor} to the very end of the proof of Proposition \ref{pro:fluctuation_averaging}.

The start of the induction for the proof of \eqref{Expanded formula} is trivial since for $\ell=0$  we can chose the set $\cal{I}_\ell$ to contain only one element with $(\bs{t}_l,\bs{T}_l)=(\emptyset, (\emptyset))$ for all $l=1, \dots,p$. For the induction step, suppose that \eqref{Expanded formula} and \eqref{total degree lower bound} have been proven for some $\ell \in \{1, \dots, L-1\}$. Then we expand all $l$-factors $(\bs{t}_l,\bs{T}_l)$ with $l \ne \ell+1$ within each summand on the right hand side of \eqref{Expanded formula} in the lone index $i_{\ell+1}$ by using the formulas 
\begin{subequations}
\label{expansion step}
\begin{align}
\label{off-diagonal expansion step}
 G^T_{ij}\,&=\,G_{ij}^{T\cup\{k\}}+G_{ik}^T\frac{1}{G_{kk}^T}G_{kj}^T\,,\qquad &i,j  \notin  \{k\} \cup T\,,
 \\
 \label{diagonal expansion step}
\frac{1}{G_{ii}^T}\,&=\,  \frac{1}{G^{T\cup\{k\}}_{ii}}-\frac{1}{G_{ii}^T}G_{ik}^T\frac{1}{G_{kk}^T}G_{ki}^T \frac{1}{G^{T \cup\{k\}}_{ii}}\,,\qquad &i  \notin \{k\}\cup T\,,
\end{align}
\end{subequations}
for $k=i_{\ell+1}$. More precisely, for all $l \neq \ell+1$ we use \eqref{expansion step} on each factor on the right hand side of \eqref{l-factor expression} with $(\bs{t},\bs{T})=(\bs{t}_l,\bs{T}_l)$; \eqref{off-diagonal expansion step} for the off-diagonal and \eqref{diagonal expansion step} for the inverse diagonal resolvent entries. Multiplying out the resulting factors, we write $\E\2 Z_{\bs{t}_1,\bs{T}_1}\dots Z_{\bs{t}_p,\bs{T}_p}$ as a sum of 
\[
2^{\sum_{l \neq \ell+1}2d(\bs{t}_l,\bs{T}_l)+1}
\]
summands of the form 
\bels{summand after expanding}{
\E\2 Z_{\wt{\bs{t}}_1,\wt{\bs{T}}_1}\dots Z_{\wt{\bs{t}}_p,\wt{\bs{T}}_p}\,,
}
where for all $l=1, \dots, p$ the pair $(\wt{\bs{t}}_l,\wt{\bs{T}}_l)$ is an $l$-factor at level $\ell+1$. Note that we did not expand the $\ell+1$-factor $Z_{\bs{t}_{\ell+1}, \bs{T}_{\ell+1}}$. In particular, the only nontrivial conditions for $(\wt{\bs{t}}_l,\wt{\bs{T}}_l)$ to be an $l$-factor at level $\ell+1$ (cf. \eqref{l-factor condition}), namely $t_{k} \ne t_{k+1}$, $t_{1} \ne i_{\ell+1}$ and $t_{K-1} \ne i_{\ell+1}$, are satisfied because $i_{\ell+1}$ does not appear as a lower index on the right hand side of \eqref{l-factor expression} when on the left hand side $(\bs{t},\bs{T})=(\bs{t}_l,\bs{T}_l)$. 

 Moreover all but one of the summands \eqref{summand after expanding} satisfy 
\[
\sum_{i=1}^pd(\wt{\bs{t}}_l,\wt{\bs{T}}_l) \,\ge\, p+\ell+1\,,
\]
because the choice of the second summand in both \eqref{off-diagonal expansion step} and \eqref{diagonal expansion step} increases the number of off-diagonal resolvent elements in the $l$-factor that is expanded. 
The only exception is the summand \eqref{summand after expanding} for which in the expansion in all factors always the first summand of \eqref{off-diagonal expansion step} and \eqref{diagonal expansion step} is chosen. However, in this case all $Z_{\wt{\bs{t}}_l,\wt{\bs{T}}_l}$ with $l\ne \ell+1$ are independent of $i_{\ell+1}$ because this lone index has been completely removed from all factors. We conclude that this particular summand vanishes identically. Thus \eqref{total degree lower bound} holds with $\ell$ replaced by $\ell+1$ and the induction step is proven.  

It remains to verify  \eqref{bound on l-factor}. For $d(\bs{t},\bs{T})=0$ we use that
\bels{K=0 estimate on Z}{
 \absbb{Q_{i_l}\frac{1}{G_{i_li_l}}m_{i_l}}\,\le\, \absbb{\frac{1}{G_{i_li_l}}m_{i_l}-\id}\,\prec\, \Phi
\,,\qquad
\absbb{\frac{1}{G_{i_li_l}^{T}}m_{i_l}- \frac{1}{G_{i_li_l}}m_{i_l}} \,\prec\, \Phi^2\,.
}
The first bound in \eqref{K=0 estimate on Z} simply uses the assumption \eqref{FA assumption} while the second bound uses the expansion formulas \eqref{expansion step} and \eqref{FA assumption}. For $K=d(\bs{t},\bs{T})>0$ we realize that $K$ encodes the number of off-diagonal resolvent entries $G^T_{ij}$ in \eqref{l-factor expression}.  In the factors of  \eqref{l-factor expression} we insert the entries of $\bs{M}$ so that \eqref{FA assumption} becomes usable, i.e. we use
\[
\frac{1}{G_{t_{k}t_{k}}^{T_{k}}}G_{t_{k}t_{k+1}}^{T_{k k+1}}\,=\,\frac{1}{G_{t_{k}t_{k}}^{T_{k}}}m_{t_{k}}\frac{1}{m_{t_{k}}}G_{t_{k}t_{k+1}}^{T_{k k+1}}\,.
\]
Then similarly to \eqref{K=0 estimate on Z} we use
\[
\absbb{\frac{1}{m_{t_{k}}}G_{t_{k}t_{k+1}}}\,\prec\, \Phi\,,\qquad 
\absbb{\frac{1}{m_{t_{k}}}G_{t_{k}t_{k+1}}^{T_{k k+1}}-\frac{1}{m_{t_{k}}}G_{t_{k}t_{k+1}}}\,\prec\, \Phi^2\,,
\] 
where again the first bound follows from \eqref{FA assumption} and the second bound from \eqref{expansion step} and \eqref{FA assumption}.
\end{proof}

\section{Non-Hermitian Kronecker matrices and proof of Theorem~\ref{thm:no_eigenvalues_non_hermitian}}

Since $\spec(\Xf) \subset \spec_\eps(\Xf)$ (cf. \eqref{eq:def_pseudospectrum}) 
for all $\eps>0$, Theorem \ref{thm:no_eigenvalues_non_hermitian} 
clearly follows from the following lemma.
 
\begin{lem}[Pseudospectrum of $\Xf$ contained in self-consistent pseudospectrum]
\label{lem:pseudospectrum_subset_self_consistent}
Under the assumptions of Theorem~\ref{thm:no_eigenvalues_non_hermitian}, we have that
for each $\eps\in(0,1]$, $\Delta>0$ and $D>0$, there is a constant $C_{\eps,\Delta,D} >0$ such that 
\begin{equation} \label{eq:pseudospectrum_subset_self_consistent}
\P \big( \spec_{\eps}(\Xf) \subset \supnon_{\eps+\Delta} \big) \geq 1- \frac{C_{\eps,\Delta,D}}{N^D}.
\end{equation}
\end{lem}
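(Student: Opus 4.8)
The plan is to combine the Hermitization identity, the averaged local law for $\Hf^\zeta$, and a net argument in $\zeta$. Recall that the eigenvalues of $\Hf^\zeta$ from \eqref{eq:def_Hf_zeta} are $\pm$ the singular values of $\Xf-\zeta$, so
\[
\zeta\in\spec_\eps(\Xf)\iff\smin(\Xf-\zeta)\le\eps\iff\dist(0,\spec\Hf^\zeta)\le\eps,
\]
while $\zeta\in\supnon_{\eps+\Delta}$ is equivalent to $\dist(0,\supp\rho^\zeta)\le\eps+\Delta$. Hence \eqref{eq:pseudospectrum_subset_self_consistent} amounts to showing that, asymptotically with overwhelming probability, every $\zeta\in\C$ with $\dist(0,\supp\rho^\zeta)>\eps+\Delta$ satisfies $\dist(0,\spec\Hf^\zeta)>\eps$. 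First I would reduce to a compact set of parameters: standard operator-norm bounds for the $X_\mu$ and $Y_\nu$ give $\normtwo{\Xf}\le C_0$ a.w.o.p.\ with $C_0$ a model parameter, hence $\smin(\Xf-\zeta)\ge\abs\zeta-C_0>1\ge\eps$ for $\abs\zeta>C_0+1$, so only $\zeta$ in the disc $B\defeq\{\abs\zeta\le C_0+1\}$ are relevant. On $B$, $\Hf^\zeta$ is a Hermitian Kronecker random matrix satisfying \eqref{eq:upper_bound_variances}, \eqref{eq:moments_bounds}, \eqref{eq:bound_coefficients} (with structure matrices bounded by model parameters), its deterministic part $\Af^\zeta=\sum_j a_j^\zeta\otimes E_{jj}$ is Hermitian with $\normtwo{\Af^\zeta}$ bounded uniformly in $\zeta\in B$, and $\rho^\zeta$ is exactly its self-consistent density of states.

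The core estimate, for fixed $\zeta\in B$ with $d^\zeta\defeq\dist(0,\supp\rho^\zeta)>\eps+\tfrac\Delta2$, mimics the proof of Theorem~\ref{thm:no_eigenvalues}. For $z=E+\ii\eta$ with $\abs E\le\eps+\tfrac\Delta4$ one has $\dist(z,\supp\rho^\zeta)\ge d^\zeta-\abs E>\tfrac\Delta4$, so $\max\{1,\dist(z,\supp\rho^\zeta)^{-P}\}\lesssim_\Delta1$ and, by \eqref{eq:bound_rho_dist_support}, $\tfrac1\pi\avg{\Im\Mf^\zeta(z)}\lesssim_\Delta\eta$; since $\normtwo{\Af^\zeta}$ is bounded we are in the situation $\Hboutone\cup\Hbouttwo=\Hb$ (cf.\ \eqref{alt}), so Lemma~\ref{lem:local_law_aux_bound} applies at $z=E+\ii N^{-2/3}$ and, together with $\tfrac1N\sum_j\Tr\Im m_j^\zeta(z)=2\pi L\,\rho^\zeta(z)$, yields $\tfrac1N\Im\Tr(\Hf^\zeta-z)^{-1}\prec_\Delta N^{-2/3}$ uniformly in such $E$. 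Applying Lemma~\ref{lem:initial_estimate_no_eigenvalues_outside}(ii) with $\mathcal E=[-(\eps+\tfrac\Delta4),\eps+\tfrac\Delta4]$, $\eta_0=N^{-2/3}$ and $\Phi\equiv C_\Delta N^{-2/3}\ll(N\eta_0)^{-1}$ then gives $\spec(\Hf^\zeta)\cap\mathcal E=\varnothing$ a.w.o.p., i.e.\ $\smin(\Xf-\zeta)>\eps+\tfrac\Delta4$.

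I would then run this over a net $\mathcal N\subseteq B$ of spacing $N^{-10}$ (so $\abs{\mathcal N}\lesssim N^{20}$): a union bound over the net points $\zeta_k$ with $\dist(0,\supp\rho^{\zeta_k})>\eps+\tfrac\Delta2$ produces an overwhelming-probability event on which $\smin(\Xf-\zeta_k)>\eps+\tfrac\Delta4$ for all such $\zeta_k$, and for an arbitrary $\zeta\in B$ with $\dist(0,\supp\rho^\zeta)>\eps+\Delta$, picking the nearest $\zeta_k$ and using the trivial estimate $\smin(\Xf-\zeta)\ge\smin(\Xf-\zeta_k)-\abs{\zeta-\zeta_k}>\eps$ closes the argument — provided one knows $\dist(0,\supp\rho^{\zeta_k})>\eps+\tfrac\Delta2$. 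Together with the a priori reduction this gives $\spec_\eps(\Xf)\subseteq\supnon_{\eps+\Delta}$ a.w.o.p.

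The main obstacle is precisely this last proviso: I need a quantitative continuity $\bigl|\dist(0,\supp\rho^\zeta)-\dist(0,\supp\rho^{\zeta'})\bigr|\le\omega(\abs{\zeta-\zeta'})$ on $B$ with modulus $\omega(t)\to0$ as $t\to0$, so that $\omega(N^{-10})<\tfrac\Delta2$ for $N$ large. This should follow from the stability of the Dyson equation: $\Af^{\zeta'}-\Af^\zeta$ has norm $\abs{\zeta-\zeta'}$, and on the region $\{\dist(z,\supp\rho^\zeta)\gtrsim\Delta\}$ — which contains a fixed neighbourhood of $[-d^\zeta,d^\zeta]$ — the quantities $\normtwo{\Mf^\zeta(z)}$, $\normtwo{\Mf^\zeta(z)^{-1}}$ (Lemma~\ref{lem:bounds_Mf}) and $\normsp{\lopb^\zeta(z)^{-1}}$ (Lemma~\ref{lem:Lf_invertible}) are uniformly bounded, so a fixed-point argument around $\Mf^\zeta$ produces a solution of the MDE with data $\Af^{\zeta'}$ that is holomorphic on a slightly smaller disc about $0$ and, by uniqueness of the MDE solution on $\Hb$, coincides with $\Mf^{\zeta'}$ there; hence $\supp\rho^{\zeta'}$ stays at distance $\ge d^\zeta-C_\Delta\abs{\zeta-\zeta'}$ from $0$. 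Pushing this perturbation argument uniformly down to the real axis, where the stability constants must not degenerate, is the delicate point; alternatively, one can carry out the whole scheme with the equivalent imaginary-axis set $\wt{\supnon}_\eps$ of \eqref{def of wt D eps}, whose dependence on $\zeta$ is more transparent.
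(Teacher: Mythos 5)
Your overall strategy --- Hermitization, the local law for $\Hf^\zeta$ away from $\supp\rho^\zeta$ at a single fixed $\zeta$, and a net-plus-union-bound in $\zeta$ exploiting the $1$-Lipschitz dependence of $\smin(\Xf-\zeta)$ on $\zeta$ --- is exactly the route the paper takes (the paper reduces to the disc $D(0,N)$ by a crude trace bound and handles large $\zeta$ via part (ii) of Theorem~\ref{thm:no_eigenvalues}, whereas you reduce to a fixed compact disc via $\normtwo{\Xf}\prec 1$; that is a legitimate cosmetic variant provided you actually justify the operator-norm bounds on $X_\mu$ and $Y_\nu$, e.g.\ by the moment method). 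The fixed-$\zeta$ step via Lemma~\ref{lem:local_law_aux_bound} and Lemma~\ref{lem:initial_estimate_no_eigenvalues_outside} is a correct re-derivation of the relevant part of Theorem~\ref{thm:no_eigenvalues}.

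The genuine gap is the one you flag yourself, and it is entirely self-inflicted by your choice of net. Because you place the net points on a generic grid of mesh $N^{-10}$, you are forced to ask whether the grid point nearest to a given $\zeta$ with $\dist(0,\supp\rho^\zeta)>\eps+\Delta$ still satisfies $\dist(0,\supp\rho^{\zeta_k})>\eps+\Delta/2$, i.e.\ to prove quantitative continuity of $\zeta\mapsto\dist(0,\supp\rho^\zeta)$; the perturbation argument you sketch for this would have to control the MDE stability uniformly down to the real axis and is not carried out, so the proof is incomplete at that point. None of this is needed. The set $\mathcal{A}\defeq\supnon_{\eps+\Delta}^{\,c}\cap B$ is \emph{deterministic}, so the net may be chosen inside it: take a maximal $\Delta/8$-separated subset $\{\zeta_k\}\subset\mathcal{A}$, which has at most $C(\Delta)$ elements and, by maximality, covers $\mathcal{A}$ by balls of radius $\Delta/8$. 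Every $\zeta_k$ lies in $\mathcal{A}$ by construction, so your fixed-$\zeta$ estimate applies verbatim and gives $\smin(\Xf-\zeta_k)>\eps+\Delta/2$ a.w.o.p.\ simultaneously for all $k$; for an arbitrary $\zeta\in\mathcal{A}$ the Weyl bound $\smin(\Xf-\zeta)\ge\smin(\Xf-\zeta_k)-\Delta/8>\eps$ then closes the argument. The $O(\Delta)$ margin between the conclusion at $\zeta_k$ and the target at $\zeta$ is what absorbs the displacement, so a mesh of constant size $\sim\Delta$ suffices and no continuity of the self-consistent pseudospectrum in $\zeta$ enters. This is how the paper's one-line ``grid plus union bound via Hoffman--Wielandt'' should be read; with that modification your proof is complete.
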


\begin{proof}
Let $\Hf^\zeta$ be defined as in \eqref{eq:def_Hf_zeta}. Note that $\zeta \in \spec_{\eps}(\Xf)$
if and only if $\dist(0,\spec(\Hf^\zeta)) \leq \eps$. 
 We set 
 \bels{def of wt A}{
 \wt{\Af} \defeq \sum_{i=1}^N \wt{a}_i \otimes E_{ii}.
 }
We first establish that $\spec_\eps(\Xf)$ is contained in $D(0,N)\defeq \{ w \in \C \colon \abs{w} \leq N\}$ a.w.o.p. 
Similarly, as in \eqref{eq:no_eigenvalues_far_outside}, using an analogue of \eqref{eq:trivial_control_x_ij_prec} 
for $\Xf$ instead of $\Hf$, we get
\[ \max_{\zeta\in\spec\Xf} \abs{\zeta}^2 \leq \Tr(\Xf^*\Xf) = \sum_{i,j=1}^N \Tr\big((P_{ij}\Xf)^* (P_{ij}\Xf)\big)\lesssim 
\sum_{i,j=1}^N \abs{P_{ij}\Xf}^2 \prec (1+ \normtwo{\wt{\Af}}^2)N. \]
Thus, all eigenvalues of $\Xf$ have a.w.o.p. moduli smaller than $(1+\normtwo{\wt{\Af}})\sqrt N\leq N$. The above characterization of $\spec_\eps(\Xf)$ and $\eps \leq 1$ yield $\spec_\eps(\Xf) \subset D(0,N)$ a.w.o.p.

We now fix an $\eps\in (0,1]$ and for the remainder of the proof the comparison relation $\lesssim$ is allowed to depend on $\eps$ 
without indicating that in the notation.  
In order to show that the complement of $\spec_\eps(\Xf)$ contains $\supnon_{\eps+\Delta}^c \cap D(0,N)$ a.w.o.p. we will apply 
 Theorem~\ref{thm:no_eigenvalues} 
to~$\Hf^\zeta$ for $\zeta \in\supnon_{\eps+\Delta}^c \cap D(0,N)$. In particular, here we have 
\[
\Af\,=\, \Af^\zeta\,\defeq\, \sum_{i} a_i^\zeta \otimes E_{ii}\,,
\]
where $a_i^\zeta$ is defined as in \eqref{eq:mapping_non_hermitian_to_hermitian2}. 

Now, we conclude that
$\spec(\Hf^\zeta)\cap [-\eps-\Delta/2,\eps+\Delta/2] = \varnothing$ a.w.o.p. for each $\zeta \in {\supnon}_{\eps+\Delta}^c\cap D(0,N)$.
If $\zeta$ is bounded, hence $\Af^\zeta$ is bounded, we can use 
\eqref{eq:no_eigenvalues_hermitian} and we need to show that 
$ [-\eps-\Delta/2,\eps+\Delta/2] \subset \{ \tau \in\R \colon \dist(\tau, \supmeas^\zeta) \ge N^{-\delta} \}$
but this is straightforward since $\zeta\in  {\supnon}_{\eps+\Delta}^c$ implies $\dist(0, \supmeas^\zeta)\ge \eps+\Delta$
by its definition.

For large $\zeta$ we use part (ii) of Theorem~\ref{thm:no_eigenvalues} and we need to show 
that  $[-\eps-\Delta/2,\eps+\Delta/2]+\ii \eta \subset \Hbouttwo$ for any small $\eta$. 
Take $z\in \Hb$ with $|z|\le \eps+\Delta/2$. 
If $|\zeta| \ge \| \wt{\Af}\| + 2\norm{\Sf}^{1/2} +2$, then $\dist(z,\spec(\Af^\zeta))\ge 2\norm{\Sf}^{1/2} +1$,
so the first condition in the definition \eqref{eq:def_Hbouttwo} of $\Hbouttwo$ is satisfied.
The second condition is straightforward since for large $\zeta$ and small $z$, both $\norm{\Af^\zeta-z\id}_2$ and 
$\smin(\Af^\zeta-z\id)$ are comparable with $|\zeta|$.

Hence, Theorem~\ref{thm:no_eigenvalues} is applicable and we conclude that 
$\spec(\Hf^\zeta)\cap [-\eps-\Delta/2,\eps+\Delta/2] = \varnothing$ a.w.o.p. for all $\zeta \in {\supnon}_{\eps+\Delta}^c$.
If $\lambda_1(\zeta)\leq\ldots\leq \lambda_{2LN}(\zeta)$ denote the ordered eigenvalues of $\Hf^\zeta$ then $\lambda_i(\zeta)$ is Lipschitz-continuous in $\zeta$
 by the Hoffman-Wielandt inequality. 
Therefore, introducing a grid in $\zeta$ and applying a union bound argument yield
\begin{equation*}
\sup_{\zeta\in{\supnon}_{\eps+\Delta}^c\cap D(0,N)} \dist(0,\spec(\Hf^\zeta)) \leq \eps  \quad \text{ a.w.o.p.}
\end{equation*}
Since $\zeta \in\spec_\eps(\Xf)$ if and only if $\dist(0,\spec(\Hf^\zeta))\leq \eps$ we obtain $\spec_\eps(\Xf)\cap {\supnon}_{\eps+\Delta}^c \cap D(0,N) = \varnothing$ a.w.o.p. 
As we proved $\spec_\eps(\Xf)\cap D(0,N)^c = \varnothing$ a.w.o.p. before this concludes the proof of 
Lemma~\ref{lem:pseudospectrum_subset_self_consistent}.
\end{proof}

\appendix
\section{An alternative definition of the self-consistent $\eps$-pseudospectrum} \label{app:alt_def_pseudo}

Instead of the self-consistent $\eps$-pseudospectrum ${\supnon}_\eps$ introduced in \eqref{eq:def_support_non_hermitian} one may work with the deterministic set $\wt{\supnon}_\eps$ from \eqref{def of wt D eps} when formulating our main result, Theorem~\ref{thm:no_eigenvalues_non_hermitian}. The advantage of the set $\wt{\supnon}_\eps$ is that it only requires solving the Hermitized Dyson equation \eqref{eq:Dyson_non_Hermitian} for spectral parameters $z$ along the imaginary axis. The following lemma shows that ${\supnon}_\eps$ and $\wt{\supnon}_{\eps}$ are comparable in the sense that  for any $\eps$ we have ${\supnon}_{\eps_1} \subseteq \wt{\supnon}_{\eps} \subseteq {\supnon}_{\eps_2}$ for certain $\eps_1,\eps_2$.

\begin{lemma} Let $\mf$ be the solution to the Hermitized Dyson equation \eqref{eq:Dyson_non_Hermitian} and suppose Assumptions~\ref{assums:non_hermitian} are satisfied. 
There is a positive constant $c$, depending only on model parameters, such that for any $\eps \in (0,1)$ we have the inclusions
\[
\wt{\supnon}_{\eps}\,\subseteq \, {\supnon}_{\sqrt{\eps}}\,,\qquad {\supnon}_{c\2\eps^{27}}\,\subseteq \, \wt{\supnon}_{\eps}\,,
\]
where $\supnon_\eps$ is the self-consistent $\eps$-pseudospectrum from \eqref{eq:def_support_non_hermitian} and $\wt{\supnon}_\eps$ is defined in \eqref{def of wt D eps}.

\end{lemma}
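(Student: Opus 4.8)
The plan is to handle the two inclusions separately after a common reduction. Write $g(\zeta)\defeq\limsup_{\eta\downarrow 0}\eta^{-1}\max_j\abs{\im m_j^\zeta(\ii\eta)}$, so that $\wt\supnon_\eps=\{\zeta:g(\zeta)\ge1/\eps\}$. First I would record that by \eqref{eq:structure_M} the solution is block diagonal, $\Mf^\zeta(z)=\sum_j m_j^\zeta(z)\otimes E_{jj}$, hence $\max_j\abs{\im m_j^\zeta(\ii\eta)}=\normtwo{\Im\Mf^\zeta(\ii\eta)}$; and with $\Vf^\zeta\defeq\sum_j v_j^\zeta\otimes E_{jj}$ from Lemma~\ref{lem:Dyson_non_hermitian} one has $\eta^{-1}\Im\Mf^\zeta(\ii\eta)=\int_\R(\tau^2+\eta^2)^{-1}\Vf^\zeta(\di\tau)$, which is increasing in the positive semidefinite order as $\eta\downarrow0$; therefore $g(\zeta)=\normtwo{\int_\R\tau^{-2}\Vf^\zeta(\di\tau)}\in[0,\infty]$, and since $\supp\Vf^\zeta=\supp\rho^\zeta\subset\R$ we have $\dist(\ii\eta,\supp\rho^\zeta)^2=\dist(0,\supp\rho^\zeta)^2+\eta^2$. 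The Hermitized Dyson equation \eqref{eq:Dyson_non_Hermitian} is the MDE of a data pair with $\Af=\Af^\zeta=\sum_i a_i^\zeta\otimes E_{ii}$ and $\dist(0,\spec a_i^\zeta)=\smin(\wt a_i-\zeta\id)\ge\abs\zeta-\kappa_3$, so by \eqref{eq:support_V} one gets $\dist(0,\supp\rho^\zeta)\ge\abs\zeta-\kappa_3-2\norm\Sf^{1/2}$; hence both $\wt\supnon_\eps$ and $\supnon_{c\eps^{27}}$ (for $c\le1$) lie in a disc $\{\abs\zeta\le C\}$ with $C$ a model parameter, and throughout I may assume $\abs\zeta\le C$, in particular $\normtwo{\Af^\zeta}\lesssim1$.

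For the first inclusion $\wt\supnon_\eps\subseteq\supnon_{\sqrt\eps}$ I would argue by contraposition: the upper bound in \eqref{eq:Im_Mf_upper_bound_away_support} applied to $\Mf^\zeta$ gives $\eta^{-1}\normtwo{\Im\Mf^\zeta(\ii\eta)}\le\dist(\ii\eta,\supp\rho^\zeta)^{-2}\le\dist(0,\supp\rho^\zeta)^{-2}$ for all $\eta>0$, hence $g(\zeta)\le\dist(0,\supp\rho^\zeta)^{-2}$. Thus $\zeta\notin\supnon_{\sqrt\eps}$ forces $g(\zeta)<1/\eps$, i.e.\ $\zeta\notin\wt\supnon_\eps$.

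For the second inclusion it suffices to prove that $g(\zeta)<1/\eps$ implies $\dist(0,\supp\rho^\zeta)>c\eps^{27}$. The first step extracts a boundary value at $0$: since $\partial_z\Mf^\zeta(\ii\eta)=\int_\R(\tau-\ii\eta)^{-2}\Vf^\zeta(\di\tau)$ with $\abs{(\tau-\ii\eta)^{-2}}\le(\tau^2+\eta^2)^{-1}$, decomposing the integrand into positive semidefinite pieces yields $\normtwo{\partial_z\Mf^\zeta(\ii\eta)}\lesssim g(\zeta)<1/\eps$ for every $\eta>0$; together with $\normtwo{\Mf^\zeta(\ii)}\le1$ this shows $\eta\mapsto\Mf^\zeta(\ii\eta)$ converges as $\eta\downarrow0$ to a matrix $\Mf_*$ with $\normtwo{\Mf_*}\lesssim1/\eps$ and $\normtwo{\Mf^\zeta(\ii\eta)-\Mf_*}\lesssim\eta/\eps$, and since $\normtwo{\Im\Mf^\zeta(\ii\eta)}\le\eta\,g(\zeta)$ the limit $\Mf_*$ is Hermitian. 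Passing to the limit in \eqref{eq:Dyson_non_Hermitian}, and using $\normtwo{\Mf^\zeta(\ii\eta)^{-1}}\le\eta+\normtwo{\Af^\zeta}+\norm\Sf\normtwo{\Mf^\zeta(\ii\eta)}\lesssim1/\eps$ (cf.\ \eqref{eq:bound_Mf_inverse}) to see that $\Mf_*$ is invertible, one obtains that $\Mf_*$ solves the real equation $-\Mf_*^{-1}=-\Af^\zeta+\sopb[\Mf_*]$ with $\normtwo{\Mf_*}+\normtwo{\Mf_*^{-1}}\lesssim1/\eps$.

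The second step is a continuity argument upgrading this to a holomorphic extension of $\Mf^\zeta$ to $\{z\in\C:\abs z<r\}$ with $r\gtrsim\eps^{27}$: starting from the quantitative control on the imaginary axis and at $\Mf_*$, one propagates the a priori bounds $\dist(z,\supp\rho^\zeta)\ge r$ and $\normtwo{\Mf^\zeta(z)}+\normtwo{\Mf^\zeta(z)^{-1}}\lesssim1/\eps$ across the disc using the stability estimates \eqref{eq:norm_mF_estimate}, \eqref{eq:lopb_bounded_sp} and \eqref{eq:lopb_bounded} of Lemma~\ref{lem:Lf_invertible} applied to $\Mf^\zeta$ (which control $\normtwo{\partial_z\Mf^\zeta(z)}$ by a fixed power of $\normtwo{\Mf^\zeta(z)}$, $\normtwo{\Mf^\zeta(z)^{-1}}$ and $\dist(z,\supp\rho^\zeta)^{-1}$, cf.\ also Corollary~\ref{coro:Mf_lipschitz}), together with the elementary fact that on such a region $\Im\Mf^\zeta$ is too small for $\supp\rho^\zeta$ to enter it. Since $\Mf^\zeta$ then extends holomorphically across $(-r,r)$ and equals the Hermitian matrix $\Mf_*$ at $0$, one concludes $\supp\Vf^\zeta\cap(-r,r)=\varnothing$, i.e.\ $\dist(0,\supp\rho^\zeta)\ge r\gtrsim\eps^{27}$, and absorbing the constant into $c$ finishes the proof. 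The hard part will be exactly this continuity argument: every stability estimate in Section~\ref{subsec:stability_mde} degrades like a fixed negative power of $\dist(z,\supp\rho^\zeta)$, which is the very quantity to be bounded below, so the extension cannot be done in one step and requires a bootstrap in which the admissible region is fixed self-consistently — at each stage the smallness of $\Im\Mf^\zeta$ inherited from $g(\zeta)<1/\eps$ forces $\dist(z,\supp\rho^\zeta)$ to stay bounded below, which in turn validates the bounds on a slightly larger region; tracking the polynomial losses along this loop is what produces the explicit exponent $27$.
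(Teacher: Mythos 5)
Your treatment of the first inclusion and your setup for the second (reduction to $\abs{\zeta}\lesssim 1$, the bounds $\norm{\mf(\ii\eta)}\lesssim 1/\eps$ and $\norm{\mf(\ii\eta)^{-1}}\lesssim 1/\eps$, and the plan to extend $\mf$ analytically to a disc of radius $\sim\eps^{27}$) match the paper. But the two steps you yourself identify as "the hard part" contain genuine gaps. First, the circularity you point out is not resolved by your bootstrap, and the paper resolves it differently: it never invokes the $\disvz$-dependent bounds \eqref{eq:norm_mF_estimate} or \eqref{eq:lopb_bounded_sp}. Instead it returns to the exact identity \eqref{eq:norm_mF} for $1-\normsp{\mF}$ and evaluates it at $z=\ii\eta$ using only $\eta\lesssim \im m_i(\ii\eta)\lesssim \eta/\eps$ and $\norm{\mf},\norm{\mf^{-1}}\lesssim 1/\eps$ --- all consequences of $\zeta\notin\wt{\supnon}_\eps$ that make no reference to $\disvz$. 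This gives $1-\normsp{\mF(\ii\eta)}\gtrsim\eps^4$, hence $\norm{\lopb^{-1}}\lesssim\eps^{-12}$, and then a \emph{single} application of the quadratic stability equation \eqref{eq:stability_equation} (implicit function theorem) yields analyticity on a disc of radius $\sim \norm{\lopb^{-1}}^{-2}\norm{\mf}^{-3}\sim\eps^{27}$ around $\ii\eta$; letting $\eta\downarrow 0$ covers $\{\abs{z}\le c\eps^{27}\}$. Your proposed self-consistent loop also rests on the claim that smallness of $\Im\Mf^\zeta$ forces $\dist(z,\supmeas)$ to stay bounded below, which is false in general: $\Im\Mf(z)=\int \Im z\,\abs{\tau-z}^{-2}\Vf(\di\tau)$ can be small even when $\supp\Vf$ comes arbitrarily close to $z$, provided little mass sits there.

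Second, the concluding inference "$\Mf^\zeta$ extends holomorphically across $(-r,r)$ and equals a Hermitian matrix at $0$, hence $\supp\Vf^\zeta\cap(-r,r)=\varnothing$" is not valid. A Stieltjes transform can admit a holomorphic continuation from $\Hb$ across an interval lying inside the support (the semicircle law does exactly this on $(-2,2)$), and Hermiticity of the boundary value at the single point $E=0$ does not propagate to the whole interval; the density $\pi^{-1}\Im\Mf(E)$ is a nonnegative real-analytic function and may vanish at $0$ while being positive nearby. The paper closes this gap by observing that for real $z$ the stability equation \eqref{stability equation for d=mz} preserves the real subspace $\{\Delta:\Delta_i^*=\Delta_i\}$, so the implicit function theorem applied \emph{within that subspace} shows the unique small solution satisfies $\im\Delta=0$, i.e.\ $\im\mf(E)=0$ for all $E$ in the stability disc; only then does $\rho^\zeta([-c\eps^{27},c\eps^{27}])=0$ follow. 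Without this (or an equivalent symmetry argument) your proof does not exclude support at, say, $E=\eps^{100}$.
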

\begin{proof}
The inclusion $\wt{\supnon}_{\eps}\subseteq  {\supnon}_{\sqrt{\eps}}$ is trivial because $m_j^{\zeta}$ is the Stieltjes transform of $v_j^{\zeta}$. So we concentrate on the inclusion ${\supnon}_{c\eps^{27}} \subseteq \wt{\supnon}_{\eps}$.
 We fix $\zeta \in {\C \setminus} \wt{\supnon}_\eps$ and suppress it from our notation in the following, i.e. $\mf=\mf^{\zeta}, 
v_j=v_j^{\zeta}$, etc.
Recall that by assumption we have (cf. \eqref{def of wt A})
\[
\norm{\wt \Af}\,\lesssim\, 1\,.
\]
Since any large enough $\zeta$ is contained in both sets $\C\setminus \wt{\supnon}_\eps$ and $\C \setminus \supnon_\eps$ 
 by \eqref{eq:support_V} and the upper bound in \eqref{eq:Im_Mf_upper_bound_away_support},  
we may assume that $\abs{\zeta}\lesssim 1$. 
 We use the representation of $m_i$ as the Stieltjes transform of $v_i$ and that $v_i$ has bounded support to see
\[
\abs{\scalar{x}{m_i(z)y}}\,\le\, \frac{1}{2}\int_{\R} \frac{\scalar{x}{v_i(\dd \tau)x}+\scalar{y}{v_i(\dd \tau)y}}{\abs{\tau-z}}\,\lesssim\, \frac{1}{\eta}\pb{ \scalar{x}{\im m_i(z)x}+\scalar{y}{\im m_i(z)y}}\,,
\]
for any $x,y \in \C^K$, where $K=2L$. 
In particular
\bels{bound of m in terms of im m}{
\abs{m_i(z)}\,\lesssim\, \frac{\abs{\im m_i(z)}}{\eta}\,.
}
Fix an $\eta \in (0,1)$ for which the inequality
\bels{eta inequality for wt D eps}{
\frac{1}{\eta}\norm{\im \mf(\ii \eta)} \le \frac{2}{\eps}
}
holds true. Since $\zeta \in \C\setminus \wt{\supnon}_\eps$ such an $\eta$ can be chosen arbitrarily small. Then we have
\bels{eps bounds}{
\norm{\mf(\ii\eta)}\,\lesssim\, \frac{1}{\eps}\,,\qquad \norm{\mf(\ii\eta)^{-1}}\,\lesssim\,\frac{1}{\eps}\,,\qquad \eta \,\lesssim\,  \im m_i(\ii\eta)\,\lesssim\, \frac{\eta}{\eps}\,.
}
The first inequality follows from \eqref{bound of m in terms of im m} and \eqref{eta inequality for wt D eps}, the second inequality from \eqref{eq:bound_Mf_inverse} and the third from \eqref{eta inequality for wt D eps} and the bounded support of $v_i$. 
In particular, by the formula \eqref{eq:norm_mF} for the norm of $\mF$ we have 
\bels{eps stability}{
1-\normsp{\mF(\ii\eta)}\,\gtrsim\, \eps^4\,.
}
To see \eqref{eps stability} we simply follow the calculation in the proof of Lemma~\ref{lem:properties_F_operator} but instead of using the bounds \eqref{eq:Mf_upper_bound_eta}, \eqref{eq:bound_Mf_inverse} and \eqref{eq:Im_Mf_upper_bound_away_support}  on $\norm{\mf}$ and $\norm{\mf^{-1}}$ and $\im m_i$ we use \eqref{eps bounds}. Similarly we find
\[
\norm{\cal{C}_{\Wf}}\norm{\cal{C}_{\Wf}^{-1}}\,\lesssim\, \frac{1}{\eps^3}\,,\qquad 
\norm{\cal{C}_{\sqrt{\im\Mf}}}\norm{\cal{C}_{\sqrt{\im\Mf}}^{-1}}\,\lesssim\,\frac{1}{\eps}\,.
\]
By \eqref{eq:decomposition_Lf} we conclude 
\[
\norm{\lopb^{-1}}_{\rm{sp}}\,\lesssim\, \frac{1}{\eps^8}\,.
\]
Using \eqref{eq:lopb_bounded} and the bound on $\norm{\mf}$ in \eqref{eps bounds}  we improve this bound on the $\norm{\2\cdot\2}_{\rm{sp}}$-norm  to a bound on the $\norm{\2\cdot\2}$-norm,
\[
\norm{\lopb^{-1}}\,\lesssim\, \frac{1}{\eps^{12}}\,.
\]
We are therefore in the linear stability regime of the Dyson equation and 
from the stability equation (cf. \eqref{eq:stability_equation}) for the difference $\Delta\,\defeq\mf(z)-\mf(\ii\eta)$, i.e. from
\bels{stability equation for d=mz}{
\scr{L}[\Delta]\,=\, (z-\ii\eta)\mf(\ii\eta)^2 + \frac{1}{2}\pB{\mf(\ii\eta)\scr{S}[\Delta]\Delta +\Delta\scr{S}[\Delta]\mf(\ii\eta) }\,,
}
we infer 
\[
\norm{\mf(z)-\mf(\ii\eta)}\,\lesssim\, \norm{\lopb^{-1}}\norm{\mf}^2\abs{z-\ii\eta}\,\lesssim\, \frac{\abs{z-\ii\eta}}{\eps^{14}}\,,
\]
for any $z \in \Hb$ with 
\[
\abs{z-\ii\eta}\,\le\, \frac{C}{\norm{\lopb^{-1}}^2\norm{\mf}^3}\,\lesssim\, \eps^{27}\,,
\]
where $C\sim1$ is a constant depending only on model parameters. Note that in \eqref{stability equation for d=mz} we symmetrized the quadratic term in $\Delta$ which can always be done since every other term of the equation is invariant under taking the Hermitian conjugate.
In fact, we see that $\mf$ can be extended analytically to an $\eps^{27}$-neighborhood of $\ii \eta$. Since $\eta$ can be chosen arbitrarily small we find an analytic extension of $\mf$ to all $z \in \C$ with $\abs{z} \le c\2 \eps^{27}$ for some constant $c \sim 1$. We denote this extension by the same symbol $\mf=(m_1, \dots,m_N)$ as the solution to the Dyson equation.  By definition of $\wt{\supnon}_\eps$ we have $\im m_i(0)=0$ and it is easy to see by the following argument that for any $z \in \R$ the imaginary part still vanishes as long as we are in the  linear stability regime. Thus $\rho^\zeta([-c\2 \eps^{27},c\2 \eps^{27}])=0$: The stability equation \eqref{stability equation for d=mz} evaluated at $\eta=0$ and $z \in \R$ is an equation on the space $\{\Delta \in (\C^{K \times K})^N: \Delta_i^* =\Delta_i, \, i=1, \dots,N\}$, i.e. for any $\Delta$ in this space both sides of the equation remain inside this space. Thus by the implicit function theorem applied within this subspace of $(\C^{K \times K})^N$ we conclude that the solution to \eqref{stability equation for d=mz} satisfies $\Delta=\Delta^*$, or equivalently $\im \Delta=0$, for $z \in \R$ inside the linear stability regime.  
Since $\rho^\zeta([-c\2 \eps^{27},c\2 \eps^{27}])=0$ we thus obtain $\zeta \in \C \setminus \supnon_{c \eps^{27}}$ which yields the missing inclusion. 
\end{proof}

\section{Proofs of Theorem~\ref{thm:global_law} and Lemma \ref{lem:local_law_aux_bound}} \label{app:proof_local_law}

For the reader's convenience, 
we now state and prove the local law for $\Hf$, Lemma~\ref{lem:local_law} below.
Its first part is designed for all spectral parameters $z$, where the
Dyson equation, \eqref{eq:Dyson}, is stable and its solution $\mf$ is bounded; here the  local law holds down to
the scale  $\eta =\im z \ge N^{-1+\gamma}$ that is optimal near the self-consistent spectrum.
The second part is valid away from the self-consistent  spectrum; in this regime the Dyson equation is always  stable and the local
law holds down to the real line, however the dependence of our  estimate on the distance from the spectrum is not
optimized. For the proof of Lemma~\ref{lem:local_law_aux_bound}, the second part 
is sufficient, but we also give the first part for completeness. 
For simplicity we state the first part under the  condition that $\Af=\sum_i a_i \otimes E_{ii}$ is bounded;  in the second  part 
we relax this condition to include the assumptions of Lemma~\ref{lem:local_law_aux_bound}.
From now on, we will also consider $\kappa_4, \ldots, \kappa_9$ 
from \eqref{eq:A_two_bounded}, \eqref{eq:def_Hboutone}, \eqref{eq:def_Hbouttwo} and \eqref{eq:def_Hbin} 
below, respectively, as model parameters.

\begin{lem}[Local law] \label{lem:local_law}
Fix $K\in \N$. 
Let $\Af=\sum_{i=1}^N a_i \otimes E_{ii}$ be a deterministic Hermitian matrix. Let $\Hf$ be a Hermitian random matrix as in~\eqref{eq:def_Hf} 
satisfying Assumptions \ref{assums:Hermitian_local_law}, i.e., \eqref{eq:upper_bound_variances}, \eqref{eq:moments_bounds} and \eqref{eq:bound_coefficients} hold true. 
\begin{enumerate}[(i)]
\item (Stable regime) 
Let $\gamma, \kappa_4, \kappa_5, \kappa_6>0$. Assume that $\| \Af\|_2  \le \kappa_4$ and  define
\begin{equation} \label{eq:def_Hbin}
 \Hbin \defeq \bigg\{w \in \Hb \colon \sup_{s \geq 0 } \norm{\mf(w+\ii s)} \leq \kappa_5, \;
  \sup_{s\geq 0} \normsp{\Lf^{-1}(w+\ii s) } \leq \kappa_6 \quad  \text{and}\;  \im w  \geq N^{-1+\gamma}\ \bigg\}. 
\end{equation} 
Then, we have
\begin{equation} \label{eq:local_law_inside_entry}
\max_{i,j=1}^N\absb{G_{ij}(z) -m_i(z)\delta_{ij}} \prec \, \frac{1}{1 + \eta}\sqrt{\frac{\norm{\Im \mf(z)}}{{N\eta}}} + 
\frac{1}{(1 + \eta^2) \sqrt N} + \frac{1}{(1+\eta^2) N\eta}
\end{equation}
uniformly for $z \in\Hbin$. 
Moreover, 
if $c_1, \ldots, c_N \in \C^{K\times K}$ are deterministic and satisfy $\max_{i=1}^N\abs{c_i} \leq 1$ then we have
\begin{equation} \label{eq:local_law_inside_average}
\absB{\frac{1}{N}\sum_{i=1}^N \left[c_i\left( G_{ii}(z) - m_i(z) \right)\right] } \prec \frac{1}{1+\eta}\Big(\frac{1}{N\eta} +\frac{1}{N} \Big) 
\end{equation}
uniformly for $z \in\Hbin$. 
\item (Away from the spectrum) Let $\kappa_7, \kappa_8, \kappa_9 >0$ be fixed. 
Assume that \eqref{eq:A_two_bounded} holds true
and $\Hboutone$ and $\Hbouttwo$ are defined as in \eqref{eq:def_Hbout}. 
Then there are universal  constants $\delta>0$ and $P \in \N$ such that  
\begin{equation} \label{eq:local_law_outside_entry}
\max_{i,j=1}^N\absb{G_{ij}(z) -m_i(z)\delta_{ij}} \prec \, \max\bigg\{\frac{1}{\disvz[2]}, \frac{1}{\disvz[P]}\bigg\} \frac{1}{\sqrt N} 
\end{equation}
uniformly for $z \in \big(\Hboutone \cap \{ w \in \Hb \colon \drho(w) \geq N^{-\delta}\}\big) \cup \Hbouttwo$.

Moreover, if $c_1, \ldots, c_N \in \C^{K\times K}$ are deterministic  and satisfy $\max_{i=1}^N\abs{c_i} \leq 1$ then we have
\begin{equation} \label{eq:local_law_outside_average}
\absB{\frac{1}{N}\sum_{i=1}^N \left[c_i\left( G_{ii}(z) - m_i(z) \right)\right] } \prec \max\bigg\{\frac{1}{\disvz[2]}, \frac{1}{\disvz[P]}\bigg\} \frac{1}{N}
\end{equation}
uniformly for $z \in \big(\Hboutone \cap \{ w \in \Hb \colon \drho(w) \geq N^{-\delta}\}\big)\cup \Hbouttwo$.
\end{enumerate}
\end{lem}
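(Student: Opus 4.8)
The plan is to reduce both parts to a single quantitative statement about the random control parameter $\Lambda$ from \eqref{eq:def_Lambda_o_Lambda_HS} at fixed $E=\Re z$, and then to run it down in $\eta$ by a continuity argument. First I would record the a priori bound $\Lambda\prec\eta^{-2}$ for $\eta\ge\max\{1,\abs{E},\normtwo{\Af}\}$ from \eqref{eq:behaviour_lambda_large_eta}; there $\Lambda\le\vartheta$, since the boundedness of $\norm{\mf}$, of $\norm{\mf^{-1}}$ (via \eqref{eq:bound_Mf_inverse}), of $\norm{\Sf}$ (Lemma~\ref{lem:norm_Sf}) and of $\norm{\Lf^{-1}}$ (via \eqref{eq:comparing_norms_of_Lf_and_lopb} and \eqref{eq:lopb_bounded}) keeps $\vartheta$ from \eqref{def of vartheta} bounded below by a fixed negative power of $N$. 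The descent to smaller $\eta$ is then a bootstrap: $\eta\mapsto\Lambda(\eta)$ is Lipschitz with constant at most a power of $N$ (resolvent identity, the trivial bound \eqref{eq:trivial_bound_resolvent}, and Corollary~\ref{coro:Mf_lipschitz}), and Lemma~\ref{lem:Gap_Lemma}(iii) yields, wherever $\psi(\eta)\le N^{-\delta}$, the dichotomy $\Lambda\,\chi(\Lambda\le\vartheta)\prec_\delta\varphi(\eta)$; since $\varphi\ll\vartheta$ on the relevant range, $\Lambda$ cannot cross the window $(\varphi,\vartheta]$ and so remains $\prec\varphi$ throughout. Re-inserting this rough bound into $\varphi$ — replacing $\avg{\Im\Gf}$ by $\pi\rho+\norm{\gf-\mf}$ and using the Ward identity \eqref{eq:ward_identity} to express $\Lambda_{\rm{hs}},\Lambda_{\rm{w}}$ — turns \eqref{Lambda gap bound}/\eqref{eq:gap_lemma_bound_hermitian} into a self-consistent inequality for $\Lambda$ whose solution is the asserted entrywise estimate.

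For part~(i), the definition \eqref{eq:def_Hbin} of $\Hbin$ together with $\normtwo{\Af}\le\kappa_4$ supplies precisely the boundedness needed ($\norm{\mf}\le\kappa_5$, $\normsp{\Lf^{-1}}\le\kappa_6$, hence all derived quantities bounded by model parameters) and gives $\psi(\eta)\lesssim(N\eta)^{-1}\le N^{-\gamma}$, so Lemma~\ref{lem:Gap_Lemma}(iii) applies with any $\delta<\gamma$. Carrying out the bootstrap and tracking the mild $\eta$-dependence of $\norm{\mf}$ (which produces the prefactors $(1+\eta)^{-1}$ and $(1+\eta^2)^{-1}$) yields \eqref{eq:local_law_inside_entry}. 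I would then feed $\Lambda\prec\Phi/\norm{\mf^{-1}}$, with $\Phi$ equal to $\norm{\mf^{-1}}$ times the right-hand side of \eqref{eq:local_law_inside_entry}, into Lemma~\ref{lem:avg_tech_local_law}: the fluctuation averaging of Proposition~\ref{pro:fluctuation_averaging} is built into it, and after bounding $\Lambda_{\rm{w}}$ once more by \eqref{eq:ward_identity} and using the bounds on $\Lf^{-1}$ and its adjoint, \eqref{eq:local_law_average_general} collapses to \eqref{eq:local_law_inside_average}.

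For part~(ii) the boundedness and stability are not assumed but are supplied by Section~\ref{sec:Dyson_equation}: on $\Hboutone\cap\{\drho(w)\ge N^{-\delta}\}$ one has $\norm{\mf}=\normtwo{\Mf}\le\disvz[-1]\le N^{\delta}$, $\normtwo{\Mf^{-1}}\lesssim\disvz[-1]$ (by \eqref{eq:bound_Mf_inverse}, $w$ being bounded), $\normsp{\Lf^{-1}}\lesssim\disvz[-q]$ for a fixed $q$ (Lemma~\ref{lem:Lf_invertible}) and $\rho(z)\lesssim\eta\,\disvz[-2]$ (\eqref{eq:bound_rho_dist_support}); on $\Hbouttwo$ these are all $\lesssim1$ since $w$ is a fixed distance from $\spec\Af$, hence from $\supmeas$. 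To reach the real axis I would use the ``run twice'' mechanism: a first pass of the bootstrap, valid for $\eta$ above a polynomially small scale $\eta_0$, already gives the averaged bound $\frac1N\Im\Tr\Gf(z)\prec\disvz[-P]((N\eta)^{-2}+N^{-1})$, which by Lemma~\ref{lem:initial_estimate_no_eigenvalues_outside} (in its uniform form (ii)) excludes eigenvalues of $\Hf$ in a bounded neighborhood of $(\Hboutone\cap\{\drho(w)\ge N^{-\delta}\})\cup\Hbouttwo$ a.w.o.p.; hence $\normtwo{\Gf(z)}\prec\max\{1,\disvz[-1]\}$ there for \emph{every} $\eta>0$, and re-running the error estimates of Lemma~\ref{lem:initial_error_estimates} with $\abs{G_{ii}}\prec\max\{1,\disvz[-1]\}$ in place of the Ward-identity bound removes the $(N\eta)^{-1}$ blow-ups, so the second pass of the bootstrap reaches $\eta\downarrow0$ and gives \eqref{eq:local_law_outside_entry}; \eqref{eq:local_law_outside_average} then follows from Lemma~\ref{lem:avg_tech_local_law} as in part~(i).

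The hard part will be the bookkeeping in these bootstraps. In part~(ii) one must track every power of $\disvz[-1]$, $\norm{\mf^{-1}}$ and $\normsp{\Lf^{-1}}$ through \eqref{def of vartheta}, through the definition of $\varphi$, and through \eqref{eq:local_law_average_general}, and verify that $\psi(\eta)\le N^{-\delta}$ and $\varphi(\eta)\ll\vartheta$ hold \emph{uniformly} along the entire descent — this relies on $\disvz\ge\eta$, on $\disvz\ge N^{-\delta}$ with $\delta$ chosen small, and on $\rho,\norm{\Im\mf}\to0$ as $\eta\downarrow0$ — while collecting the resulting exponent into the universal constant $P$ of \eqref{eq:local_law_outside_entry}. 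One also has to make precise the a priori operator-norm control $\normtwo{\Gf}\prec\max\{1,\disvz[-1]\}$ on the full domain — the ``second run'' — since this is what lets the error terms \eqref{eq:error_terms} be bounded uniformly down to the real line. The purely probabilistic ingredients (the scalar large-deviation bounds behind Lemma~\ref{lem:initial_error_estimates} and the moment computation in Proposition~\ref{pro:fluctuation_averaging}) are routine once this deterministic scaffolding is in place.
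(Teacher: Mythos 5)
Your proposal follows essentially the same route as the paper's proof: the large-$\eta$ a priori bound plus the dichotomy that $\Lambda$ cannot cross $(\varphi,\vartheta]$ combined with stochastic continuity for the entrywise law, fluctuation averaging via Lemma~\ref{lem:avg_tech_local_law} for the averaged law, and for part~(ii) a first pass (precisely the content of Lemma~\ref{lem:local_law_aux_bound}) whose averaged bound feeds Theorem~\ref{thm:no_eigenvalues} to exclude eigenvalues and thereby supply the deterministic resolvent bound $\normtwo{\Gf}\lesssim \disvz[-1]$ enabling the second pass down to the real axis. One caveat on your claim that on $\Hbouttwo$ all relevant quantities are $\lesssim 1$: with $\normtwo{\Af}\le N^{\kappa_7}$ the individual norm $\norm{\mf^{-1}}$ need not be bounded there; it is the product $\norm{\mf}\norm{\mf^{-1}}$ that is controlled, and only via the condition $\norm{\Af-w\id}_2/\smin(\Af-w\id)\le\kappa_9$ built into \eqref{eq:def_Hbouttwo} — your bookkeeping must route all estimates through this product.
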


The local laws  \eqref{eq:local_law_outside_entry} and \eqref{eq:local_law_outside_average} hold as stated with 
the alternative definitions of the sets $\Hboutone$ and $\Hbouttwo$ given after Lemma~\ref{lem:local_law_aux_bound}.
\begin{proof}[Proof of Theorem~\ref{thm:global_law}]
Let $\mf$ be the unique solution of \eqref{eq:Dyson} with positive imaginary part, where $\alpha_\mu\defeq\wt{\alpha}_\mu$,
 $\beta_\nu \defeq 2\wt{\beta}_\nu = \wt{\beta}_\nu + \wt{\gamma}_\nu^*$ and $a_j \defeq \wt{a}_j$. 
Defining $\rho_N$ as in \eqref{eq:def_self_cons_dens_states}, it is now a standard exercise to obtain \eqref{eq:global_law} 
from~\eqref{eq:local_law_outside_average}, 
since $z \mapsto (NL)^{-1} \Tr((\Hf_N-z)^{-1})$ is the Stieltjes transform of $\mu_{\Hf_N}$. 
\end{proof}

\begin{proof}[Proof of Lemma~\ref{lem:local_law}]
We start with the proof of part (i). 
 For later use, we will present the proof for all spectral parameters $z$ in a slightly larger set than $\Hbin$, namely in the set
\begin{equation} \label{eq:def_Hbinprime}
\begin{aligned}
\Hbin' \defeq \bigg\{w \in \Hb \colon \sup_{s \geq 0 } \big(1+ \| \Af-w-\ii s\|_2\big)\norm{\mf(w+\ii s)} \leq \kappa_5, \hspace{4cm} \\
  \hspace{4cm}\sup_{s\geq 0} \normsp{\Lf^{-1}(w+\ii s) } \leq \kappa_6 \text{ and }  \im w  \geq N^{-1+\gamma}\bigg\}. 
\end{aligned}
\end{equation}
Under the condition $\normtwo{\Af}\le \kappa_4$, it is easy to see $\Hbin\subset \Hbin'$ 
perhaps with somewhat larger
$\kappa$-parameters. 
Furthermore, we relax the condition $\| \Af\|_2\le \kappa_4$ to $\|\Af\|_2\le N^{\kappa_7}$ with some positive  constant $\kappa_7$.
We also restrict our attention to the regime $|E|\le N^{\kappa_7+1}$ since the complementary regime
 will be covered by the regime \eqref{eq:def_Hbouttwo} in part (ii). 
Let $\varphi$ and $\psi$ be defined as in part (iii) of Lemma~\ref{lem:Gap_Lemma} and recall the
definition of $\vartheta$ from \eqref{def of vartheta}.

{\emph{Proof of \eqref{eq:local_law_inside_entry}}}:
We first show that 
\begin{equation} \label{eq:gap_lemma_no_char_function}
 \Lambda(E+\ii\eta) \prec \varphi 
\end{equation}
uniformly for $E +\ii\eta \in\Hbin'$ and $|E|\le N^{\kappa_7+1}$. 

We start with   some auxiliary estimates. 
By the definition of $\Hbin'$ in \eqref{eq:def_Hbinprime} and setting $\af \defeq (a_1, \ldots, a_N)$, we have
\begin{equation} 
\label{eq:M_two_bounded}
\norm{\mf(z)} \, \lesssim \, \frac{1}{ 1 + \norm{\af-z}} \, \lesssim \, 1,
\end{equation}
uniformly for $z \in \Hbin'$. We remark that $\norm{\af} = \normtwo{\Af}$. 

We now verify that, uniformly for $z \in \Hbin'$, we have
\begin{equation}\label{eq:product_condition_mf}
\norm{\mf(z)} \norm{\mf^{-1}(z)}\lesssim 1.
\end{equation}
Applying $\norm{\genarg}$ to \eqref{eq:Dyson} as well as using \eqref{eq:norm_Sf} and \eqref{eq:M_two_bounded}, we get that
\begin{equation}
 \norm{\mf^{-1}(z)} \, \lesssim \, \norm{\af-z} + 1 \lesssim 1 + \abs{z} + \norm{\af}  
\label{eq:mf_inverse_bounded}
\end{equation}
for $z \in \Hbin'$. Thus, combining the first bounds in \eqref{eq:M_two_bounded} and in \eqref{eq:mf_inverse_bounded} yields \eqref{eq:product_condition_mf}.

From the definition of $\Hbin'$ in \eqref{eq:def_Hbinprime},  using \eqref{eq:M_two_bounded}, \eqref{eq:lopb_bounded} and \eqref{eq:comparing_norms_of_Lf_and_lopb}, we obtain 
\begin{equation}
\norm{\Lf^{-1}} \, \lesssim \, 1, \qquad  \norm{(\Lf^{-1})^*} \, \lesssim\,  1, \label{eq:Lf_bounded}
\end{equation}
where the adjoint is introduced above \eqref{eq:def_matrix_valued_scalar_product}.

We will now use  part  (iii) of Lemma \ref{lem:Gap_Lemma} to prove \eqref{eq:gap_lemma_no_char_function}.
To check the condition $\psi(\eta) \leq N^{-\delta}$ in that lemma,
 we use \eqref{eq:M_two_bounded}, \eqref{eq:Lf_bounded} and \eqref{eq:product_condition_mf} to obtain 
$\psi(\eta) \lesssim 1/(N\eta)$. Hence, $\psi(\eta) \leq N^{-\gamma/2}$ for $\eta \geq N^{-1+\gamma}$ and we choose $\delta =\gamma/2$ in~\eqref{eq:gap_lemma_bound_hermitian}. 

We now estimate $\varphi$ and $\vartheta$ in our setting. 
From \eqref{eq:product_condition_mf}, \eqref{eq:M_two_bounded} and \eqref{eq:Lf_bounded}, we conclude that $\varphi \lesssim \norm{\mf} \Psi$, where we introduced the control parameter
\[  \Psi \defeq \sqrt{\frac{\norm{\Im \mf}}{N\eta}} + \frac{\norm{\mf}}{\sqrt{N}} + \frac{\norm{\mf}}{N\eta} . \]
We note that the factor $\norm{\mf}$ is kept in the bound $\varphi \lesssim \norm{\mf} \Psi$ and the definition of $\Psi$ to control $\norm{\mf^{-1}}$ factors 
via \eqref{eq:product_condition_mf} later and to track 
the correct dependence of the right-hand sides of \eqref{eq:local_law_inside_entry} 
and \eqref{eq:local_law_inside_average} on $\eta$.
For the second purpose, we will use the following estimate. 
Combined with \eqref{eq:Mf_upper_bound_eta}, the bound \eqref{eq:M_two_bounded} yields 
\begin{equation}
\norm{\mf} \, \lesssim \, \frac{1}{1 + \disvz}. \label{eq:m_bound_aux1} 
\end{equation}
For $\vartheta$, we claim that 
\begin{equation} \label{eq:estimate_theta}
 \vartheta \gtrsim (1+ \abs{z}+ \norm{\af})^{-1}, \qquad \vartheta \gtrsim \norm{\mf}. 
\end{equation}
Indeed, for the first bound, we apply \eqref{eq:norm_Sf}, \eqref{eq:M_two_bounded}, \eqref{eq:Lf_bounded} and  the second bound in \eqref{eq:mf_inverse_bounded} to  
the definition of $\vartheta$, \eqref{def of vartheta}. Using \eqref{eq:product_condition_mf} instead of \eqref{eq:M_two_bounded} and \eqref{eq:mf_inverse_bounded} yields the second bound. 

Now, to prove \eqref{eq:gap_lemma_no_char_function}, we show that $\char(\Lambda \leq \vartheta) =1$ a.w.o.p. for $\eta \geq N^{-1+\gamma}$ on the left-hand side of \eqref{eq:gap_lemma_bound_hermitian}. 
The first step is to establish $\Lambda \leq \vartheta$ for large $\eta$.
For $\eta \geq \max\{1,\abs{E}, \normtwo{\Af}\}$, we have $\Lambda \prec \eta^{-2}$ by \eqref{eq:behaviour_lambda_large_eta}. 
By \eqref{eq:estimate_theta}, we have $\vartheta \gtrsim \eta^{-1}$ for $\eta \geq \max\{1,\abs{E}, \normtwo{\Af}\}$. 
Therefore, there is $\kappa>\kappa_7 +1$ such that $\Lambda (\eta) \leq \vartheta(\eta)$ a.w.o.p. for all $\eta \geq N^{\kappa}$. 
Together with \eqref{eq:gap_lemma_bound_hermitian}, this proves \eqref{eq:gap_lemma_no_char_function} for $\eta \geq N^{\kappa}$. 

The second step is a stochastic continuity argument to reduce $\eta$ for the domain of 
validity of  \eqref{eq:gap_lemma_no_char_function}. 
The estimate \eqref{eq:gap_lemma_bound_hermitian} asserts that $\Lambda$ cannot take on any value between $\varphi$ and $\vartheta$ 
with very high probability. Since $\eta \mapsto \Lambda(\eta)$ is continuous, $\Lambda$ remains bounded by $\varphi$ for all values of $\eta$ as long as $\varphi$ is smaller than $ \vartheta$. The precise formulation 
of this procedure is found e.g. in Lemma A.2 of \cite{Ajankirandommatrix} and we leave the straightforward 
check of its conditions to the reader.  
The bound \eqref{eq:gap_lemma_no_char_function} yields \eqref{eq:local_law_inside_entry} in the regime $|E|\le N^{\kappa_7+1}$. 

\emph{Proof of \eqref{eq:local_law_inside_average}}: We apply Lemma \ref{lem:avg_tech_local_law} with $\Phi \defeq \norm{\mf^{-1}}\varphi$.
The condition \eqref{eq:assumption_averaged_local_law} is satisfied by the definition of $\Phi$ and~\eqref{eq:gap_lemma_no_char_function}. 
Since $\Phi \lesssim \Psi$ it is easily checked that all terms on the right-hand side of 
\eqref{eq:local_law_average_general} are bounded by $\norm{\mf}\max\{N^{-1/2}, \Psi\}\Psi$.
Therefore, using \eqref{eq:Lf_bounded} and \eqref{eq:m_bound_aux1}, the averaged local law, \eqref{eq:local_law_average_general}, yields 
\begin{equation} \label{eq:average_local_law_aux1}
 \absB{\frac{1}{N} \sum_{i=1}^N c_i(G_{ii} -m_i )} \prec \norm{\mf}\max\Big\{ \frac{1}{\sqrt N},\Psi\Big\} \Psi 
\lesssim \frac{1}{1 + \disvz} \Big(  \frac{\norm{\Im \mf(z)}}{N\eta}+ \frac{1}{N} + \frac{1}{1 + \disvz[2]}\frac{1}{(N\eta)^2} \Big)
\end{equation}
for any $c_1, \ldots, c_N \in \C^{K\times K}$ such that $\max_i \abs{c_i} \leq 1$. Owing to $\norm{\Im\mf} \lesssim 1$ by \eqref{eq:M_two_bounded}, the bound \eqref{eq:local_law_inside_average} follows. 

We now turn to the proof of (ii) which  is divided into two steps. 
In the first step, we show Lemma \ref{lem:local_law_aux_bound}. Therefore, we will 
 follow the proof of \eqref{eq:average_local_law_aux1} with the bounds \eqref{eq:m_bound_aux1} and \eqref{eq:Lf_bounded} replaced by 
their weaker analogues \eqref{eq:m_bound_aux2} and \eqref{eq:Lf_bounded2} below
that deteriorate as $\disvz$ becomes small.
After having completed Lemma~\ref{lem:local_law_aux_bound}, we immediately get Theorem~\ref{thm:no_eigenvalues} 
via the proof given in Section \ref{subsec:local_law}. 
Finally, in the second step, proceeding similarly as in the proof of (i), the bounds \eqref{eq:local_law_outside_entry} and \eqref{eq:local_law_outside_average} will be obtained from Theorem~\ref{thm:no_eigenvalues}. 

\begin{proof}[Step 1: Proof of Lemma \ref{lem:local_law_aux_bound}]
We first give the replacements for the bounds \eqref{eq:m_bound_aux1} and \eqref{eq:Lf_bounded} 
that served as inputs for the previous proof of part (i).
The replacement for \eqref{eq:m_bound_aux1} is a direct consequence of \eqref{eq:Mf_upper_bound_eta}:
\begin{equation} \label{eq:m_bound_aux2}
 \norm{\mf} \leq \frac{1}{\disvz}.
\end{equation}
The replacement of \eqref{eq:Lf_bounded} is the bound 
\begin{equation} \label{eq:Lf_bounded2}
 \norm{\Lf^{-1}} + \norm{(\Lf^{-1})^*} \lesssim 1 + \frac{1}{\disvz[26]},
\end{equation}
 which is obtained by distinguishing the regimes $\normtwo{\Mf}^2 \norm{\sopb} > 1/2$ and $\normtwo{\Mf}^2 \norm{\sopb} \leq 1/2$. 
In the first regime, we conclude from \eqref{eq:lopb_bounded_sp} and \eqref{eq:lopb_bounded} that 
\[ \norm{\lopb^{-1}} + \norm{(\lopb^{-1})^*}\lesssim 1 + \normtwo{\Mf}^2 + \frac{\normtwo{\Mf}^9\normtwo{\Mf^{-1}}^9}{\normtwo{\Mf}^4\disvz[8]} \lesssim 1 + \frac{1}{\disvz[26]}, \]
where we used the lower bound on $\Mf$ given by the definition of the regime and $\norm{\sopb} \lesssim 1$ as well as the bound $\normtwo{\Mf}\normtwo{\Mf^{-1}}\lesssim1/ \disvz[2]$ that is proven as \eqref{new product} below.
In the second case, we use the simple bound 
$\norm{\lopb^{-1}} + \norm{(\lopb^{-1})^*} \leq 2/(1- \normtwo{\Mf}^2 \norm{\sopb}) \leq 4$. 
Thus, \eqref{eq:comparing_norms_of_Lf_and_lopb} yields \eqref{eq:Lf_bounded2}.

Next, we will check that the following weaker version of \eqref{eq:product_condition_mf} holds 
\begin{equation}\label{new product}
 \norm{\mf(z +\ii s )} \norm{\mf^{-1}(z +\ii s )} \lesssim 1+ \frac{1}{d^2_\rho(z+\ii s)} 
 \end{equation}
for all $z \in \Hboutone\cup\Hbouttwo$ and $s \geq 0$. This is straightforward for $z \in \Hboutone$ since 
in this case $|z|$, $\| \Af \|_2$ and $\supmeas$ all remain bounded (see \eqref{eq:support_V}), so similarly to \eqref{eq:mf_inverse_bounded}
we have $ \norm{\mf^{-1}(z +\ii s )}\lesssim 1+ s +  \norm{\mf(z +\ii s )}$. For $|s|\le C$
\eqref{new product} directly follows from \eqref{eq:m_bound_aux2}, while for large $s$ we have $\norm{\mf(z +\ii s )}\lesssim s^{-1}$
and $ \norm{\mf^{-1}(z +\ii s )}\lesssim s$, so \eqref{new product} also holds.

Suppose now that $z \in\Hbouttwo$. In this regime $z$ is far away from the
spectrum of $\Af$, so  by \eqref{eq:support_V} we know that $\dist( z + \ii s, \spec \bs{A})\sim \dist( z+ \ii s, \supmeas)\ge 1$.
This means that 
\begin{equation}\label{m1}
\norm{\mf (z+\ii s)}\lesssim \frac{1}{\dist( z+ \ii s, \supmeas)} \,\sim\, \frac{1}{\dist( z+ \ii s, \spec \bs{A})}\,=\, \frac{1}{\smin(\Af-(z+\ii s)\id)},
\end{equation}
 and  hence from the Dyson equation
\begin{equation}\label{m2}
   \Big\| \frac{1}{\mf(z+\ii s)}\Big\|\le \| \Af-(z+\ii s)\id\|_2  + \norm{\sopb} \lesssim  \| \Af-(z+\ii s)\id\|_2.
\end{equation}
Since $\Af$ is Hermitian, we have the bound
\begin{equation}\label{comp}
  \frac{\norm{\Af-(z+\ii s)\id}_2}{\smin(\Af-(z+\ii s)\id)}\le \frac{\norm{\Af-z\id}_2}{\smin(\Af-z\id)} \le\kappa_9
 \end{equation}
 for any $s\ge 0$, where the first inequality  comes from the spectral theorem and the
 second bound is from 
  the definition of $\Hbouttwo$.
Therefore $\smin(\Af-(z+\ii s)\id)\sim\norm{\Af-(z+\ii s)\id}_2 $, and 
thus  \eqref{new product} follows from \eqref{m1} and~\eqref{m2}.

Now we can complete Step 1 by following the proof of part (i) but using 
 \eqref{eq:m_bound_aux2}, \eqref{eq:Lf_bounded2}  and \eqref{new product}
 instead of \eqref{eq:m_bound_aux1}, \eqref{eq:Lf_bounded} and  \eqref{eq:product_condition_mf}, 
 respectively.  It is easy to see that only
 these three estimates on $\|\mf\|$, $\|\mf\|\|\mf^{-1}\|$ and $\|\Lf^{-1}\|$   were used as inputs in this argument.
 The  resulting  estimates are weaker by  multiplicative factors involving  certain power of $1+ 1/\disvz$. 
We thus obtain 
a version of \eqref{eq:average_local_law_aux1} for $\eta \geq N^{-1+\gamma}(1+\disvz[-p])$ with $(1+\disvz)^{-1}$ replaced by $\max\{1,\disvz[-P]\}$ for some explicit $p, P \in\N$. 
Thus, applying \eqref{eq:Im_Mf_upper_bound_away_support} to estimate $\Im \mf$ in \eqref{eq:average_local_law_aux1} instead of $\norm{\Im \mf} \lesssim 1$ and possibly increasing $P$
yields \eqref{eq:average_local_law_aux2}.
\end{proof}

\emph{Step 2:} Continuing the proof of part (ii) of Lemma~\ref{lem:local_law}, we draw two consequences from Theorem~\ref{thm:no_eigenvalues} and the fact that $\Gf$ is the Stieltjes transform 
of a positive semidefinite matrix-valued measure $V_{\Gf}$ supported on $\spec \Hf$ with $V_{\Gf}(\spec \Hf) = \id$. 
Let $\delta>0$ be chosen as in Theorem~\ref{thm:no_eigenvalues}.
Since the spectrum of $\Hf$ is contained in $\{ \omega \in\R \colon \dist(\omega,\supmeas) \leq N^{-\delta}\}$ a.w.o.p.
by Theorem~\ref{thm:no_eigenvalues}, we have 
 \[ \normtwo{\Gf} \lesssim \frac{1}{\disvz}, \qquad \Im \Gf \lesssim \frac{\eta}{\disvz[2]}\id   \]
a.w.o.p. for all $z\in\Hb$ satisfying $\disvz\geq N^{-\delta/2}$. 
Therefore, \eqref{eq:ward_identity} implies for all $z\in\Hb$ satisfying $\disvz\geq N^{-\delta/2}$ that 
\begin{equation} \label{eq:estimate_Lambda_w}
\Lambda_{\rm{hs}} + \Lambda_{\rm{w}} \prec \frac{1}{\disvz\sqrt{N}}.
\end{equation}

Since $\Mf$ is the Stieltjes transform of $V_\Mf$ defined in \eqref{eq:def_V_Mf} and $V_\Mf(\R) = \id$ and $\Gf$ is the Stieltjes transform of $V_\Gf$ 
we conclude that there is $\kappa>0$ such that 
\begin{equation} \label{eq:Gf_minus_Mf_large_z}
 \Lambda \lesssim \normtwo{\Gf-\Mf} \lesssim \abs{z}^{-2} 
\end{equation}
a.w.o.p.  uniformly for all $z\in\Hb$ satisfying $\abs{z} \geq N^{\kappa}$.
Here, we used that $\supp V_\Mf \subset \supmeas$ and hence $\diam(\supp V_\Mf) \lesssim N^{\kappa_7+1}$ by 
\eqref{eq:A_two_bounded} and \eqref{eq:support_V} as well as
 $\diam(\supp V_\Gf) \leq \diam(\spec \Hf) \lesssim N^{\kappa_7+1}$ a.w.o.p.
by Theorem~\ref{thm:no_eigenvalues}.  

Hence, owing to \eqref{eq:estimate_theta} and \eqref{eq:Gf_minus_Mf_large_z}, by possibly increasing $\kappa>0$, we can assume 
that $\Lambda\leq \vartheta$ a.w.o.p. for all $z\in\Hboutone\cup\Hbouttwo$ satisfying $\abs{z} \geq N^{\kappa}$. 
Thus, to estimate $\norm{\gf-\mf}$ we start from \eqref{Lambda gap bound} and use \eqref{eq:Lf_bounded2}, \eqref{eq:m_bound_aux2}, \eqref{eq:estimate_Lambda_w} 
and \eqref{eq:product_condition_mf} to obtain an explicit $P \in \N$ such that $\norm{\gf-\mf} \prec \norm{\mf} \max\{\disvz[-1], \disvz[-P]\} N^{-1/2}$ a.w.o.p. 
For the offdiagonal terms of $\Gf$, we apply \eqref{eq:estimate_Lambda_w} to \eqref{eq:initial_bound_offdiag}. 
This yields
\begin{equation} \label{eq:local_law_outside_aux}
  \Lambda \prec \norm{\mf}\max\bigg\{\frac{1}{\disvz},\frac{1}{\disvz[P]} \bigg\} \frac{1}{\sqrt{N}} 
\end{equation} 
for $z\in\Hboutone\cup\Hbouttwo$ satisfying $\abs{z} \geq N^{\kappa}$.
Employing the stochastic continuity argument from Lemma A.2 in \cite{Ajankirandommatrix} as before, we obtain \eqref{eq:local_law_outside_aux} for all $z \in\Hboutone\cup\Hbouttwo$ satisfying $\disvz \geq N^{-\delta/2}$. 
We use \eqref{eq:m_bound_aux2} in \eqref{eq:local_law_outside_aux}, replace $P$ by $P+1$ and $\delta$ by $\delta/2$. Thus, we have proven \eqref{eq:local_law_outside_entry} 
for all $z\in\Hboutone\cup\Hbouttwo$ satisfying $\disvz \geq N^{-\delta}$. 
 Notice that this argument covers the case $|E|\ge N^{\kappa_7+1}$ as well that was left open in Step 1. 

For the proof of \eqref{eq:local_law_outside_average}, we set $\Phi \defeq (\disvz\sqrt{N})^{-1}$ and apply Lemma \ref{lem:avg_tech_local_law}. 
Its assumption $\Lambda \prec \Phi/\norm{\mf^{-1}}$ is satisfied by~\eqref{eq:local_law_outside_aux} and \eqref{eq:product_condition_mf}. Using \eqref{eq:Lf_bounded2}, \eqref{eq:m_bound_aux2}, \eqref{eq:product_condition_mf}
 and \eqref{eq:estimate_Lambda_w}, this proves \eqref{eq:local_law_outside_average} and hence concludes the proof of Lemma~\ref{lem:local_law}.
\end{proof}

\providecommand{\bysame}{\leavevmode\hbox to3em{\hrulefill}\thinspace}
\providecommand{\MR}{\relax\ifhmode\unskip\space\fi MR }
\providecommand{\MRhref}[2]{%
  \href{http://www.ams.org/mathscinet-getitem?mr=#1}{#2}
}
\providecommand{\href}[2]{#2}

\end{document}